\newtheorem{thm}{Theorem}[section]
\newtheorem{lem}[thm]{Lemma}
\newcommand{\R}{\mathbb{R}}
\newcommand{\LL}{\mathcal{L}}
\newcommand{\E}{\mathcal{E}}
\newcommand{\Q}{\mathcal{Q}}
\journal{Applied Mathematics and Optimization}
\begin{document}

\begin{frontmatter}

%% Title, authors and addresses

%% use the tnoteref command within \title for footnotes;
%% use the tnotetext command for theassociated footnote;
%% use the fnref command within \author or \address for footnotes;
%% use the fntext command for theassociated footnote;
%% use the corref command within \author for corresponding author footnotes;
%% use the cortext command for theassociated footnote;
%% use the ead command for the email address,
%% and the form \ead[url] for the home page:
%% \title{Title\tnoteref{label1}}
%% \tnotetext[label1]{}
%% \author{Name\corref{cor1}\fnref{label2}}
%% \ead{email address}
%% \ead[url]{home page}
%% \fntext[label2]{}
%% \cortext[cor1]{}
%% \address{Address\fnref{label3}}
%% \fntext[label3]{}

\title{On the Stability for the Cauchy Problem of Timoshenko thermoelastic Systems with Past History: Cattaneo and Fourier Law}
%% use optional labels to link authors explicitly to addresses:
%% \author[label1,label2]{}
%% \address[label1]{}
%% \address[label2]{}

\author{Fernando A. Gallego}
\ead{fagallegor@unal.edu.co}
\address{Departamento de Matem\'aticas, Universidad Nacional de Colombia - Sede Manizales, (UNAL), Cra 27 No. 64-60, 170003, Manizales, Colombia.}

\author{Hugo D. Fern\'andez Sare}
\address{Departamento de Matem\'atica, Universidade Federal de Juiz de Fora, CEP 36036-900, Juiz de Fora, MG, Brazil.}
\ead{hugo.sare@ice.ufjf.br}

\begin{abstract}
In this paper, we investigate the decay properties of thermoelastic Timoshenko systems with past history in the whole space where the thermal effects are modeled by Cattaneo and Fourier laws. We establish rates of decay of order $(1+t)^{-\frac{1}{8}}$ for both systems, Timoshenko-Fourier and Timoshenko-Cattaneo, satisfying the regularity-loss type property. Moreover, for the Cattaneo case, we show that the decay rates depend on a new condition $\chi_{0,\tau}$ which has been recently introduced to study the asymptotic behavior of Timoshenko systems in bounded domains. We found that this number also plays an important role in unbounded situations, affecting the decay rate of the solution.
\end{abstract}

%%Graphical abstract
%\begin{graphicalabstract}
%\includegraphics{grabs}
%\end{graphicalabstract}

%%Research highlights
%\begin{highlights}
%\item Research highlight 1
%\item Research highlight 2
%\end{highlights}

\begin{keyword}
%% keywords here, in the form: keyword \sep keyword
%% PACS codes here, in the form: \PACS code \sep code
%% MSC codes here, in the form: \MSC code \sep code
%% or \MSC[2008] code \sep code (2000 is the default)
Decay rate \sep  Heat conduction \sep  Timoshenko system \sep Thermoelasticity \sep  Regularity loss phenomenon
\end{keyword}

\end{frontmatter}

%% \linenumbers

%% main text

\section{Introduction}

In the literature concerning Timoshenko systems, the stability nature  of solutions have a relationship with the wave speeds of propagation, essentially, when these speeds are equal or different. In this context, denoting by $\chi_0$ the difference of propagation's speed (see equation \eqref{controlnumber'}), we investigate how the decay rates of solutions of thermoelastic Timoshenko systems, depend of $\chi_0$, in particular when the thermal effects are given by Cattaneo and Fourier law, both with additional history terms. As we can see from the references, the constant $\chi_0$ will play an important role in the characterization of asymptotic behavior of solutions.

Recalling that Cattaneo's law is a hyperbolic heat model implying that the temperature has a finite speed of propagation, we can observe the impact of this heat model on the stability of Timoshenko systems. Being more specific, recently in \cite{fatori2014,santos2012}, the authors proved that Cattaneo's law modifies the stability number $\chi_0$ when the model is formulated in bounded domains. Since this hyperbolic model generates dissipative thermal effects weaker than the parabolic Fourier model, the authors introduce a new stability number $\chi_{0,\tau}$ (see equation \eqref{controlnumber}), which generalizes the previous one $\chi_0$ in the sense that, when $\tau = 0$, Cattaneo's law turns into the Fourier law and the conditions over the new number $\chi_{0,\tau}$ are
equivalent to the old stability number $\chi_0$. 

In this line of research, our goal in this paper is to investigate the relation between $\chi_{0,\tau}$ and $\chi_{0}$ with the decay rates of the solution of Timoshenko system posed in the whole real line. In fact, we consider the Cauchy problem of the Timoshenko system with the heat conduction described by the Cattaneo and Fourier law with a history term, given by
\begin{equation}\label{system0}
\left\lbrace\begin{tabular}{l l}
$\rho_1\varphi _{tt}-k \left ( \varphi_x - \psi \right )_x = 0$ & in $ (0,\infty) \times \R$, \\
$\rho_2\psi_{tt}-b\psi_{xx} +m\displaystyle \int_0^{\infty}g(s)\psi_{xx}(t-s,x)ds - k\left ( \varphi_x - \psi\right )+\delta \theta_{x} =0$ & in $ (0,\infty) \times \R,$ \\
$\rho_3\theta_{t} +q_x +\delta \psi_{xt}=0$ & in $ (0,\infty) \times \R,$\\
$\tau q_t+\beta q +\theta_x=0$ & in $ (0,\infty) \times \R.$
\end{tabular} \right.
\end{equation}
where $b, k, m, \delta, \beta, \rho_1, \rho_2, \rho_3, $ and $\tau$ are positive constants; $\psi(t,x)$ has been assigned on $(-\infty,0] \times \R$, with initial data
\begin{equation}\label{system01}
\begin{tabular}{l l l }
$\varphi(0,\cdot)=\varphi_0(\cdot)$,   & $\psi(s,\cdot)=\psi_0(s,\cdot)$,   & $\theta(0,\cdot)=\theta_0(\cdot),\quad \forall s\in(-\infty,0]$\\
$\varphi_t(0,\cdot)=\varphi_1(\cdot)$, & $\psi_t(0,\cdot)=\psi_1(\cdot)$, & $q(0,\cdot)=q_0(\cdot)$,
\end{tabular}
\end{equation}
with $\phi, \psi, \theta$ and $q$  denoting the transversal displacement, the rotation angle of the beam, the temperature and the heat flow, respectively.
%we regard $\tau$ as a parameter satisfying $\tau \in (0, 1]$???.
The integral term represents a history term with kernel $g$ satisfying the following hypotheses:
\begin{enumerate}
\item[$(H_1)$] $g(\cdot)$ is a non negative function.
\item[$(H_2)$] There exist positive constants $k_1$ and $k_2$, such that, $-k_1 g(s) \leq g'(s) \leq -k_2 g(s).$
\item[$(H_3)$] $a:=b-b_0>0$, where $b_0=\int_0^{\infty}g(s)ds$.
\end{enumerate}
For this system, in bounded domains $(0,L)$, see for instance \cite{fatori2014}; the associated stability number is given by
\begin{equation}\label{controlnumber}
\chi_{0,\tau}=\left( \tau-\frac{\rho_1}{\rho_3 k}\right)\left( \rho_2 -\frac{b \rho_1}{k}\right) - \frac{\tau \rho_1 \delta^2}{\rho_3 k}.
\end{equation}
As mentioned in several references, condition \eqref{controlnumber} is more mathematical than physical, because is not realistic to assume that the propagation speeds associated to system \eqref{system0} will satisfy condition \eqref{controlnumber}. Note that, when $\tau=0$, the system \eqref{system0} has a thermal effect given by the Fourier law ($q= -\widetilde{\beta}\theta_x$). Indeed, formally the Timoshenko-Cattaneo system \eqref{system0} is reduced to the following Timoshenko-Fourier system
\begin{equation}\label{system0'}
\left\lbrace\begin{tabular}{l l}
$\rho_1\varphi _{tt}-k \left ( \varphi_x - \psi \right )_x = 0$ & in $ (0,\infty) \times \R$, \\
$\rho_2\psi_{tt}-b\psi_{xx} +m\displaystyle \int_0^{\infty}g(s)\psi_{xx}(t-s,x)ds - k\left ( \varphi_x - \psi\right )+\delta \theta_{x} =0$ & in $ (0,\infty) \times \R,$ \\
$\rho_3\theta_{t} -\widetilde{\beta}\theta_{xx} +\delta \psi_{xt}=0$ & in $ (0,\infty) \times \R,$
\end{tabular} \right.
\end{equation}
with initial data
\begin{equation}\label{system01'}
\begin{tabular}{l l l }
$\varphi(0,\cdot)=\varphi_0(\cdot)$,   & $\psi(s,\cdot)=\psi_0(s,\cdot)$,   & $\theta(0,\cdot)=\theta_0(\cdot),\quad \forall s\in(-\infty,0]$,\\
$\varphi_t(0,\cdot)=\varphi_1(\cdot)$, & $\psi_t(0,\cdot)=\psi_1(\cdot)$, &
\end{tabular}
\end{equation}
and stability number given by
\begin{equation}\label{controlnumber'}
\chi_{0}= \rho_2 -\frac{b \rho_1}{k}.
\end{equation}
As we said previously, the main  purpose of this article is to investigate  the relationship between damping terms, the stability numbers $\chi_{0,\tau}, \chi_0$ and their influence on the decay rate of solutions of systems \eqref{system0}-\eqref{system01} and \eqref{system0'}-\eqref{system01'}, respectively.

Let us start by giving some references on Timoshenko systems. The original Timoshenko system was first introduced by Timoshenko \cite{timoshenko1921,timoshenko1922} and describes the vibration of a beam taking into account the transversal displacement and the rotational angle of the beam filaments. An initial boundary value problem associated to \eqref{system0} and \eqref{system0'} under hypotheses  $(H_1), (H_2), (H_3)$ was considered by Fern\'andez Sare and Racke in \cite{sare2009}. They prove that the energy of the solution for the Timoshenko-Cattaneo model with history does not decay exponentially as $t\rightarrow \infty$ if $\chi_0=0$, while for the Timoshenko-Fourier system the energy decays exponentially if and only if $\chi_0=0$.
This result has been recently improved by Fatori et al in \cite{fatori2014} where, for the Cattaneo's case, the exponential stability is obtained if and only if a new condition on the wave speeds of propagation is satisfied, i.e, the energy of solution of a IBVP Timoshenko-Cattaneo decay exponentially if and only if $\chi_{0,\tau}=0$, where $\chi_{0,\tau}$ is given by \eqref{controlnumber}. Furthermore, if $\chi_{0,\tau}\neq 0$, they prove that the energy decays polynomially with rate $t^{-\frac{1}{2}}$.

There are many other references on Timoshenko systems in  bounded domains with interesting results. In particular, the problem of stability for Timoshenko-type systems in bounded domains has received much attention in the last years, and quite a number of results concerning uniform and asymptotic decay of energy have been established, see for instance \cite{alabau2007, rivera2003, messaoudi2009, guesmia,  keddi2018,   mustafa2009, messaoudi2004,  said2009, rivera20031, rivera2008, sare2008, park2011, soufyane2003} and references therein. As a matter of fact, in bounded domains the proofs of stability results for Timoshenko systems are based on  Poincar\'e inequalities and boundary conditions of the systems.

In this paper we are specially interested in the unbounded situation: when the system is formulated in the whole space $\R$. This kind of problem has been considered in recent papers because it exhibits the \textit{regularity-loss phenomenon} that usually appears in the pure Cauchy problems; see for instance \cite{hosono2006,ide2008,racke2010,ueda2011} and references therein. Roughly speaking, a decay rate of solution is of regularity-loss type when it is obtained only by assuming some additional regularity on the initial conditions. In this direction, we can mentioned some recent results on stabilization of Cauchy Timoshenko systems. For instance, in Ide-Haramoto-Kawashima \cite{ide2008}, Ide-Kawashima \cite{kawashima2008} and Racke-Houari \cite{houari2013,racke2010}, the authors consider Timoshenko systems with normalized coefficients proving that the assumptions $b=1$ or $b\neq1$ play decisive roles in showing whether or not the decay estimates of solutions are of regularity-loss type.

For Cauchy problems associated to Timoshenko systems in thermoelasticity, as far as we know, the decay rate of solutions has been first
studied by Said-Houari and Kasimov in \cite{houari2012,said2013damping}. In particular, the authors proved in  \cite{said2013damping} that the Timoshenko system couppled with Cattaneo or Fourier law have the same rate of decay, this is, the solutions $W= (\varphi_t,\psi_t,a\psi_x,\varphi_x-\psi,\theta)^T$ decay with the rate:
\begin{equation*}
\|\partial_x^k W(t)\|_{L^2} \leq C(1+t)^{-\frac{1}{12}-\frac{k}{6}}\|W_0\|_{L^1}+Ce ^{-ct}\|\partial_x^{k+l}W_0\|_{L^2}
\end{equation*}
for $a=1$ and
\begin{equation*}
\|\partial_x^k W(t)\|_{L^2} \leq C(1+t)^{-\frac{1}{12}-\frac{k}{6}}\|W_0\|_{L^1}+C(1+t)^{-\frac{l}{2}}\|\partial_x^{k+l}W_0\|_{L^2}
\end{equation*}
for $a\neq 1$, $k=1,2,...,s-l$. In \cite{houari2012}, considering an additional frictional damping $\lambda \psi_t(x,t)$ in the second equation, they obtain the same decay estimates with optimal rates $(1 + t)^{-\frac{1}{4}-\frac{k}{2}}$. More recently, Khader and Said-Houari in \cite{khadersaid2016} studied the Cauchy problem for the Timoshenko system with the  Gurtin-Pipkin thermal law:
\begin{equation}\label{gurtinpinkin}
\left\lbrace\begin{tabular}{l l}
$\varphi _{tt}-\left ( \varphi_x - \psi \right )_x = 0$ & in $ (0,\infty) \times \R$, \\
$\psi_{tt}-a^2\psi_{xx}  - \left ( \varphi_x - \psi\right )+\delta \theta_{x} =0$ & in $ (0,\infty) \times \R,$ \\
$\theta_t-\frac{1}{\beta}\displaystyle \int_0^{\infty}g(s)\theta_{xx}(t-s,x)ds+ \delta \psi_{tx}=0$ & in $ (0,\infty) \times \R,$
\end{tabular} \right.
\end{equation}
where the memory kernel $g(s)$ is a convex summable function on $[0,\infty)$ with total mass equal to 1.  They proved that the rate of decay depends of the number $\alpha_g:=\left((g(0))^{-1}\beta -1\right)\left(1-a^2\right)-(g(0))^{-1}\delta^2\beta$, which also controls the rate in bounded domains, see \cite{pata2014}. Additionally, we can cite some recent papers studying more general beam models posed in the whole real line, \cite{gallego2017, khadersaid2017, houari2016, houari2016-2, houari2015, houari2014}.

The main goal of this paper is to investigate the decay rate of the Cauchy problems \eqref{system0} and \eqref{system0'}. We prove that the same number $\chi_{0,\tau}$ defined in \eqref{controlnumber}, which controls the behavior of the solution in bounded domains \cite{fatori2014}, also plays an important role in unbounded domains and affects the decay rate of solutions, see Theorems \ref{teocattaneo} and \ref{teofourier} below. More precisely, we show that the respective solutions of Timoshenko-Cattaneo and Timoshenko-Fourier with history term, are of regularity-loss type and decay slowly with the rate $(1+ t)^{-\frac{1}{8}}$  in the $L^2$-norm.  Our proofs are based on some estimates for the Fourier image of the solution, Plancherel Theorem,  as well as on a suitable linear combination of series associated to energy estimates. Here, the decay rate $(1 + t)^{-\frac{1}{8}}$ will be obtained by taking regular initial data $U_0 \in  H^s(\R)$, for some $s\in\R$. This regularity loss comes from the analysis of the Fourier image, $\hat{U}(\xi, t)$, of the solution $U(\xi, t)$. In fact, we will obtain the estimate %%(see \eqref{e23}, \eqref{eq23''} and \eqref{e40} below)
\begin{equation*}
\left | \hat{U}(\xi, t)\right|^2 \leq C e^{-\beta \rho(\xi)t}\left | \hat{U}(\xi, 0)\right|^2,
\end{equation*}
where $C, \beta$ are positive constants and
\begin{equation*}
\rho(\xi) = \begin{cases}
\dfrac{\xi^4}{\left(1+\xi^2\right)^3}, & \text{if $\chi_{0,\tau} = 0$ (resp, $\chi_{0} = 0$)}, \\
\\
\dfrac{\xi^4}{\left(1+\xi^2\right)^4}, &\text{if  $\chi_{0,\tau} \neq 0$ (resp, $\chi_{0} \neq 0$)}.
\end{cases}
\end{equation*}
As we will see, the decay estimates for Timoshenko-Cattaneo and Timoshenko-Fourier, depend on the properties of the function $\rho(\xi)$. In fact, this function $\rho(\xi)$ behaves like $\xi^4$ in the low frequency region $(|\xi| \leq 1)$ and like $\xi^{-2}$ near infinity, whenever $\chi_{0,\tau} = 0$ (resp, $\chi_{0} = 0$). Otherwise, if $\chi_{0,\tau} \neq 0$ (resp, $\chi_{0} \neq 0$), the function $\rho(\xi)$ behaves also like $\xi^4$ in the low frequency region but like $\xi^{-4}$ near infinity, which means that the dissipation in the hight frequency region is very weak and produces the \textit{regularity loss phenomenon}. It is known that this regularity loss causes some difficulties in the nonlinear cases, see for example \cite{ide2008, kawashima2008} for more details.

This paper is organized as follows. Section \ref{section2} is dedicated to state the problems. In section \ref{section3}, we will present the energy method in the Fourier space and the construction of the Lyapunov functionals. The main results, Theorems \ref{teocattaneo} and \ref{teofourier}  are formulated in Section \ref{section4}.

%----------------------------------------------------------------------------------------------------------------------
%%%%%%%%%%%%%%%%%%%%%%%%%%%%%%%%%%%%%%%%%%%%%%%%%%%%%%%%%%%%%%%%%%%%%%%%%%%%%%%%%%%%%%%%%%%%%%%%%%%%%%%%%%%%%%%%%%%%%%%
%----------------------------------------------------------------------------------------------------------------------

\section{Setting of the Problem}\label{section2}

In order to establish the decay rates of the Timoshenko systems \eqref{system0} and \eqref{system0'}, we have to transform the original problems to a first-order (in variable $t$) systems, defining new variables. Then, we apply the energy method in the Fourier space to prove some point wise estimates which will help in the proof of the decay estimates.

%%%%%%%%%%%%%%%%%%%%%%%%%%%%%%%%%%%%%%%%%%%%%%%%%%%%%%%%%%%%%%%%%%%%%%%%%%%%%%%%%%%%%%%%%
%%%%%%%%%%%%%%%%%%%%%%%%%%%%%%%%%%%%%%%%%%%%%%%%%%%%%%%%%%%%%%%%%%%%%%%%%%%%%%%%%%%%%%%%%

\subsection{The Cattaneo Model}\label{subsecC}

We consider the Timoshenko system with history and Cattaneo law. Using the change of variable, introduced in \cite{dafermos1970},
\begin{equation}\label{eta}
\eta(t,s,x):=\psi(t,x)-\psi(t-s,x),\qquad (t,x) \in  (0,\infty) \times \R, \quad s\geq 0,
\end{equation}
the system \eqref{system0}, can be rewritten as
\begin{equation}\label{system1}
\left\lbrace\begin{tabular}{l l}
$\rho_1\varphi _{tt}-k \left ( \varphi_x - \psi \right )_x = 0$ & in $ (0,\infty) \times \R$, \\
$\rho_2\psi_{tt}-a\psi_{xx} - m \displaystyle\int_0^{\infty}g(s)\eta_{xx}(s)ds - k\left ( \varphi_x - \psi\right )+\delta \theta_{x} =0$ & in $ (0,\infty) \times \R,$ \\
$\rho_3\theta_{t} +q_x +\delta \psi_{xt}=0$ & in $ (0,\infty) \times \R,$\\
$\tau q_t+\beta q +\theta_x=0$ & in $ (0,\infty) \times \R,$ \\
$\eta_t+\eta_s -\psi_t=0$ & in $ (0,\infty) \times \R,$ \\
$\eta(\cdot,0,\cdot)=0$ & in $ (0,\infty) \times \R,$
\end{tabular} \right.
\end{equation}
where $a = a(b,g)$ is a positive constant given by $(H_3)$ and operator $T\eta=-\eta_s$ is the usual operator defined in problems with history terms, see for instance \cite{sare2008, sare2009} and references therein. Here, the last two equations of system \eqref{system1} are obtained differentiating equation \eqref{eta}. We define also the initial data
\begin{gather*}
\begin{tabular}{l l l}
$\varphi(0,\cdot)=\varphi_0(\cdot)$,   & $\psi(0,\cdot)=\psi_0(\cdot)$,   & $\theta(0,\cdot)=\theta_0(\cdot)$, \\
$\varphi_t(0,\cdot)=\varphi_1(\cdot)$, & $\psi_t(0,\cdot)=\psi_1(\cdot)$, & $q(0,\cdot)=q_0(\cdot)$, \\
\end{tabular}\\
\eta (0,s,\cdot)= \psi_0(0,\cdot)-\psi_0(-s,\cdot).
\end{gather*}
Furthermore, we can rewrite the system \eqref{system1} by considering the following change of variables
\begin{equation*}
 u=\varphi_t,  \qquad  z=\psi_x,  \qquad y=\psi_t, \qquad v=\varphi_x-\psi.
\end{equation*}
Then, \eqref{system1}  takes the form
\begin{equation}\label{system2}
\left\lbrace \begin{tabular}{l}
$v_t-u_x+y=0$, \\
$\rho_1u_t-kv_x=0$, \\
$z_t-y_x =0$, \\
$\rho_2y_t -az_x-m\displaystyle\int_0^{\infty}g(s)\eta_{xx}(s)ds -kv+\delta \theta_x=0$,\\
$\rho_3\theta_t+q_x+\delta y_x=0$,\\
$\tau q_t +\beta q+\theta_x=0$, \\
$\eta_t+\eta_s-y=0$,\\
$\eta(\cdot,0,\cdot)=0$
\end{tabular}\right.
\end{equation}
Now, we define the solution of \eqref{system2} by the vector $U$, which is given by
\begin{equation*}
U(t,x)=(v,u,z,y,\theta,q,\eta)^{T}.
\end{equation*}
The initial condition can be written as
\begin{equation}\label{system20}
U_0(x)=U(0,x)=(v_0,u_0,z_0,y_0,\theta_0,q_0,\eta_0)^{T},
\end{equation}
where $u_0=\varphi_1,  z_0=\psi_{0,x}, y_0=\psi_1$, $v_0=\varphi_{0,x}-\psi_0$ and $\eta_0=\eta (0,s,\cdot)$ which is defined, as usual, in the history space $L^2_g(\R^+,H^1(\R))$, endowed with the norm
$$
||\eta||^2:=\int_{\R}\int_0^{\infty}g(s)|\eta_x(s)|^2dsdx.
$$

%%%%%%%%%%%%%%%%%%%%%%%%%%%%%%%%%%%%%%%%%%%%%%%%%%%%%%%%%%%%%%%%%%%%%%%%%%%%%%%%%%%%%%%%%%%%%%%%%%%
%%%%%%%%%%%%%%%%%%%%%%%%%%%%%%%%%%%%%%%%%%%%%%%%%%%%%%%%%%%%%%%%%%%%%%%%%%%%%%%%%%%%%%%%%%%%%%%%%%%

\subsection{The Fourier Model}\label{subsecF}

Similarly to Section \ref{subsecC}, we consider the Timoshenko system \eqref{system0} with history and the Fourier law, i.e, when $\tau = 0$.  Indeed, we can eliminate $q$ easily and obtain the following differential equation for $\theta$:
\begin{equation*}
\rho_3 \theta_t - \tilde{\beta} \theta_{xx} +\delta \psi _{xt} = 0,
\end{equation*}
where $\tilde{\beta}=\beta^{-1}>0$. Then, introducing $\eta$ as in the previous subsection, we have the differential equations
\begin{equation}\label{system3}
\left\lbrace\begin{tabular}{l l}
$\rho_1\varphi _{tt}-k \left ( \varphi_x - \psi \right )_x = 0$ & in $ (0,\infty) \times \R$, \\
$\rho_2\psi_{tt}-a\psi_{xx} - m \displaystyle\int_0^{\infty}g(s)\eta_{xx}(s)ds - k\left ( \varphi_x - \psi\right )+\delta \theta_{x} =0$ & in $ (0,\infty) \times \R,$ \\
$\rho_3\theta_{t} -\tilde{\beta}\theta_{xx} +\delta \psi_{xt}=0$ & in $ (0,\infty) \times \R,$\\
$\eta_t+\eta_s -\psi_t=0$ & in $ (0,\infty) \times \R.$ \\
$\eta(\cdot,0,\cdot)=0$ & in $ (0,\infty) \times \R.$
\end{tabular} \right.
\end{equation}
with initial data
\begin{equation*}
\begin{tabular}{l l l }
$\varphi(0,\cdot)=\varphi_0(\cdot)$,   & $\psi(0,\cdot)=\psi_0(\cdot)$,   & $\theta(0,\cdot)=\theta_0(\cdot)$, \\
$\varphi_t(0,\cdot)=\varphi_1(\cdot)$, & $\psi_t(0,\cdot)=\psi_1(\cdot)$, & $\eta (0,s,\cdot)= \psi(0,\cdot)-\psi(-s,\cdot)$.
\end{tabular}
\end{equation*}
As in the previous section, we can rewrite the system as a fist-order system, by defining the following variables
\begin{equation*}
 u=\varphi_t,  \qquad  z=\psi_x,  \qquad y=\psi_t, \qquad v=\varphi_x-\psi.
\end{equation*}
Then, \eqref{system3}  takes the form,
\begin{equation}\label{system4}
\left\lbrace \begin{tabular}{l}
$v_t-u_x+y=0$, \\
$\rho_1u_t-kv_x=0$, \\
$z_t-y_x =0$, \\
$\rho_2y_t -az_x-m\displaystyle\int_0^{\infty}g(s)\eta_{xx}(s)ds -kv+\delta \theta_x=0$,\\
$\rho_3\theta_t-\tilde{\beta}\theta_{xx}+\delta y_x=0$,\\
$\eta_t+\eta_s-y=0$.
\end{tabular}\right.
\end{equation}
We define the vector solution  $V$ of the system \eqref{system4}, as
\begin{equation*}
V(t,x)=(v,u,z,y,\theta,\eta)^{T}.
\end{equation*}
Thus, the initial condition can be written
\begin{equation}\label{system40}
V_0(x)=V(x,0)=(v_0,u_0,z_0,y_0,\theta_0,\eta_0)^{T},
\end{equation}
where $u_0=\varphi_1,  z_0=\psi_{0,x}, y_0=\psi_1$, $v_0=\varphi_{0,x}-\psi_0$ and $\eta_0=\eta (0,s,\cdot)$ defined in the history space given in the Cattaneo's version.

%----------------------------------------------------------------------------------------------------------------------
%%%%%%%%%%%%%%%%%%%%%%%%%%%%%%%%%%%%%%%%%%%%%%%%%%%%%%%%%%%%%%%%%%%%%%%%%%%%%%%%%%%%%%%%%%%%%%%%%%%%%%%%%%%%%%%%%%%%%%%
%----------------------------------------------------------------------------------------------------------------------
\section{The energy method in the frequency space}\label{section3}

This section is devoted to showing the relationship between the rate of decay of solutions and the new condition (see \cite{fatori2014})
\begin{equation*}
\chi_{0,\tau}=\left( \tau-\frac{\rho_1}{\rho_3 k}\right)\left( \rho_2 -\frac{b \rho_1}{k}\right) - \frac{\tau \rho_1 \delta^2}{\rho_3 k}.
\end{equation*}
For this reason, we will discuss two cases: the case where $\chi_{0,\tau}=0$ and the case where $\chi_{0,\tau}\not= 0$. Moreover, for the Timoshenko-Fourier model, i,e., when $\tau=0$, we consider the usual wave speeds propagation given by
\begin{equation*}
\chi_{0}= \rho_2 -\frac{b \rho_1}{k}.
\end{equation*}
In each case, we use a delicate energy method to build appropriate Lyapunov functionals in the Fourier space.

%%%%%%%%%%%%%%%%%%%%%%%%%%%%%%%%%%%%%%%%%%%%%%%%%%%%%%%%%%%%%%%%%%%%%%%%%%%%%%%%%%%%%%%%%%%%%%%%%%%%%%%%%%%%%%%%%%%
%%%%%%%%%%%%%%%%%%%%%%%%%%%%%%%%%%%%%%%%%%%%%%%%%%%%%%%%%%%%%%%%%%%%%%%%%%%%%%%%%%%%%%%%%%%%%%%%%%%%%%%%%%%%%%%%%%%

\subsection{The Timoshenko-Cattaneo Law}
We consider the Fourier image of the Timoshenko-Cattaneo model with history and we show that the heat damping induced by Cattaneo law and the past history are strong enough to stabilize the whole system. Thus, taking Fourier Transform in (\ref{system2}), we obtain the following integro-differential system:

\begin{align}
&\hat{v}_t-i\xi\hat{u}+\hat{y}=0, \label{e1}\\
&\rho_1\hat{u}_t-ik\xi\hat{v}=0, \label{e2}\\
&\hat{z}_t-i\xi\hat{y} =0, \label{e3}\\
&\rho_2\hat{y}_t -ia\xi\hat{z}+m\xi^2\displaystyle\int_0^{\infty}g(s)\hat{\eta}(s)ds -k\hat{v}+i\delta\xi\hat{\theta}=0,\label{e4}\\
&\rho_3\hat{\theta}_t+i\xi\hat{q}+i\delta\xi\hat{y}=0,\label{e5}\\
&\tau \hat{q}_t +\beta \hat{q}+i\xi\hat{\theta}=0, \label{e6}\\
&\hat{\eta}_t+\hat{\eta}_s-\hat{y}=0. \label{e7}
\end{align}
Here, the solution vector and initial data are given by $\hat{U}(\xi,t)=(\hat{v},\hat{u},\hat{z},\hat{y},\hat{\theta},\hat{q},\hat{\eta})^{T}$ and $\hat{U}(\xi,0)=\hat{U}_0(\xi)$, respectively. The energy functional associated to the above system is defined as:
\begin{equation}\label{energy1}
\hat{E}\left ( \xi,t\right) = \rho_1|\hat{u}|^{2}+\rho_2|\hat{y}|^{2}+\rho_3 |\hat{\theta}|^{2}+k|\hat{v} |^{2}+a|\hat{z} |^{2}+\tau|\hat{q}|^{2} +m\xi^2\int_{0}^{\infty}g(s)|\hat{\eta}(s)|^2ds.
\end{equation}
%%%%%%%%%%%%%%%%%%%%%%%%%%%%%%%%%%%%%%%%%%%%%%%%%%%%%%%%%%%%%%%%%%%%%%%%%%%
\begin{lem}\label{lem1}
The energy \eqref{energy1} satisfies the following estimate:
\begin{gather}\label{energydissip}
\frac{d}{dt}\hat{E}(\xi,t)\leq -2\beta |\hat{q}|^2 -k_1m\xi^2\int_0^{\infty}g(s)|\hat{\eta}(s)|^2ds,
\end{gather}
where the constant $k_1>0$ is given by $(H_2)$.
\end{lem}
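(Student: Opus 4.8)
The plan is to run the standard multiplier (energy) method directly on the Fourier system \eqref{e1}--\eqref{e7}. First I would multiply each equation by the complex conjugate of its natural variable, weighted so as to reproduce the coefficients appearing in \eqref{energy1}: namely, multiply \eqref{e2} by $\bar{\hat u}$, \eqref{e1} by $k\bar{\hat v}$, \eqref{e3} by $a\bar{\hat z}$, \eqref{e4} by $\bar{\hat y}$, \eqref{e5} by $\bar{\hat\theta}$, \eqref{e6} by $\bar{\hat q}$, and \eqref{e7} by $m\xi^2 g(s)\bar{\hat\eta}(s)$ integrated over $s\in(0,\infty)$. Taking real parts and using $\mathrm{Re}(f_t\bar f)=\tfrac12\frac{d}{dt}|f|^2$, each leading term contributes exactly half of the corresponding term in $\frac{d}{dt}\hat E$, so after summing and multiplying by $2$ the leading part reassembles $\frac{d}{dt}\hat E(\xi,t)$.

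The next step is to check that all coupling terms cancel. The contributions carrying a factor $i\xi$ pair off by skew-symmetry: the $\hat u$--$\hat v$ coupling between \eqref{e1} and \eqref{e2}, the $\hat y$--$\hat z$ coupling between \eqref{e3} and \eqref{e4}, the $\hat\theta$--$\hat y$ coupling between \eqref{e4} and \eqref{e5}, and the $\hat q$--$\hat\theta$ coupling between \eqref{e5} and \eqref{e6} each yield two purely imaginary quantities whose real parts are opposite. The zeroth-order couplings cancel as well: the term $k\hat y\bar{\hat v}$ from \eqref{e1} against $-k\hat v\bar{\hat y}$ from \eqref{e4}, and the memory coupling $m\xi^2\int_0^\infty g(s)\hat\eta(s)\,ds\,\bar{\hat y}$ from \eqref{e4} against the $-m\xi^2\int_0^\infty g(s)\hat y\,\bar{\hat\eta}(s)\,ds$ term from \eqref{e7}, using $\mathrm{Re}(\hat\eta\bar{\hat y})=\mathrm{Re}(\hat y\bar{\hat\eta})$. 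After these cancellations the only surviving non-time-derivative contributions are the damping term $2\beta|\hat q|^2$ from \eqref{e6} and the term generated by $\hat\eta_s$ in \eqref{e7}, so the summation yields the identity
\begin{equation*}
\frac{d}{dt}\hat E(\xi,t) + 2\beta|\hat q|^2 + m\xi^2\int_0^\infty g(s)\,\partial_s|\hat\eta(s)|^2\,ds = 0.
\end{equation*}

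It then remains to treat the history contribution $m\xi^2\int_0^\infty g(s)\,\partial_s|\hat\eta(s)|^2\,ds$. I would integrate by parts in $s$, discarding the boundary terms by invoking the constraint $\hat\eta(\cdot,0,\cdot)=0$ at $s=0$ and the integrability/decay of $g|\hat\eta|^2$ as $s\to\infty$; this converts the term into $-m\xi^2\int_0^\infty g'(s)|\hat\eta(s)|^2\,ds$. Finally, hypothesis $(H_2)$ bounds $g'$ pointwise above by a negative multiple of $g$, turning this into a genuinely dissipative quantity bounded above by $-k_1 m\xi^2\int_0^\infty g(s)|\hat\eta(s)|^2\,ds$, which together with $-2\beta|\hat q|^2$ gives the claimed estimate \eqref{energydissip}.

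The main obstacle I anticipate is the bookkeeping of the memory term: one must justify the integration by parts in $s$ (the vanishing of the boundary terms at both endpoints) and track the sign carefully so that $(H_2)$ is applied in the direction producing dissipation rather than growth. The cross-term cancellations, though numerous, are routine once the correct multipliers are fixed, so the memory contribution is the only place where a structural hypothesis and a nontrivial integration must be combined.
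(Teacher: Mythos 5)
Your proposal is correct and follows essentially the same route as the paper: the same weighted multipliers for \eqref{e1}--\eqref{e6}, the same pairing of the memory coupling in \eqref{e4} with the $\hat{\eta}$-equation \eqref{e7}, integration by parts in $s$ using $\hat{\eta}(\cdot,0,\cdot)=0$, and then hypothesis $(H_2)$ to convert $\int_0^\infty g'(s)|\hat{\eta}(s)|^2\,ds$ into a dissipative term. The only remark worth making is that the upper bound $g'(s)\leq -k_2 g(s)$ in $(H_2)$ is the one that produces dissipation, so the constant appearing in \eqref{energydissip} should strictly be $k_2$ rather than $k_1$ --- a labelling slip already present in the paper's statement that you have simply inherited.
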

%%%%%%%%%%%%%%%%%%%%%%%%%%%%%%%%%%%%%%%%%%%%%%%%%%%%%%%%%%%%%%%%%%%%%%%%%%%%%%%
\begin{proof}
Multiplying \eqref{e1} by $k\overline{\hat{v}}$,  \eqref{e2} by $\overline{\hat{u}}$, \eqref{e3}  by $a\overline{\hat{z}}$,  \eqref{e4}  by $\overline{\hat{y}}$, \eqref{e5} by $\overline{\hat{\theta}}$ and \eqref{e6} by $\overline{\hat{q}}$, adding and  taking real part, it follows that
\begin{equation}\label{e8}
\frac{1}{2}\frac{d}{dt}\left\lbrace \rho_1|\hat{u}|^{2}+\rho_2|\hat{y}|^{2}+\rho_3 |\hat{\theta}|^{2}+k|\hat{v} |^{2}+a|\hat{z} |^{2}+\tau|\hat{q}|^{2}\right\rbrace=-\beta |\hat{q}|^2 -Re\left(m \xi^2\int_0^{\infty} g(s)\eta(s)\overline{\hat{y}}ds\right).
\end{equation}
On the other hand, taking the conjugate of equation \eqref{e7}, multiplying the resulting equation by $g(s)\hat{\eta}(t,s,x)$ and integrating with respect to $s$, we obtain
\begin{equation}\label{e9}
Re\left(\int_0^{\infty}g(s)\hat{\eta}(s)\overline{\hat{y}}ds\right)=\frac{1}{2}\frac{d}{dt}\int_0^{\infty}g(s)|\hat{\eta}(s)|^2ds +\frac{1}{2}\int_0^{\infty}g(s)\frac{d}{ds}|\hat{\eta}(s)|^2ds.
\end{equation}
Integrating by parts the last term in the left hand side of \eqref{e9}, we have
\begin{equation*}
Re\left( \int_0^{\infty}g(s)\hat{\eta}(s)\overline{\hat{y}}ds\right)=\frac{1}{2}\frac{d}{dt}\int_0^{\infty}g(s)|\hat{\eta}(s)|^2ds -\frac{1}{2}\int_0^{\infty}g'(s)|\hat{\eta}(s)|^2ds.
\end{equation*}
Plugging the above equation in \eqref{e8}, it follows that
\begin{equation*}
\frac{d}{dt}\hat{E}(\xi,t)=-2\beta |\hat{q}|^2 +m\xi^2\int_0^{\infty}g'(s)|\eta(s)|^2ds.
\end{equation*}
Using $(H_2)$, we obtain \eqref{energydissip}.
\end{proof}
\vglue 0.3cm
\noindent With this energy dissipation in hands, the following questions arise:
\begin{center}
\textit{Does $\hat{E}(t) \rightarrow 0$ as $t \rightarrow \infty$? If it is the case, can we find the decay rate of $\hat{E}(t)$?}
\end{center}
The following Theorem provides a positive answer establishing the exponential decay of the integro-differential system \eqref{e1}-\eqref{e7}. This result  is a fundamental ingredient in the proof of our main results.
\begin{thm}\label{teo1}
Let
\begin{equation}\label{e21}
\chi_{0,\tau}=\left( \tau-\frac{\rho_1}{\rho_3 k}\right)\left( \rho_2 -\frac{b \rho_1}{k}\right) - \frac{\tau \rho_1 \delta^2}{\rho_3 k}.
\end{equation}
Then, for any $t \geq 0$ and $\xi \in \R$, the energy of system \eqref{e1}-\eqref{e7} satisfies
\begin{equation}\label{e22}
\hat{E}(\xi,t) \leq C e^{-\lambda \rho(\xi)}\hat{E}(0,\xi),
\end{equation}
where $C, \lambda$ are positive constants and the function $\rho(\cdot)$ is given by
\begin{equation}\label{e23}
\rho(\xi) = \begin{cases}
\dfrac{\xi^4}{(1+\xi^2)^{3}} & \text{if $\chi_{0,\tau}=0$}, \\
\\
\dfrac{\xi^4}{(1+\xi^2)^{4}} & \text{if $\chi_{0,\tau}\neq 0$}.
\end{cases}
\end{equation}
\end{thm}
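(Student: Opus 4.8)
The plan is to run the \emph{energy method in the Fourier space}. Lemma~\ref{lem1} already furnishes dissipation for $|\hat q|^2$ and for the history term $m\xi^2\int_0^{\infty}g(s)|\hat\eta(s)|^2\,ds$, but gives no control over the mechanical and thermal components $\hat v,\hat u,\hat z,\hat y,\hat\theta$. First I would construct a finite family of auxiliary functionals $F_1,\dots,F_n$, each obtained by multiplying a pair of the equations \eqref{e1}--\eqref{e7} by a suitable conjugated unknown and taking real parts, so that $\tfrac{d}{dt}F_j$ recovers dissipation in one of the missing components. I would then assemble them into a single Lyapunov functional
\begin{equation*}
\LL(\xi,t)=N\,\hat E(\xi,t)+\sum_{j}\frac{N_j}{(1+\xi^2)^{m_j}}\,F_j(\xi,t),
\end{equation*}
with a hierarchy of large constants $N\gg N_1\gg\cdots\gg N_n\gg1$ and frequency weights $(1+\xi^2)^{-m_j}$ fixed below, chosen so that simultaneously (i) $\LL$ stays equivalent to the energy, $c_1\hat E\le\LL\le c_2\hat E$ uniformly in $\xi$, and (ii) $\tfrac{d}{dt}\LL(\xi,t)\le -c\,\rho(\xi)\,\hat E(\xi,t)$. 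By (i) this last inequality becomes $\tfrac{d}{dt}\LL\le -c'\rho(\xi)\LL$, and integrating the resulting linear differential inequality yields \eqref{e22}.

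The functionals are the standard ones for thermoelastic Timoshenko systems. A term of the form $F_1=\mathrm{Re}\big(\hat q\,\overline{i\xi\hat\theta}\big)$, built from \eqref{e5}--\eqref{e6}, produces the second thermal dissipation $-c\,\xi^2|\hat\theta|^2$, at the cost of a ``bad'' term $\sim\xi^2|\hat q|^2$; this is precisely why $F_1$ must carry the weight $(1+\xi^2)^{-1}$, so that $\xi^2(1+\xi^2)^{-1}|\hat q|^2$ is absorbed by the $|\hat q|^2$ already available from Lemma~\ref{lem1}. The heat coupling of strength $\delta$ between \eqref{e4} and \eqref{e5} then yields a functional controlling $\xi^2|\hat y|^2$. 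The genuinely Timoshenko part requires coupling $\hat u$ and $\hat v$ through \eqref{e1}--\eqref{e2} to extract $|\hat v|^2$, coupling $\hat y$ and $\hat z$ through \eqref{e3}--\eqref{e4} to extract $\xi^2|\hat z|^2$, and a $\mathrm{Re}(\hat u\,\overline{\hat z})$-type term to recover $\xi^2|\hat u|^2$; throughout, the memory integral $\int_0^{\infty}g(s)\hat\eta(s)\,ds$ is dominated by the dissipation of $\xi^2\int g|\hat\eta|^2$ via $(H_2)$--$(H_3)$ and Cauchy--Schwarz. Each step trades a target dissipation for higher-order-in-$\xi$ remainders that are tamed by the weights, and it is this trade-off that produces the loss of derivatives in the high-frequency regime.

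The number $\chi_{0,\tau}$ enters exactly at the stage where the two wave operators are compared. In closing the estimate for the coupled pair associated with $\hat u,\hat z$ (equivalently, the mismatch $k/\rho_1$ versus $b/\rho_2$, corrected by the thermal relaxation $\tau$ and the coupling $\delta$), one critical remainder appears whose coefficient is exactly $\chi_{0,\tau}$ as in \eqref{e21}. When $\chi_{0,\tau}=0$ this remainder cancels and the corresponding functional closes with one fewer power of $(1+\xi^2)$, so the total weight degrades only to $(1+\xi^2)^{-3}$; when $\chi_{0,\tau}\neq0$ the remainder survives and must be absorbed at the price of an extra weight, forcing $(1+\xi^2)^{-4}$. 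In both cases the chain of couplings costs two extra powers of $\xi$ in the low-frequency region, which is the origin of the $\xi^4$ in the numerator of $\rho(\xi)$; combining numerator and denominator reproduces \eqref{e23} and the behaviors $\rho(\xi)\sim\xi^4$ as $\xi\to0$ and $\rho(\xi)\sim\xi^{-2}$ (resp.\ $\xi^{-4}$) as $|\xi|\to\infty$.

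The main obstacle is the high-frequency bookkeeping: one must fix the exponents $m_j$ and the hierarchy $N\gg N_j$ so that \emph{every} remainder is absorbed while the equivalence $\LL\sim\hat E$ is preserved uniformly in $\xi$. The decisive point is the single term carrying the factor $\chi_{0,\tau}$, since only its vanishing permits closing with the stronger denominator $(1+\xi^2)^{3}$; treating the two cases separately, and verifying that the weighted sum of the $F_j$ truly dominates $\rho(\xi)\hat E$ pointwise rather than merely at the endpoints $\xi\to0$ and $|\xi|\to\infty$, is where the delicate work lies. Once the pointwise inequality $\tfrac{d}{dt}\LL\le -c\,\rho(\xi)\LL$ is secured, \eqref{e22} follows at once by integration.
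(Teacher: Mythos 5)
Your plan is essentially the paper's own proof: the same energy method in Fourier space, the same family of cross-term functionals (a $\mathrm{Re}(i\xi\hat\theta\overline{\hat q})$ term for $\xi^2|\hat\theta|^2$, the $\hat u$--$\hat v$ and $\hat y$--$\hat z$ couplings, a memory functional for $\xi^2|\hat y|^2$, and the combination in which the single remainder $\chi_{0,\tau}\,\mathrm{Re}(i\xi\overline{\hat u}\hat y)$ decides whether one extra power of $(1+\xi^2)$ is lost), followed by absorption of the $|\hat q|^2$ and $\int g|\hat\eta|^2$ remainders via Lemma~\ref{lem1} and Gronwall. The only (cosmetic) difference is normalization: you keep $\LL\sim\hat E$ by down-weighting the auxiliary functionals with $(1+\xi^2)^{-m_j}$, whereas the paper up-weights the energy, taking $\LL=\LL_1+N(1+\xi^2)^2\hat E$ so that $\LL\sim(1+\xi^2)^2\hat E$; both yield the same $\rho(\xi)$.
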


Following the ideas contain in  \cite{said2013damping}, we construct some functionals to capture the dissipation of all the components of the vector solution. These functionals allow us to build an appropriate Lyapunov functional equivalent to the energy. The proof of Theorem \ref{teo1} is based on the following lemmas:

%%%%%%%%%%%%%%%%%%%%%%%%%%%%%%%%%%%%%%%%%%%%%%%%%%%%%%%%%%%%%%%%%%%%%%%%%%%%%%
\begin{lem}\label{lem2}
Consider the functional
\begin{multline*}
J_1(\xi,t)=-\tau\rho_2Re\left(\hat{v}\overline{\hat{y}}\right)-\frac{a\tau\rho_1}{k}Re\left(\hat{z}\overline{\hat{u}}\right)+\frac{\delta\rho_1}{k}\left(\tau + \frac{1}{\delta^2}\left(\rho_2-\frac{b\rho_1}{k}+\tau b_0\rho_3\right)\right)Re(\hat{\theta}\overline{\hat{u}}) \\
 - \frac{\tau}{\delta}\left(\rho_2-\frac{b\rho_1}{k}+\tau b_0\rho_3\right) Re(\hat{v}\overline{\hat{q}})
\end{multline*}
Then, for any $\varepsilon>0$, $J_1$ satisfies
\begin{multline}\label{n5}
\frac{d}{dt}J_1(\xi,t)  +\tau k(1-\varepsilon)|\hat{v}|^2   \leq
\tau \rho_2|\hat{y}|^2 +C(\varepsilon)\xi^4\left|\int_0^{\infty}g(s)\hat{\eta}(s)ds\right|^2 \\
+\chi_{0,\tau} Re\left(i\xi\overline{\hat{u}}\hat{y}\right)
+\frac{1}{\delta}\left(  \chi_{0,\tau}+\tau b_0\rho_3\left(\tau -\frac{\rho_1}{\rho_3k}\right) \right) Re(i\xi\hat{q}\overline{\hat{u}}) \\
+\frac{\tau}{\delta}\left(\rho_2-\frac{b\rho_1}{k} + \tau b_0 \rho_3\right) Re(\hat{y}\overline{\hat{q}}) +C(\varepsilon)|\hat{q}|^2.
\end{multline}
where $C(\varepsilon)$ is a positive constant and $\chi_{0,\tau}$ is given by \eqref{propagation}.
\end{lem}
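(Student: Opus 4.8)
The plan is to differentiate $J_1$ term by term and replace every time derivative using the system \eqref{e1}--\eqref{e7}, then collect the resulting products according to their type. The four coefficients appearing in $J_1$ are reverse-engineered precisely so that the unwanted cross-terms cancel and only the dissipative term in $|\hat v|^2$, together with the terms listed on the right-hand side of \eqref{n5}, survive. Throughout I will use the elementary identities $Re(\hat a\,\overline{\hat b})=Re(\hat b\,\overline{\hat a})$ and $Re(i\hat a\,\overline{\hat b})=-Re(i\hat b\,\overline{\hat a})$, and I will abbreviate $\kappa:=\rho_2-\frac{b\rho_1}{k}+\tau b_0\rho_3=\chi_0+\tau b_0\rho_3$.

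First I would differentiate $-\tau\rho_2 Re(\hat v\,\overline{\hat y})$. Substituting $\hat v_t=i\xi\hat u-\hat y$ and $\rho_2\hat y_t=ia\xi\hat z-m\xi^2\int_0^\infty g(s)\hat\eta(s)\,ds+k\hat v-i\delta\xi\hat\theta$, this single term already produces the crucial dissipation $-\tau k|\hat v|^2$ (to be moved to the left as $+\tau k|\hat v|^2$), the good term $+\tau\rho_2|\hat y|^2$, a memory product $+\tau m\xi^2 Re(\hat v\,\overline{\int_0^\infty g(s)\hat\eta(s)\,ds})$, and cross-terms in $i\xi\hat u\,\overline{\hat y}$, $i\xi\hat v\,\overline{\hat z}$ and $i\xi\hat v\,\overline{\hat\theta}$. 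The second functional $-\frac{a\tau\rho_1}{k}Re(\hat z\,\overline{\hat u})$, once differentiated via $\hat z_t=i\xi\hat y$ and $\rho_1\hat u_t=ik\xi\hat v$, is chosen exactly so that its $i\xi\hat z\,\overline{\hat v}$ contribution cancels the $\hat z$-cross-term above; it also feeds a $\frac{a\tau\rho_1}{k}Re(i\xi\hat u\,\overline{\hat y})$ contribution into the $\hat u\hat y$ bookkeeping. Similarly, the $Re(\hat\theta\,\overline{\hat u})$ and $Re(\hat v\,\overline{\hat q})$ functionals, after using \eqref{e5}, \eqref{e6} and \eqref{e2}, are calibrated so that their $\hat v\,\overline{\hat\theta}$ contributions cancel the remaining $\hat\theta$-cross-term, leaving no $\hat z$ or $\hat\theta$ products at all.

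The heart of the argument is the collection of the two remaining families of cross-terms. Adding the $i\xi\hat u\,\overline{\hat y}$ contributions from the first, second and third functionals, the coefficient simplifies — after inserting $b=a+b_0$ from $(H_3)$ and the definition \eqref{e21} — to exactly $-\chi_{0,\tau}$ on $Re(i\xi\hat u\,\overline{\hat y})$, i.e. $\chi_{0,\tau}Re(i\xi\,\overline{\hat u}\hat y)$. In the same way, combining the $i\xi\hat q\,\overline{\hat u}$ contributions of the third and fourth functionals and using $\kappa(\tau-\frac{\rho_1}{\rho_3 k})=\chi_{0,\tau}+\frac{\tau\rho_1\delta^2}{\rho_3 k}+\tau b_0\rho_3(\tau-\frac{\rho_1}{\rho_3 k})$ together with the cancellation of the $\frac{\tau\rho_1\delta^2}{\rho_3 k}$ terms, the coefficient collapses to $\frac{1}{\delta}(\chi_{0,\tau}+\tau b_0\rho_3(\tau-\frac{\rho_1}{\rho_3 k}))$. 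The fourth functional moreover yields directly the advertised term $\frac{\tau}{\delta}\kappa\,Re(\hat y\,\overline{\hat q})$ from the $\hat q_t$ substitution.

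Finally, two products remain to be controlled rather than cancelled: the memory term $\tau m\xi^2 Re(\hat v\,\overline{\int_0^\infty g(s)\hat\eta(s)\,ds})$ and the damping term $\frac{\beta\kappa}{\delta}Re(\hat v\,\overline{\hat q})$ produced by the $-\frac{\beta}{\tau}\hat q$ part of \eqref{e6}. For each I would apply Young's inequality, sending a small multiple $\varepsilon\,\tau k|\hat v|^2$ back to the left-hand side and absorbing the conjugate factors into $C(\varepsilon)\xi^4|\int_0^\infty g(s)\hat\eta(s)\,ds|^2$ and $C(\varepsilon)|\hat q|^2$ respectively. This turns the coefficient of $|\hat v|^2$ on the left into $\tau k(1-\varepsilon)$ and reproduces exactly the right-hand side of \eqref{n5}. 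I expect the main obstacle to be purely the bookkeeping: one must track a dozen signed products through the conjugation identities and verify that the $\hat z$- and $\hat\theta$-cross-terms vanish identically while the two surviving coefficients reduce to $\chi_{0,\tau}$ and its companion — these exact cancellations are precisely what force the specific, somewhat unwieldy, coefficients in the definition of $J_1$.
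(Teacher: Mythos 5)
Your proposal is correct and follows essentially the same route as the paper: differentiate $J_1$, substitute the system \eqref{e1}--\eqref{e7}, let the designed coefficients cancel the $\hat z$- and $\hat\theta$-cross-terms, collapse the $Re(i\xi\overline{\hat u}\hat y)$ and $Re(i\xi\hat q\overline{\hat u})$ coefficients to $\chi_{0,\tau}$ and $\frac{1}{\delta}\bigl(\chi_{0,\tau}+\tau b_0\rho_3(\tau-\frac{\rho_1}{\rho_3 k})\bigr)$ via $b=a+b_0$, and finish with Young's inequality on the memory and $Re(\hat q\overline{\hat v})$ products. The only (inconsequential) slip is attributing the $\frac{\tau}{\delta}\kappa\,Re(\hat y\overline{\hat q})$ term to the $\hat q_t$ substitution, when it actually comes from the $-\hat y$ part of $\hat v_t$ in $\frac{d}{dt}Re(\hat v\overline{\hat q})$.
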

%%%%%%%%%%%%%%%%%%%%%%%%%%%%%%%%%%%%%%%%%%%%%%%%%%%%%%%%%%%%%%%%%%%%%%%%%%%%%%%%%
\begin{proof}
Multiplying \eqref{e1} by $-\rho_2 \overline{\hat{y}}$ and taking real part, we obtain
\begin{align*}
-\rho_2Re\left(\hat{v}_t\overline{\hat{y}}\right) + \rho_2Re\left(i\xi\hat{u}\overline{\hat{y}}\right)-\rho_2|\hat{y}|^2=0.
\end{align*}
Multiplying \eqref{e4} by $- \overline{\hat{v}}$ and taking real part, it follows that
\begin{align*}
-\rho_2Re\left(\hat{y}_t\overline{\hat{v}}\right) + aRe\left(i\xi\hat{z}\overline{\hat{v}}\right)+k|\hat{v}|^2-Re\left(m\xi^2\overline{\hat{v}}\int_0^{\infty}g(s)\hat{\eta}(s)ds\right)-\delta Re\left(i\xi\hat{\theta}\overline{\hat{v}}\right)=0.
\end{align*}
Adding the above identities,
\begin{multline}\label{e11}
-\rho_2\frac{d}{dt}Re\left(\hat{v}\overline{\hat{y}}\right)+k|\hat{v}|^2=\rho_2|\hat{y}|^2 - aRe\left(i\xi\hat{z}\overline{\hat{v}}\right)
+Re\left(m\xi^2\overline{\hat{v}}\int_0^{\infty}g(s)\hat{\eta}(s)ds\right) -\rho_2Re\left(i\xi\hat{u}\overline{\hat{y}}\right)+\delta Re\left(i\xi\hat{\theta}\overline{\hat{v}}\right).
\end{multline}
On the other hand, multiplying \eqref{e2} by $-\frac{a}{k}\overline{\hat{z}}$, \eqref{e3} by $-\frac{a\rho_1}{k}\overline{\hat{u}}$, adding the results and taking real part, it follows that
\begin{equation}\label{e12}
-\frac{a\rho_1}{k}\frac{d}{dt}Re\left(\hat{z}\overline{\hat{u}}\right) = -\frac{a\rho_1}{k}Re\left(i\xi\hat{y}\overline{\hat{u}}\right)-aRe\left(i\xi\hat{v}\overline{\hat{z}}\right).
\end{equation}
Moreover, multiplying \eqref{e2} by $\frac{\delta}{k}\overline{\hat{\theta}}$ and taking real part,
\begin{equation*}
\frac{\delta\rho_1}{k}Re(\hat{u}_t\overline{\hat{\theta}})-\delta Re(i\xi\hat{v}\overline{\hat{\theta}})=0.
\end{equation*}
Next, multiplying \eqref{e5} by $\frac{\delta\rho_1}{\rho_3 k}\overline{\hat{u}}$ and taking real part,
\begin{equation*}
\frac{\delta\rho_1}{k}Re(\hat{\theta}_t\overline{\hat{u}})+\frac{\delta\rho_1}{\rho_3 k}Re(i\xi\hat{q}\overline{\hat{u}})+\frac{\delta^2\rho_1}{\rho_3 k}Re(i\xi\hat{y}\overline{\hat{u}})=0.
\end{equation*}
Adding the above identities, we obtain
\begin{equation}\label{n1}
\frac{\delta\rho_1}{k}\frac{d}{dt}Re(\hat{\theta}\overline{\hat{u}})= -\frac{\delta\rho_1}{\rho_3 k}Re(i\xi\hat{q}\overline{\hat{u}})-\frac{\delta^2\rho_1}{\rho_3 k}Re(i\xi\hat{y}\overline{\hat{u}})+\delta Re(i\xi\hat{v}\overline{\hat{\theta}}).
\end{equation}
Furthermore, multiplying \eqref{e6} by $-\overline{\hat{v}}$ and taking real part,
\begin{equation*}
-\tau Re(\hat{q}_t\overline{\hat{v}})-\beta Re(\hat{q}\overline{\hat{v}})-Re(i\xi\hat{\theta}\overline{\hat{v}})=0.
\end{equation*}
Multiplying, \eqref{e1} by $-\tau \overline{\hat{q}}$ and taking real part,
\begin{equation*}
-\tau Re(\hat{v}_t\overline{\hat{q}})+\tau Re(i\xi\hat{u}\overline{\hat{q}})-\tau Re(\hat{y}\overline{\hat{q}})=0.
\end{equation*}
Adding the above identities, it follows that
\begin{equation}\label{n2}
- \tau\frac{d}{dt} Re(\hat{v}\overline{\hat{q}})=-\tau Re(i\xi\hat{u}\overline{\hat{q}})+\tau Re(\hat{y}\overline{\hat{q}})
+\beta Re(\hat{q}\overline{\hat{v}})+Re(i\xi\hat{\theta}\overline{\hat{v}}).
\end{equation}
Now, computing \eqref{e11}+\eqref{e12}+\eqref{n1}, we have
\begin{multline}\label{n3}
\frac{d}{dt}\left\lbrace -\rho_2Re\left(\hat{v}\overline{\hat{y}}\right)-\frac{a\rho_1}{k}Re\left(\hat{z}\overline{\hat{u}}\right) +\frac{\delta\rho_1}{k}Re(\hat{\theta}\overline{\hat{u}})\right\rbrace  +k|\hat{v}|^2
=\rho_2|\hat{y}|^2 +Re\left(m\xi^2\overline{\hat{v}}\int_0^{\infty}g(s)\hat{\eta}(s)ds\right)  \\
+\left(\rho_2 - \frac{a\rho_1}{k}-\frac{\delta^2\rho_1}{\rho_3 k}\right) Re\left(i\xi\overline{\hat{u}}\hat{y}\right) - \frac{\delta\rho_1}{\rho_3 k}Re(i\xi\hat{q}\overline{\hat{u}}).
\end{multline}
Computing $\tau \eqref{e11} + \tau\eqref{e12} + \underbrace{\left(\tau + \frac{1}{\delta^2} \left( \rho_2-\frac{b\rho_1}{k} +\tau b_0 \rho_3\right)\right)}_{\Gamma}\eqref{n1}$, we find that
\begin{multline*}
\frac{d}{dt}\left\lbrace -\tau\rho_2Re\left(\hat{v}\overline{\hat{y}}\right)-\frac{a\tau\rho_1}{k}Re\left(\hat{z}\overline{\hat{u}}\right)+ \frac{\Gamma\delta\rho_1}{k}Re(\hat{\theta}\overline{\hat{u}})\right\rbrace  +\tau k|\hat{v}|^2
= \tau \rho_2|\hat{y}|^2 + \tau m Re\left(\xi^2\overline{\hat{v}}\int_0^{\infty}g(s)\hat{\eta}(s)ds\right)  \\
+ \left[\tau\left(\rho_2 - \frac{a\rho_1}{k}\right)-\frac{\delta^2\rho_1}{\rho_3 k} \Gamma \right] Re\left(i\xi\overline{\hat{u}}\hat{y}\right)
+\tau \delta Re(i\xi\hat{\theta}\overline{\hat{v}})
- \frac{\delta\rho_1}{\rho_3 k} \Gamma Re(i\xi\hat{q}\overline{\hat{u}}) + \Gamma \delta Re(i\xi\overline{\hat{\theta}}\hat{v}).
\end{multline*}
Hence,
\begin{multline}\label{n4}
\frac{d}{dt}\left\lbrace -\tau\rho_2Re\left(\hat{v}\overline{\hat{y}}\right)-\frac{a\tau\rho_1}{k}Re\left(\hat{z}\overline{\hat{u}}\right)+ \frac{\Gamma\delta\rho_1}{k}Re(\hat{\theta}\overline{\hat{u}})\right\rbrace  +\tau k|\hat{v}|^2
= \tau \rho_2|\hat{y}|^2 + \tau m Re\left(\xi^2\overline{\hat{v}}\int_0^{\infty}g(s)\hat{\eta}(s)ds\right)  \\
+\left[\left(\tau-\frac{\rho_1}{\rho_3 k}\right)\left(\rho_2 - \frac{b\rho_1}{k}\right)-\frac{\delta^2\rho_1}{\rho_3 k} \tau \right] Re\left(i\xi\overline{\hat{u}}\hat{y}\right)
+\delta\left(\tau -\Gamma \right) Re(i\xi\hat{\theta}\overline{\hat{v}})
- \frac{\delta\rho_1}{\rho_3 k}\Gamma Re(i\xi\hat{q}\overline{\hat{u}}).
\end{multline}
Multiplying \eqref{n2} by $\Gamma_1=- \delta(\tau -\Gamma)$ and adding the result to \eqref{n4}, it follows that
\begin{multline*}
\frac{d}{dt}\left\lbrace -\tau\rho_2Re\left(\hat{v}\overline{\hat{y}}\right)-\frac{a\tau\rho_1}{k}Re\left(\hat{z}\overline{\hat{u}}\right) +\frac{\Gamma\delta\rho_1}{k}Re(\hat{\theta}\overline{\hat{u}}) - \tau \Gamma_1 Re(\hat{v}\overline{\hat{q}})\right\rbrace  +\tau k|\hat{v}|^2 \\
= \tau \rho_2|\hat{y}|^2 + \tau m Re\left(\xi^2\overline{\hat{v}}\int_0^{\infty}g(s)\hat{\eta}(s)ds\right)
+\left[\left(\tau-\frac{\rho_1}{\rho_3 k}\right)\left(\rho_2 - \frac{a\rho_1}{k}\right)-\frac{\delta^2\rho_1}{\rho_3 k} \tau \right] Re\left(i\xi\overline{\hat{u}}\hat{y}\right)
\\
+\delta\left(\tau -\Gamma \right) Re(i\xi\hat{\theta}\overline{\hat{v}})
- \frac{\delta\rho_1}{\rho_3 k} \Gamma Re(i\xi\hat{q}\overline{\hat{u}}) -\Gamma_1\tau Re(i\xi\hat{u}\overline{\hat{q}})+\Gamma_1\tau Re(\hat{y}\overline{\hat{q}})
\\
+\Gamma_1\beta Re(\hat{q}\overline{\hat{v}})+\Gamma_1 Re(i\xi\hat{\theta}\overline{\hat{v}}).
\end{multline*}
Hence,
\begin{multline*}
\frac{d}{dt}J_1(\xi,t)  +\tau k|\hat{v}|^2
= \tau \rho_2|\hat{y}|^2 + \tau m Re\left(\xi^2\overline{\hat{v}}\int_0^{\infty}g(s)\hat{\eta}(s)ds\right)  \\
+\chi_{0,\tau} Re\left(i\xi\overline{\hat{u}}\hat{y}\right)
+\left( -\delta\tau (\tau-\Gamma)  - \frac{\delta\rho_1\Gamma}{\rho_3 k}\right) Re(i\xi\hat{q}\overline{\hat{u}}) \\
-\delta \tau(\tau-\Gamma) Re(\hat{y}\overline{\hat{q}}) -\delta \beta (\tau-\Gamma) Re(\hat{q}\overline{\hat{v}}).
\end{multline*}
Applying Young inequality, \eqref{n5} follows.

\end{proof}

%%%%%%%%%%%%%%%%%%%%%%%%%%%%%%%%%%%%%%%%%%%%%%%%%%%%%%%%%%%%%%%%%%%%%%%%%%%%%%%%%%%%%%%%%%%%%%%%%%%

\begin{lem}\label{lem3}
Consider the functional
\begin{equation}\label{e13}
J_2(\xi,t)=\rho_1Re\left(i\xi\hat{v}\overline{\hat{u}}\right)+ \rho_2Re\left(i\xi\hat{y}\overline{\hat{z}}\right)+\delta\tau Re(i\xi\hat{z}\overline{\hat{q}})
\end{equation}
For any $\varepsilon >0$, the estimate
\begin{multline}\label{e14}
\frac{d}{dt}J_2(\xi,t) +\rho_1(1-\varepsilon)\xi^2|\hat{u}|^2+a(1-\varepsilon)\xi^2|\hat{z}|^2\leq C(\varepsilon)(1+\xi^2)|\hat{v}|^2 +C(\varepsilon)(1+\xi^2)|\hat{y}|^2 \\
+C(\varepsilon)(1+\xi^2)|\hat{q}|^2 +C(\varepsilon)\xi^4\left|\int_0^{\infty}g(s)\hat{\eta}(s)ds\right|^2
\end{multline}
is satisfied.
\end{lem}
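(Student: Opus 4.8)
The plan is to differentiate $J_2$ term by term in $t$ and to substitute the evolution equations \eqref{e1}--\eqref{e6} for each time derivative, exactly mirroring the computation carried out for $J_1$ in Lemma \ref{lem2}. Concretely, in $\frac{d}{dt}\rho_1 Re(i\xi\hat{v}\overline{\hat{u}})$ I would replace $\hat{v}_t$ using \eqref{e1} and $\hat{u}_t$ using \eqref{e2}; in $\frac{d}{dt}\rho_2 Re(i\xi\hat{y}\overline{\hat{z}})$ I would use \eqref{e4} for $\hat{y}_t$ and \eqref{e3} for $\hat{z}_t$; and in $\frac{d}{dt}\delta\tau Re(i\xi\hat{z}\overline{\hat{q}})$ I would use \eqref{e3} for $\hat{z}_t$ and \eqref{e6} for $\hat{q}_t$. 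The first functional produces the dissipative term $-\rho_1\xi^2|\hat{u}|^2$ together with $k\xi^2|\hat{v}|^2$ and a cross term $-\rho_1 Re(i\xi\hat{y}\overline{\hat{u}})$; the second produces the dissipative term $-a\xi^2|\hat{z}|^2$ and $\rho_2\xi^2|\hat{y}|^2$ together with cross terms in $\hat{v}$, $\hat{q}$, the history integral, and, crucially, the thermal coupling $+\delta\xi^2 Re(\hat{\theta}\overline{\hat{z}})$.

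The main structural point of the argument is to recognize the role of the third functional $\delta\tau Re(i\xi\hat{z}\overline{\hat{q}})$: its derivative contributes, among other terms, $-\delta\xi^2 Re(\hat{z}\overline{\hat{\theta}})$, and since $Re(\hat{\theta}\overline{\hat{z}}) = Re(\hat{z}\overline{\hat{\theta}})$ the two $\delta\xi^2$ thermal terms annihilate. This cancellation is precisely what permits the absence of any $|\hat{\theta}|^2$ term on the right-hand side of \eqref{e14}; without the third piece of $J_2$ one would be forced to keep a $(1+\xi^2)|\hat{\theta}|^2$ term that is not among the admissible quantities. After collecting everything I expect an identity of the form
\begin{equation*}
\begin{aligned}
\frac{d}{dt}J_2 + \rho_1\xi^2|\hat{u}|^2 + a\xi^2|\hat{z}|^2 &= k\xi^2|\hat{v}|^2 + \rho_2\xi^2|\hat{y}|^2 - \rho_1 Re(i\xi\hat{y}\overline{\hat{u}}) + k\xi Re(i\hat{v}\overline{\hat{z}}) \\
&\quad - m\xi^3 Re\Big(i\overline{\hat{z}}\int_0^{\infty} g(s)\hat{\eta}(s)ds\Big) - \delta\tau\xi^2 Re(\hat{y}\overline{\hat{q}}) - \delta\beta\xi Re(i\hat{z}\overline{\hat{q}}).
\end{aligned}
\end{equation*}

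To conclude I would bound each cross term by Young's inequality, steering the quadratic parts onto $\xi^2|\hat{u}|^2$ and $\xi^2|\hat{z}|^2$ so that they can be absorbed into the two dissipative terms on the left at the price of the factor $(1-\varepsilon)$. Thus $-\rho_1 Re(i\xi\hat{y}\overline{\hat{u}})$ gives $\varepsilon\rho_1\xi^2|\hat{u}|^2 + C(\varepsilon)|\hat{y}|^2$, while $k\xi Re(i\hat{v}\overline{\hat{z}})$ and $\delta\beta\xi Re(i\hat{z}\overline{\hat{q}})$ give $\varepsilon a\xi^2|\hat{z}|^2 + C(\varepsilon)|\hat{v}|^2$ and $\varepsilon a\xi^2|\hat{z}|^2 + C(\varepsilon)|\hat{q}|^2$ respectively; the terms $k\xi^2|\hat{v}|^2$, $\rho_2\xi^2|\hat{y}|^2$ and $\delta\tau\xi^2 Re(\hat{y}\overline{\hat{q}})$ are absorbed directly into $C(\varepsilon)(1+\xi^2)(|\hat{v}|^2+|\hat{y}|^2+|\hat{q}|^2)$.

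The one place that requires care is the history term $-m\xi^3 Re(i\overline{\hat{z}}\int_0^{\infty}g(s)\hat{\eta}(s)ds)$, which carries three powers of $\xi$. I would split $\xi^3=\xi\cdot\xi^2$ and apply Young's inequality in the form $\xi^3|\hat{z}|\,|\int_0^{\infty}g(s)\hat{\eta}(s)ds| \leq \varepsilon a\xi^2|\hat{z}|^2 + C(\varepsilon)\xi^4|\int_0^{\infty}g(s)\hat{\eta}(s)ds|^2$, which is exactly what produces the $\xi^4$ weight on the history integral in \eqref{e14}. Collecting the absorbed $\varepsilon$-contributions and rescaling $\varepsilon$ once at the end (to distribute it evenly among the several pieces of $\xi^2|\hat{z}|^2$) yields \eqref{e14}. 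The main obstacle is therefore not any single estimate but keeping the bookkeeping of the thermal cancellation and of the $\xi$-weights consistent, so that no $|\hat{\theta}|^2$ term survives and the history contribution lands with the correct $\xi^4$ power.
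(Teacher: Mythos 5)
Your proposal is correct and follows essentially the same route as the paper: differentiate each of the three pieces of $J_2$, substitute \eqref{e1}--\eqref{e4} and \eqref{e6}, use the cancellation of the two $\delta\,Re(\xi^2\hat{\theta}\overline{\hat{z}})$ contributions between the second and third functionals (which is precisely why the term $\delta\tau Re(i\xi\hat{z}\overline{\hat{q}})$ is included), and then apply Young's inequality, absorbing the $\xi^3$ history cross term as $\varepsilon a\xi^2|\hat{z}|^2 + C(\varepsilon)\xi^4\left|\int_0^{\infty}g(s)\hat{\eta}(s)\,ds\right|^2$. The identity you display agrees with the paper's (note $Re(i\xi\hat{q}\overline{\hat{z}})=-Re(i\xi\hat{z}\overline{\hat{q}})$, so the sign of the $\beta\delta$ term matches), and the final bookkeeping is the same.
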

%%%%%%%%%%%%%%%%%%%%%%%%%%%%%%%%%%%%%%%%%%%%%%%%%%%%%%%%%%%%%%%%%%%%%%%%%%%%%%%%%%%%%%%%%%%%%%

\begin{proof}
Multiplying \eqref{e1} by $i\rho_1\xi \overline{\hat{u}}$ and taking real part,
\begin{align*}
\rho_1Re\left(i\xi\hat{v}_t\overline{\hat{u}}\right) +\rho_1\xi^2|\hat{u}|^2+\rho_1Re\left(i\xi\hat{y}\overline{\hat{u}}\right)=0.
\end{align*}
Multiplying \eqref{e2} by $-i\xi \overline{\hat{v}}$ and taking real part,
\begin{align*}
-\rho_1Re\left(i\xi\hat{u}_t\overline{\hat{v}}\right) -k\xi^2|\hat{v}|^2=0.
\end{align*}
Adding the above identities, we obtain
\begin{align}\label{e15}
\rho_1\frac{d}{dt}Re\left(i\xi\hat{v}\overline{\hat{u}}\right)+\rho_1\xi^2|\hat{u}|^2=k\xi^2|\hat{v}|^2 -\rho_1Re\left(i\xi\hat{y}\overline{\hat{u}}\right).
\end{align}
Moreover, multiplying \eqref{e3} by $-i\rho_2\xi \overline{\hat{y}}$ and taking real part,
\begin{align*}
-\rho_2Re\left(i\xi\hat{z}_t\overline{\hat{y}}\right) -\rho_2\xi^2|\hat{y}|^2=0.
\end{align*}
Multiplying \eqref{e4} by $i\xi \overline{\hat{z}}$ and taking real part,
\begin{align*}
\rho_2Re\left(i\xi\hat{y}_t\overline{\hat{z}}\right) +a\xi^2|\hat{z}|^2 -kRe\left(i\xi\hat{v}\overline{\hat{z}}\right)+m Re\left(i\xi^3\overline{\hat{z}}\int_0^{\infty}g(s)\hat{\eta}(s)ds\right) -\delta Re\left(\xi^2\hat{\theta}\overline{\hat{z}}\right)=0.
\end{align*}
Adding the above identities,
\begin{equation}\label{e16}
\rho_2\frac{d}{dt}Re\left(i\xi\hat{y}\overline{\hat{z}}\right) +a\xi^2|\hat{z}|^2= \rho_2\xi^2|\hat{y}|^2+kRe\left(i\xi\hat{v}\overline{\hat{z}}\right)-m Re\left(i\xi^3\overline{\hat{z}}\int_0^{\infty}g(s)\hat{\eta}(s)ds\right) +\delta Re\left(\xi^2\hat{\theta}\overline{\hat{z}}\right).
\end{equation}
Multiplying \eqref{e6} by $-i\delta\xi \overline{\hat{z}}$ and taking real part,
\begin{equation*}
-\delta \tau Re(i\xi \hat{q}_t\overline{\hat{z}})-\beta \delta Re (i\xi\hat{q}\overline{\hat{z}})+\delta Re(\xi^2\hat{\theta}\overline{\hat{z}})=0.
\end{equation*}
Multiplying  \eqref{e3} by $i\delta\tau\xi \overline{\hat{q}}$ and taking real part,
\begin{equation*}
\delta \tau Re(i\xi\hat{z}_t\overline{\hat{q}})+\delta \tau Re(\xi^2\hat{y}\overline{\hat{q}})=0.
\end{equation*}
Adding the above identities, we obtain
\begin{equation}\label{n6}
\delta\tau\frac{d}{dt}  Re(i\xi\hat{z}\overline{\hat{q}}) = -\delta \tau Re(\xi^2\hat{y}\overline{\hat{q}}) + \beta \delta Re (i\xi\hat{q}\overline{\hat{z}})-\delta Re(\xi^2\hat{\theta}\overline{\hat{z}}).
\end{equation}
Computing \eqref{e15} $+$ \eqref{e16} $+$ \eqref{n6}, we find that
\begin{multline*}
\frac{d}{dt}\left\lbrace \rho_1Re\left(i\xi\hat{v}\overline{\hat{u}}\right)+ \rho_2Re\left(i\xi\hat{y}\overline{\hat{z}}\right)+\delta\tau Re(i\xi\hat{z}\overline{\hat{q}}) \right\rbrace +a\xi^2|\hat{z}|^2+\rho_1\xi^2|\hat{u}|^2= \rho_2\xi^2|\hat{y}|^2+kRe\left(i\xi\hat{v}\overline{\hat{z}}\right) \\
-m Re\left(i\xi^3\overline{\hat{z}}\int_0^{\infty}g(s)\hat{\eta}(s)ds\right)
-\delta \tau Re(\xi^2\hat{y}\overline{\hat{q}}) + \beta \delta Re (i\xi\hat{q}\overline{\hat{z}})+ k\xi^2|\hat{v}|^2 -\rho_1Re\left(i\xi\hat{y}\overline{\hat{u}}\right).
\end{multline*}
Hence,
\begin{multline*}
\frac{d}{dt}J_2(\xi,t)+a\xi^2|\hat{z}|^2+\rho_1\xi^2|\hat{u}|^2 \leq  \rho_2\xi^2|\hat{y}|^2+k|\xi||\hat{v}||\hat{z}| \\
+m|\xi|^3|\hat{z}|\left|\int_0^{\infty}g(s)\hat{\eta}(s)ds\right|
+\delta \tau \xi^2|\hat{y}||\hat{q}| + \beta \delta |\xi||\hat{q}||\hat{z}|+ k\xi^2|\hat{v}|^2 +\rho_1|\xi||\hat{y}||\hat{u}|
\end{multline*}
applying Young's inequality, we obtain \eqref{e14}.
\end{proof}

%%%%%%%%%%%%%%%%%%%%%%%%%%%%%%%%%%%%%%%%%%%%%%%%%%%%%%%%%%%%%%%%%%%%%%%%%
\begin{lem}\label{lem4}
Consider the functional
\begin{equation*}
J_3(\xi,t)=-\rho_2Re\left( \xi^2 \overline{\hat{y}}\int_0^{\infty}g(s)\hat{\eta}(s)ds\right).
\end{equation*}
Then, for any $\varepsilon>0$,  the following estimate
\begin{multline}\label{newe20}
\frac{d}{dt}J_3(\xi,t) +\rho_2b_0(1-\varepsilon)\xi^2|\hat{y}|^2 \leq C(\varepsilon) \xi^2  \int_0^{\infty}g(s)|\hat{\eta}(s)|^2ds +a |\xi|^3 |\hat{z}|\left| \int_0^{\infty}g(s)\hat{\eta}(s)ds\right| \\
+m\xi^4\left| \int_0^{\infty}g(s)\hat{\eta}(s)ds\right|^2 +k\xi^2 |\hat{v}| \left| \int_0^{\infty}g(s)\hat{\eta}(s)ds\right| +\delta |\xi|^3 |\hat{\theta}| \left| \int_0^{\infty}g(s)\hat{\eta}(s)ds\right|
\end{multline}
holds.
\end{lem}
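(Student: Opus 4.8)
The plan is to differentiate $J_3$ directly and substitute the evolution equations \eqref{e4} and \eqref{e7}, exactly in the spirit of Lemmas \ref{lem2} and \ref{lem3}. Writing $G:=\int_0^{\infty}g(s)\hat{\eta}(s)ds$ so that $J_3=-\rho_2\xi^2 Re(\overline{\hat{y}}\,G)$, the product rule gives $\frac{d}{dt}J_3=-\rho_2\xi^2 Re(\overline{\hat{y}_t}\,G)-\rho_2\xi^2 Re(\overline{\hat{y}}\,G_t)$. For the first piece I would insert $\rho_2\hat{y}_t=ia\xi\hat{z}-m\xi^2 G+k\hat{v}-i\delta\xi\hat{\theta}$, read off from \eqref{e4}; after taking real parts and bounding by moduli, this produces precisely the $m\xi^4|G|^2$ term together with the cross terms $a|\xi|^3|\hat{z}||G|$, $k\xi^2|\hat{v}||G|$ and $\delta|\xi|^3|\hat{\theta}||G|$ that appear on the right-hand side of \eqref{newe20}.

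The decisive computation is $G_t$. Differentiating under the integral sign and using \eqref{e7} in the form $\hat{\eta}_t=\hat{y}-\hat{\eta}_s$ gives $G_t=b_0\hat{y}-\int_0^{\infty}g(s)\hat{\eta}_s(s)ds$, where $b_0=\int_0^{\infty}g(s)ds$ by $(H_3)$. Integrating by parts in $s$ and invoking the boundary condition $\hat{\eta}(0)=0$ (the Fourier image of $\eta(\cdot,0,\cdot)=0$) together with the decay of $g$ at infinity to kill both boundary terms, I obtain $G_t=b_0\hat{y}+\int_0^{\infty}g'(s)\hat{\eta}(s)ds$. Substituting this into $-\rho_2\xi^2 Re(\overline{\hat{y}}\,G_t)$ produces the coercive term $-\rho_2 b_0\xi^2|\hat{y}|^2$, which I move to the left-hand side to create the dissipation $\rho_2 b_0\xi^2|\hat{y}|^2$ demanded by the statement.

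The remaining contribution is $-\rho_2\xi^2 Re\big(\overline{\hat{y}}\int_0^{\infty}g'(s)\hat{\eta}(s)ds\big)$, and this is where the main obstacle lies: the history dissipation is weak, the functional sees $g'$ rather than $g$, and $g'$ admits no useful pointwise bound on its own. Here I would invoke hypothesis $(H_2)$, $|g'(s)|\le k_1 g(s)$, followed by Cauchy--Schwarz in the measure $g(s)ds$, to estimate $\big|\int_0^{\infty}g'(s)\hat{\eta}(s)ds\big|\le k_1 b_0^{1/2}\big(\int_0^{\infty}g(s)|\hat{\eta}(s)|^2ds\big)^{1/2}$. A Young inequality with parameter $\varepsilon$ then splits the product $\rho_2\xi^2|\hat{y}|\cdot(\cdots)^{1/2}$ into $\varepsilon\rho_2 b_0\xi^2|\hat{y}|^2$, absorbed into the dissipation to leave the factor $(1-\varepsilon)$, plus $C(\varepsilon)\xi^2\int_0^{\infty}g(s)|\hat{\eta}(s)|^2ds$. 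The surviving cross terms from the first piece are left in their modulus form, which is exactly the format of \eqref{newe20}, so no further Young inequalities are applied to them.
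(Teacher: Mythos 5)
Your proposal is correct and follows essentially the same route as the paper: differentiating $J_3$ and substituting \eqref{e4} and \eqref{e7} is just a reorganization of the paper's multiplier computation, and both arguments then integrate by parts in $s$ (using $\hat{\eta}(0)=0$), extract the coercive term $\rho_2 b_0\xi^2|\hat{y}|^2$, and control the resulting $\int_0^\infty g'(s)\hat{\eta}(s)\,ds$ term via $(H_2)$, Cauchy--Schwarz in the measure $g(s)\,ds$, and Young's inequality with parameter $\varepsilon$. No gaps.
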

\begin{proof}
Multiplying \eqref{e4} by $-\xi^2g(s) \overline{\hat{\eta}}$ and  taking the integration with respect to $s$ for the real parts, it follows that
\begin{multline*}
-\rho_2  Re\left(\xi^2 \hat{y}_t\int_0^{\infty}g(s)\overline{\hat{\eta}}(s)ds\right) + a Re\left(i \xi^3 \hat{z}\int_0^{\infty}g(s)\overline{\hat{\eta}}(s)ds\right) \\
-m Re\left( \xi^4 \int_0^{\infty}g(s)\hat{\eta}(s)ds\int_0^{\infty}g(s)\overline{\hat{\eta}}(s)ds\right)  + k Re\left( \xi^2 \hat{v}\int_0^{\infty}g(s)\overline{\hat{\eta}}(s)ds\right)  \\
- \delta Re\left(i \xi^3 \hat{\theta}\int_0^{\infty}g(s)\overline{\hat{\eta}}(s)ds\right)=0.
\end{multline*}
Multiplying \eqref{e7} by $-\rho_2\xi^2g(s) \overline{\hat{y}}$ and  taking the integration with respect to $s$ for the real parts, it follows that
\begin{align*}
-\rho_2  Re\left(\xi^2 \overline{\hat{y}}\int_0^{\infty}g(s)\hat{\eta}_t(s)ds\right)   -\rho_2  Re\left(\xi^2 \overline{\hat{y}}\int_0^{\infty}g(s)\hat{\eta}_s(s)ds\right)+\rho_2\xi^2\int_0^{\infty}g(s)ds |\hat{y}|^2=0.
\end{align*}
Adding the above identities, we obtain that
\begin{multline*}
\frac{d}{dt}K_3(\xi,t) +\rho_2\xi^2b_0|\hat{y}|^2 = \rho_2Re\left( \xi^2 \overline{\hat{y}}\int_0^{\infty}g(s)\hat{\eta}_s(s)ds\right) -aRe\left(i \xi^3 \hat{z}\int_0^{\infty}g(s)\overline{\hat{\eta}}(s)ds\right) \\
+m\xi^4\left| \int_0^{\infty}g(s)\hat{\eta}(s)ds\right|^2 - k Re\left( \xi^2 \hat{v}\int_0^{\infty}g(s)\overline{\hat{\eta}}(s)ds\right) +\delta Re\left(i \xi^3 \hat{\theta}\int_0^{\infty}g(s)\overline{\hat{\eta}}(s)ds\right).
\end{multline*}
Now, integrating by parts the first term on the right-hand side of the above equality, we get
\begin{multline*}
\frac{d}{dt}K_3(\xi,t) +\rho_2\xi^2b_0|\hat{y}|^2 = -\rho_2Re\left( \xi^2 \overline{\hat{y}}\int_0^{\infty}g'(s)\hat{\eta}(s)ds\right) -aRe\left(i \xi^3 \hat{z}\int_0^{\infty}g(s)\overline{\hat{\eta}}(s)ds\right) \\
+m\xi^4\left| \int_0^{\infty}g(s)\hat{\eta}(s)ds\right|^2 - k Re\left( \xi^2 \hat{v}\int_0^{\infty}g(s)\overline{\hat{\eta}}(s)ds\right) +\delta Re\left(i \xi^3 \hat{\theta}\int_0^{\infty}g(s)\overline{\hat{\eta}}(s)ds\right).
\end{multline*}
Hence,
\begin{multline*}
\frac{d}{dt}K_3(\xi,t) +\rho_2\xi^2b_0|\hat{y}|^2 \leq \rho_2 \xi^2|\hat{y}|  \left| \int_0^{\infty}g'(s)\hat{\eta}(s)ds\right| +a |\xi^3| |\hat{z}|\left| \int_0^{\infty}g(s)\overline{\hat{\eta}}(s)ds\right| \\
+m\xi^4\left| \int_0^{\infty}g(s)\hat{\eta}(s)ds\right|^2 +k\xi^2 |\hat{v}| \left| \int_0^{\infty}g(s)\overline{\hat{\eta}}(s)ds\right| +\delta |\xi|^3 |\hat{\theta}| \left| \int_0^{\infty}g(s)\overline{\hat{\eta}}(s)ds\right|.
\end{multline*}
Young inequality yields
\begin{multline*}
\frac{d}{dt}K_3(\xi,t) +\rho_2\xi^2b_0(1-\varepsilon)|\hat{y}|^2 \leq C(\varepsilon) \xi^2  \int_0^{\infty}g'(s)|\hat{\eta}(s)|^2ds  +a |\xi^3| |\hat{z}|\left| \int_0^{\infty}g(s)\overline{\hat{\eta}}(s)ds\right| \\
+m\xi^4\left| \int_0^{\infty}g(s)\hat{\eta}(s)ds\right|^2 +k\xi^2 |\hat{v}| \left| \int_0^{\infty}g(s)\overline{\hat{\eta}}(s)ds\right| +\delta |\xi|^3 |\hat{\theta}| \left| \int_0^{\infty}g(s)\overline{\hat{\eta}}(s)ds\right|.
\end{multline*}
Using the hypothesis on $g'$, the result follows.
\end{proof}
%%
%%
%%
%%%%%%%%%%%%%%%%%%%%%%%%%%%%%%%%%%%%%%%%%%%%%%%%%%%%%%%%%%%%%%%%%%%%%%%%%%%%%%%%%%%%%%%%%%%%%
\begin{lem}\label{lem5}
Consider the functional
\begin{equation}\label{e19}
J_4(\xi,t)=\tau\rho_3Re\left(i\xi\hat{\theta}\overline{\hat{q}}\right).
\end{equation}
For any $\varepsilon>0$, the following estimate
\begin{align}\label{e20}
\frac{d}{dt}J_4(\xi,t) +\rho_3(1 -\varepsilon)\xi^2|\hat{\theta}|^2 \leq \tau\delta \xi^2|\hat{y}||\hat{q}|+C(\varepsilon)(1+\xi^2)|\hat{q}|^2
\end{align}
holds.
\end{lem}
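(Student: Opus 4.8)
The plan is to differentiate $J_4$ by the product rule and feed the two resulting terms with equations \eqref{e5} and \eqref{e6}, exactly as in the previous lemmas. Writing $\frac{d}{dt}Re(i\xi\hat{\theta}\overline{\hat{q}}) = Re(i\xi\hat{\theta}_t\overline{\hat{q}}) + Re(i\xi\hat{\theta}\overline{\hat{q}_t})$, I would generate the first term by multiplying \eqref{e5} by $\tau i\xi\overline{\hat{q}}$ and taking real parts, which yields $\tau\rho_3 Re(i\xi\hat{\theta}_t\overline{\hat{q}}) = \tau\xi^2|\hat{q}|^2 + \tau\delta\xi^2 Re(\hat{y}\overline{\hat{q}})$, using $(i\xi)(i\xi)=-\xi^2$ and that $|\hat{q}|^2$ is real.

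For the second term I would use the conjugate of \eqref{e6}, namely $\tau\overline{\hat{q}_t} = -\beta\overline{\hat{q}}+i\xi\overline{\hat{\theta}}$, substituting it into $\tau\rho_3 Re(i\xi\hat{\theta}\overline{\hat{q}_t})$ to obtain $\tau\rho_3 Re(i\xi\hat{\theta}\overline{\hat{q}_t}) = -\beta\rho_3 Re(i\xi\hat{\theta}\overline{\hat{q}}) - \rho_3\xi^2|\hat{\theta}|^2$. The only delicate point in the whole argument is tracking the signs and conjugations here (e.g. the identity $Re(i\xi\hat{q}\overline{\hat{\theta}}) = -Re(i\xi\hat{\theta}\overline{\hat{q}})$); everything else is routine bookkeeping. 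Adding the two contributions produces the exact identity
\begin{equation*}
\frac{d}{dt}J_4(\xi,t) + \rho_3\xi^2|\hat{\theta}|^2 = \tau\xi^2|\hat{q}|^2 + \tau\delta\xi^2 Re(\hat{y}\overline{\hat{q}}) - \beta\rho_3 Re(i\xi\hat{\theta}\overline{\hat{q}}).
\end{equation*}

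It then remains to estimate the right-hand side. The first term is bounded by $C(1+\xi^2)|\hat{q}|^2$. The second term I would leave untouched, since $\tau\delta\xi^2 Re(\hat{y}\overline{\hat{q}}) \le \tau\delta\xi^2|\hat{y}||\hat{q}|$ is precisely the first term on the right of \eqref{e20}. For the last term I would apply Young's inequality in the form $\beta\rho_3|\xi||\hat{\theta}||\hat{q}| \le \varepsilon\rho_3\xi^2|\hat{\theta}|^2 + C(\varepsilon)|\hat{q}|^2$ and absorb the $\varepsilon\rho_3\xi^2|\hat{\theta}|^2$ contribution into the left-hand side, turning $\rho_3\xi^2|\hat{\theta}|^2$ into $\rho_3(1-\varepsilon)\xi^2|\hat{\theta}|^2$. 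Collecting the remaining quadratic-in-$\hat{q}$ terms into $C(\varepsilon)(1+\xi^2)|\hat{q}|^2$ gives exactly \eqref{e20}.

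I expect no genuine obstacle here. The key dissipative term $\xi^2|\hat{\theta}|^2$ already emerges with the correct (negative) sign from the product $(i\xi)(i\xi)=-\xi^2$ in both multiplications, so the estimate is essentially forced once the identity is written down; only the sign and conjugation bookkeeping in the $\overline{\hat{q}_t}$ step requires some care.
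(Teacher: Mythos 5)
Your proposal is correct and is essentially identical to the paper's proof: the paper obtains the same exact identity by multiplying \eqref{e5} by $i\tau\xi\overline{\hat{q}}$ and \eqref{e6} by $-i\rho_3\xi\overline{\hat{\theta}}$ (equivalent to your substitution of the conjugated equation, since $Re(i\xi\hat{q}\overline{\hat{\theta}})=-Re(i\xi\hat{\theta}\overline{\hat{q}})$), and then applies Young's inequality exactly as you do. No gaps.
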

%%
%%
%%%%%%%%%%%%%%%%%%%%%%%%%%%%%%%%%%%%%%%%%%%%%%%%%%%%%%%%%%%%%%%%%%%%%%%%%%%%%%%%%%%%%%%%%%%%%
%%
\begin{proof}
Multiplying \eqref{e5} by $i\tau\xi \overline{\hat{q}}$ and taking real part,
\begin{align*}
\tau\rho_3Re\left(i\xi \hat{\theta}_t\overline{\hat{q}}\right)- \tau\xi^2|\hat{q}| ^2-\tau \delta Re\left(\xi^2\hat{y}\overline{\hat{q}}\right)=0.
\end{align*}
Multiplying \eqref{e6} by $-i\rho_3\xi \overline{\hat{\theta}}$ and taking real part,
\begin{align*}
-\tau\rho_3Re\left(i\xi \hat{q}_t\overline{\hat{\theta}}\right)-\beta\rho_3Re\left(i\xi\hat{q}\overline{\hat{\theta}}\right) +\rho_3\xi^2|\hat{\theta}| ^2=0.
\end{align*}
Adding up the above identities, it follows that
\begin{align*}
\frac{d}{dt}J_4(\xi,t) +\rho_3\xi^2|\hat{\theta}| ^2\leq  \tau\xi^2|\hat{q}| ^2+\tau \delta\xi^2|\hat{y}||\hat{q}| +\beta\rho_3|\xi||\hat{q}||\hat{\theta}|.
\end{align*}
Applying Young's inequality, the result follows.
\end{proof}
\vglue 0.3cm

%%%%%%%%%%%%%%%%%%%%%%%%%%%%%%%%%%%%%%%%%%%%%%%%%%%%%%%%%

\begin{proof}[ \textbf{\large Proof of Theorem \ref{teo1}}]
In order to make the proof  clear, we will consider several cases:
\vglue 0.2 cm
\noindent \textbf{I. Case $\boldsymbol{\chi_{0,\tau}=0}$:}. Consider the expressions
\[
\lambda_1\xi^2J_1(\xi,t), \quad \lambda_2\dfrac{\xi^2}{1+\xi^2}J_2(\xi,t), \quad \lambda_3J_3(\xi,t)
\]
where $\lambda_1$, $\lambda_2$ and  $\lambda_3$ are positive constants to be fixed later. Thus, Lemmas \ref{lem2} and \ref{lem3} imply that
\begin{multline}\label{e24}
\frac{d}{dt}\left\lbrace \lambda_1\xi^2 J_1(\xi,t)+\lambda_2\dfrac{\xi^2}{1+\xi^2}J_2(\xi,t) \right\rbrace + k\left[\lambda_1\tau(1-\varepsilon)-C(\varepsilon)\lambda_2\right]\xi^2|\hat{v}|^2 +\rho_1(1-2\varepsilon)\lambda_2\dfrac{\xi^4}{1+\xi^2}|\hat{u}|^2 \\
 +a(1-\varepsilon)\lambda_2\dfrac{\xi^4}{1+\xi^2}|\hat{z}|^2 \\
\leq C(\varepsilon,\lambda_1,\lambda_2)\xi^2|\hat{y}|^2 +C(\varepsilon,\lambda_1,\lambda_2)\xi^6\left|\int_0^{\infty}g(s)\hat{\eta}(s)ds\right|^2
 +C(\varepsilon,\lambda_1,\lambda_2)\xi^2(1+\xi^2)|\hat{q}|^2.
\end{multline}
Furthermore, Applying Young's inequality in equation \eqref{newe20} of Lemma \ref{lem4}, it follows that
\begin{multline*}
\frac{d}{dt}\lambda_3J_3(\xi,t) +\rho_2\lambda_3b_0 (1 -\varepsilon)\xi^2|\hat{y}|^2\leq C(\varepsilon,\lambda_3)\xi^2\int_0^{\infty}g(s)|\hat{\eta}(s)|^2ds+ a\lambda_2\varepsilon\frac{\xi^4}{1+\xi^2}|\hat{z}|^2 \\
+ C(\varepsilon,\lambda_2,\lambda_3)\xi^2(1+\xi^2)\left|\int_0^{\infty}g(s)\hat{\eta}(s)ds\right|^2 +\lambda_3m\xi^4\left|\int_0^{\infty}g(s)\hat{\eta}(s)ds\right|^2+k\lambda_1\tau\varepsilon\xi^2|\hat{v}|^2 \\
+C(\varepsilon,\lambda_1,\lambda_3)\xi^2\left|\int_0^{\infty}g(s)\hat{\eta}(s)ds\right|^2+ \lambda_3\delta |\xi|^3|\hat{\theta}|\left|\int_0^{\infty}g(s)\hat{\eta}(s)ds\right|.
\end{multline*}
Thus,
\begin{multline}\label{e25}
\frac{d}{dt}\lambda_3J_3(\xi,t) +\rho_2\lambda_3b_0 (1 -\varepsilon)\xi^2|\hat{y}|^2\leq C(\varepsilon,\lambda_3)\xi^2\int_0^{\infty}g(s)|\hat{\eta}(s)|^2ds+ a\lambda_2\varepsilon\frac{\xi^4}{1+\xi^2}|\hat{z}|^2 +k\lambda_1\tau\varepsilon\xi^2|\hat{v}|^2\\
+ C(\varepsilon,\lambda_1,\lambda_2,\lambda_3)\xi^2(1+\xi^2)\left|\int_0^{\infty}g(s)\hat{\eta}(s)ds\right|^2 + \lambda_3\delta |\xi|^3|\hat{\theta}|\left|\int_0^{\infty}g(s)\hat{\eta}(s)ds\right|.
\end{multline}
Computing \eqref{e24} $+$ \eqref{e25}, we obtain
\begin{multline*}
\frac{d}{dt}\left\lbrace \lambda_1\xi^2 J_1(\xi,t)+\lambda_2\dfrac{\xi^2}{1+\xi^2}J_2(\xi,t)+\lambda_3J_3(\xi,t) \right\rbrace + k\left[\lambda_1\tau(1-2\varepsilon)-C(\varepsilon)\lambda_2\right]\xi^2|\hat{v}|^2 +\rho_1(1-2\varepsilon)\lambda_2\dfrac{\xi^4}{1+\xi^2}|\hat{u}|^2  \\+a(1-2\varepsilon)\lambda_2\dfrac{\xi^4}{1+\xi^2}|\hat{z}|^2 +\rho_2\left[\lambda_3b_0 (1 -\varepsilon)-C(\varepsilon,\lambda_1,\lambda_2)\right]\xi^2|\hat{y}|^2 \\
\leq  C(\varepsilon,\lambda_3)\xi^2\int_0^{\infty}g(s)|\hat{\eta}(s)|^2ds +C(\varepsilon,\lambda_1,\lambda_2,\lambda_3)\xi^2(1+\xi^2)^2\left|\int_0^{\infty}g(s)\hat{\eta}(s)ds\right|^2  \\
+C(\varepsilon,\lambda_1,\lambda_2,\lambda_3)\xi^2(1+\xi^2)|\hat{q}|^2 + \lambda_3\delta |\xi|^3|\hat{\theta}|\left|\int_0^{\infty}g(s)\hat{\eta}(s)ds\right|.
\end{multline*}
Now, consider the following functional
\begin{equation*}
\LL_1(\xi,t)= \lambda_1\xi^2 J_1(\xi,t)+\lambda_2\dfrac{\xi^2}{1+\xi^2}J_2(\xi,t)+\lambda_3J_3(\xi,t)+ J_4(\xi,t).
\end{equation*}
From Lemma \ref{lem5} and using Young inequality, it yields that
\begin{multline*}
\frac{d}{dt} \LL_1(\xi,t)  + k\left[\lambda_1\tau(1-2\varepsilon)-C(\varepsilon)\lambda_2\right]\xi^2|\hat{v}|^2 +\rho_1(1-2\varepsilon)\lambda_2\dfrac{\xi^4}{1+\xi^2}|\hat{u}|^2  \\+a(1-2\varepsilon)\lambda_2\dfrac{\xi^4}{1+\xi^2}|\hat{z}|^2 +\rho_2\left[\lambda_3b_0 (1 -2\varepsilon)-C(\varepsilon,\lambda_1,\lambda_2)\right]\xi^2|\hat{y}|^2 +\rho_3(1 -2\varepsilon)\xi^2|\hat{\theta}|^2\\
\leq  C(\varepsilon,\lambda_3)\xi^2\int_0^{\infty}g(s)|\hat{\eta}(s)|^2ds +C(\varepsilon,\lambda_1,\lambda_2,\lambda_3)\xi^2(1+\xi^2)^2\left|\int_0^{\infty}g(s)\hat{\eta}(s)ds\right|^2  \\
+C(\varepsilon,\lambda_1,\lambda_2,\lambda_3)(1+\xi^2)^2|\hat{q}|^2.
\end{multline*}
Now, using the following inequality:
\begin{align}
\left|\int_0^{\infty}g(s)\hat{\eta}(s)ds\right|^2 &= \left|\int_0^{\infty}g^{\frac{1}{2}}(s)g^{\frac{1}{2}}(s)\hat{\eta}(s)ds\right|^2 \notag\\
&\leq \left|\left(\int_0^{\infty}g(s)ds\right)^{\frac{1}{2}}\left(\int_0^{\infty}g(s)|\hat{\eta}(s)|^2ds\right)^{\frac{1}{2}}\right|^2 \label{ineq}\\
&= b_0 \int_0^{\infty}g(s)|\hat{\eta}(s)|^2ds \notag,
\end{align}
we obtain that
\begin{multline}\label{e26}
\frac{d}{dt} \LL_1(\xi,t)  + k\left[\lambda_1\tau(1-2\varepsilon)-C(\varepsilon)\lambda_2\right]\xi^2|\hat{v}|^2 +\rho_1(1-2\varepsilon)\lambda_2\dfrac{\xi^4}{1+\xi^2}|\hat{u}|^2  \\+a(1-2\varepsilon)\lambda_2\dfrac{\xi^4}{1+\xi^2}|\hat{z}|^2 +\rho_2\left[\lambda_3b_0 (1 -2\varepsilon)-C(\varepsilon,\lambda_1,\lambda_2)\right]\xi^2|\hat{y}|^2 +\rho_3(1 -2\varepsilon)\xi^2|\hat{\theta}|^2\\
\leq  C_1(1+b_0)\xi^2(1+\xi^2)^2\int_0^{\infty}g(s)|\hat{\eta}(s)|^2ds   +C_1(1+\xi^2)^2|\hat{q}|^2,
\end{multline}
where $C_1$ is a  positive constant that depends on $\varepsilon$ and $\lambda_j$ for $j=1,2,3$. Then, we can choose the constants to make all the coefficients in the right side in \eqref{e26} positive. First, let us fix $\varepsilon$ such that $\varepsilon< \frac12.$ Thus, we can take first choose $\lambda_2>0$ and
\begin{equation}\label{constant1}
\lambda_1 >\frac{C(\varepsilon)\lambda_2}{\tau(1-2\varepsilon)}, \quad \lambda_3 >\frac{C(\varepsilon,\lambda_1,\lambda_2)}{b_0(1-2\varepsilon)}.
\end{equation}
Then, from \eqref{constant1} and some trivial inequalities such as
\begin{equation}\label{trivial}
\frac{\xi^2}{1+\xi^2}\leq 1 \quad \text{and} \quad \frac{1}{1+\xi^2}\leq 1,
\end{equation}
we can deduce the existence of a positive constant $M_1$ such that
\begin{multline}\label{e28}
\frac{d}{dt}\LL_1(\xi,t) \leq -M_1 \frac{\xi^4}{1+\xi^2}\left\lbrace k|\hat{v}|^2 +\rho_1|\hat{u}|^2
+a|\hat{z}|^2 + \rho_3|\hat{\theta}|^2 +\rho_2|\hat{y}|^2\right\rbrace \\
 + C_1(1+b_0)\xi^2(1+\xi^2)^2\int_0^{\infty}g(s)|\hat{\eta}(s)|^2ds   +C_1(1+\xi^2)^2|\hat{q}|^2.
\end{multline}
Finally, we define the following Lyapunov functional:
\begin{equation}\label{f1}
\LL(\xi,t) = \LL_1(\xi,t,t)+N(1+\xi^2)^{2}\hat{E}(\xi,t),
\end{equation}
where $N$ is a positive constant to be fixed later. Note that the definition of $\LL_1$ together with inequality \eqref{ineq}, imply that
\begin{align*}
\left|\LL_1(\xi,t)\right| &\leq M_2 \left\lbrace |J_1(\xi,t)|+|J_2(\xi,t)|+|J_3(\xi,t)|+ |J_4(\xi,t)| \right\rbrace \\
& \leq M_2 (1+\xi^2)^{ 2}\hat{E}(\xi,t).
\end{align*}
Hence, we obtain
\begin{equation}\label{e29}
(N-M_2)(1+\xi^2)^{2}\hat{E}(\xi,t) \leq \LL(\xi,t)\leq (N+M_2)(1+\xi^2)^{2} \hat{E}(\xi,t).
\end{equation}
On the other hand, taking the derivative of $\LL$ with respect to $t$ and using the estimates \eqref{e28} and Lemma \ref{lem1}, it follows that
\begin{multline*}
\frac{d}{dt}\LL(\xi,t) \leq -M_1 \frac{\xi^4}{1+\xi^2}\left\lbrace k|\hat{v}|^2 +\rho_1|\hat{u}|^2
+a|\hat{z}|^2 + \rho_3|\hat{\theta}|^2 +\rho_2|\hat{y}|^2\right\rbrace  \\
 -\left(2N\beta-C_1\right)(1+\xi^2)^{2}|\hat{q}|^2 -\left( k_1Nm - C_1 (1+b_0)\right)(1+\xi^2)^{2}\xi^2\int_0^{\infty}g(s)|\hat{\eta}(s)|^2ds.
\end{multline*}
Now, choosing $N$ such that $N\geq \max\left\lbrace M_2,\dfrac{C_1}{2\beta}, \dfrac{C_1(1+b_0)}{k_1m}\right\rbrace$ and using the inequality $(1+\xi^2)^{2}\geq \dfrac{\xi^4}{1+\xi^2}$, there exists  a positive constant $M_3$ such that
\begin{equation*}
\frac{d}{dt}\LL(\xi,t) \leq - M_3\frac{\xi^4}{1+\xi^2} \hat{E}(\xi,t).
\end{equation*}
Estimate $(\ref{e29})$ implies that
\begin{equation*}
\frac{d}{dt}\LL(\xi,t) \leq - \Gamma\frac{\xi^4}{(1+\xi^2)^{3}} \LL(\xi,t)
\end{equation*}
where $\Gamma=\dfrac{M_3}{N+M_2}$. By Gronwall's inequality, it  follows that
\begin{equation*}
\LL(\xi,t) \leq  e^{-\Gamma \rho(\xi)t}\LL(\xi,0), \qquad \rho(\xi)=\frac{\xi^4}{(1+\xi^2)^{3}}.
\end{equation*}
Again by using $(\ref{e29})$, we have that
\begin{align*}
\hat{E}(\xi,t) \leq C  e^{-\Gamma \rho(\xi)t}\hat{E}(\xi,0), \quad \text{where}\quad C= \frac{N+M_2}{N-M_2}> 0.
\end{align*}
\vglue 0.3 cm
\noindent\textbf{II. Case $\boldsymbol{\chi_{0,\tau}\neq 0}$:} Similar to previous case, we introduce positive constants $\gamma_1$, $\gamma_2$, $\gamma_3$ and $\gamma_4$ that will be fixed later. Next, we estimate the following terms by applying Young's
inequality,
\begin{align*}
\left| \chi_{0,\tau} Re(i\xi\hat{u}\overline{\hat{y}})\right| &\leq \frac{\rho_1\gamma_2\varepsilon}{2\gamma_1}\frac{\xi^2}{1+\xi^2}|\hat{u}|^2+C(\varepsilon,\gamma_1,\gamma_2)(1+\xi^2)|\hat{y}|^2, \\
\frac{1}{\delta}\left| \left(\chi_{0,\tau} +\tau b_0 \rho_3 \left( \tau - \frac{\rho_1}{\rho_3 k}\right)\right)Re(i\xi \hat{q}\overline{\hat{u}})\right| &\leq \frac{\rho_1\gamma_2\varepsilon}{2\gamma_1}\frac{\xi^2}{1+\xi^2}|\hat{u}|^2+C(\varepsilon,\gamma_1,\gamma_2)(1+\xi^2)|\hat{q}|^2.
\end{align*}
Hence, from Lemma \ref{lem2}, we can rewrite \eqref{n5} as
\begin{multline}\label{e31}
\frac{d}{dt}J_1(\xi,t) + k\tau(1-\varepsilon)|\hat{v}|^2 \leq  \frac{\rho_1\gamma_2\varepsilon}{\gamma_1}\frac{\xi^2}{1+\xi^2}|\hat{u}|^2+C(\varepsilon,\gamma_1,\gamma_2)(1+\xi^2)|\hat{y}|^2 \\
+C(\varepsilon,\gamma_1,\gamma_2)(1+\xi^2)|\hat{q}|^2+C(\varepsilon)\xi^4\left|\int_0^{\infty}g(s)\hat{\eta}(s)ds\right|^2.
\end{multline}
Thus, the Lemma \ref{lem3} and \eqref{e31} imply that
\begin{multline}\label{e32}
\frac{d}{dt}\left\lbrace\gamma_1\frac{\xi^2}{1+\xi^2} J_1(\xi,t)+\gamma_2\dfrac{\xi^2}{(1+\xi^2)^2}J_2(\xi,t) \right\rbrace + k\left[\gamma_1\tau(1-\varepsilon)-C(\varepsilon)\gamma_2\right]\frac{\xi^2}{1+\xi^2}|\hat{v}|^2 +\rho_1(1-2\varepsilon)\gamma_2\dfrac{\xi^4}{(1+\xi^2)^2}|\hat{u}|^2 \\
 +a(1-\varepsilon)\gamma_2\dfrac{\xi^4}{(1+\xi^2)^2}|\hat{z}|^2 \\
\leq C(\varepsilon,\gamma_1,\gamma_2)\xi^2|\hat{y}|^2
+C(\varepsilon,\gamma_1,\gamma_2)\xi^2|\hat{q}|^2+C(\varepsilon,\gamma_1)\frac{\xi^6}{1+\xi^2}\left|\int_0^{\infty}g(s)\hat{\eta}(s)ds\right|^2 \\
+C(\varepsilon,\gamma_2)\frac{\xi^2}{1+\xi^2}|\hat{y}|^2 +C(\varepsilon,\gamma_2)\frac{\xi^2}{1+\xi^2}|\hat{q}|^2+C(\varepsilon,\gamma_2)\frac{\xi^6}{(1+\xi^2)^2}\left|\int_0^{\infty}g(s)\hat{\eta}(s)ds\right|^2 \\
\leq C(\varepsilon,\gamma_1,\gamma_2)\xi^2|\hat{y}|^2
+C(\varepsilon,\gamma_1,\gamma_2)\xi^2|\hat{q}|^2+C(\varepsilon,\gamma_1,\gamma_2)(1+\xi^2)\xi^2\left|\int_0^{\infty}g(s)\hat{\eta}(s)ds\right|^2.
\end{multline}
Furthermore, applying Young's inequality in the equation \eqref{newe20} of the Lemma \ref{lem4}, it follows that
\begin{multline*}
\frac{d}{dt}\gamma_3J_3(\xi,t) +\rho_2\gamma_3b_0 (1 -\varepsilon)\xi^2|\hat{y}|^2\leq C(\varepsilon,\gamma_3)\xi^2\int_0^{\infty}g(s)|\hat{\eta}(s)|^2ds+ a\gamma_2\varepsilon\frac{\xi^4}{(1+\xi^2)^2}|\hat{z}|^2 \\
+ C(\varepsilon,\gamma_2,\gamma_3)\xi^2(1+\xi^2)^2\left|\int_0^{\infty}g(s)\hat{\eta}(s)ds\right|^2 +\gamma_3m\xi^4\left|\int_0^{\infty}g(s)\hat{\eta}(s)ds\right|^2+k\gamma_1\tau \varepsilon\frac{\xi^2}{1+\xi^2}|\hat{v}|^2 \\
+C(\varepsilon,\gamma_1,\gamma_3)\xi^2(1+\xi^2)\left|\int_0^{\infty}g(s)\hat{\eta}(s)ds\right|^2+ \gamma_3\delta |\xi|^3|\hat{\theta}|\left|\int_0^{\infty}g(s)\hat{\eta}(s)ds\right|.
\end{multline*}
Thus,
\begin{multline}\label{e33}
\frac{d}{dt}\gamma_3J_3(\xi,t) +\rho_2\gamma_3b_0 (1 -\varepsilon)\xi^2|\hat{y}|^2\leq C(\varepsilon,\gamma_3)\xi^2\int_0^{\infty}g(s)|\hat{\eta}(s)|^2ds+ a\gamma_2\varepsilon\frac{\xi^4}{(1+\xi^2)^2}|\hat{z}|^2 +k\gamma_1\tau\varepsilon\xi^2(1+\xi^2)|\hat{v}|^2\\
+ C(\varepsilon,\gamma_1,\gamma_2,\gamma_3)\xi^2(1+\xi^2)^2\left|\int_0^{\infty}g(s)\hat{\eta}(s)ds\right|^2 + \gamma_3\delta |\xi|^3|\hat{\theta}|\left|\int_0^{\infty}g(s)\hat{\eta}(s)ds\right|.
\end{multline}
Adding \eqref{e32} and \eqref{e33}, we obtain
\begin{multline}\label{e33'}
\frac{d}{dt}\left\lbrace\gamma_1\frac{\xi^2}{1+\xi^2} J_1(\xi,t)+\gamma_2\dfrac{\xi^2}{(1+\xi^2)^2}J_2(\xi,t) + \gamma_3J_3(\xi,t) \right\rbrace + k\left[\gamma_1\tau(1-2\varepsilon)-C(\varepsilon)\gamma_2\right]\frac{\xi^2}{1+\xi^2}|\hat{v}|^2 \\
 +\rho_1(1-2\varepsilon)\gamma_2\dfrac{\xi^4}{(1+\xi^2)^2}|\hat{u}|^2  +a(1-2\varepsilon)\gamma_2\dfrac{\xi^4}{(1+\xi^2)^2}|\hat{z}|^2+\rho_2\left[\gamma_3b_0(1 -\varepsilon)- C(\varepsilon,\gamma_1,\gamma_2)\right]\xi^2|\hat{y}|^2 \\
\leq      C(\varepsilon,\gamma_3)\xi^2\int_0^{\infty}g(s)|\hat{\eta}(s)|^2ds+   C(\varepsilon,\gamma_1,\gamma_2,\gamma_3)\xi^2(1+\xi^2)^2\left|\int_0^{\infty}g(s)\hat{\eta}(s)ds\right|^2 \\ + \gamma_3\delta |\xi|^3|\hat{\theta}|\left|\int_0^{\infty}g(s)\hat{\eta}(s)ds\right|+C(\varepsilon,\gamma_1,\gamma_2,\gamma_3)\xi^2|\hat{q}|^2.
\end{multline}
Now, consider the following functional
\begin{equation*}
\LL_2(\xi,t)= \gamma_1\xi^2 J_1(\xi,t)+\gamma_2\dfrac{\xi^2}{1+\xi^2}J_2(\xi,t)+\gamma_3J_5(\xi,t) +  J_4(\xi,t).
\end{equation*}
From Lemma \ref{lem5}, applying Young inequality to \eqref{e33'} and \eqref{e20}, together with \eqref{ineq}, we have
\begin{multline}\label{e34}
\frac{d}{dt}\left\lbrace\gamma_1\frac{\xi^2}{1+\xi^2} J_1(\xi,t)+\gamma_2\dfrac{\xi^2}{(1+\xi^2)^2}J_2(\xi,t) + \gamma_3J_3(\xi,t) \right\rbrace + k\left[\gamma_1\tau(1-2\varepsilon)-C(\varepsilon)\gamma_2\right]\frac{\xi^2}{1+\xi^2}|\hat{v}|^2 \\
 +\rho_1(1-2\varepsilon)\gamma_2\dfrac{\xi^4}{(1+\xi^2)^2}|\hat{u}|^2  +a(1-2\varepsilon)\gamma_2\dfrac{\xi^4}{(1+\xi^2)^2}|\hat{z}|^2
 +\rho_2\left[\gamma_3b_0(1 -2\varepsilon)- C(\varepsilon,\gamma_1,\gamma_2)\right]\xi^2|\hat{y}|^2 \\ + \rho_3(1-2\varepsilon)\xi^2|\hat{\theta}|^2
\leq     C_1(1+b_0)\xi^2(1+\xi^2)^2\int_0^{\infty}g(s)|\hat{\eta}(s)|^2ds +     C_1(1+\xi^2)|\hat{q}|^2,
\end{multline}
where $C_1$ is a positive constant that depends on $\varepsilon$ and $\gamma_j$ for $j=1,2,3$. In order to make all the coefficients in the right side in \eqref{e34} positive, we have to choose appropriate constant $\gamma_i$. First, let us fix $\varepsilon$ such that $\varepsilon < \frac12.$ Thus, we can take any $\gamma_2 >0$ and
\begin{equation}\label{constant1'}
 \gamma_1 >\frac{C(\varepsilon)\gamma_2}{\tau(1-2\varepsilon)}, \quad \gamma_3 >\frac{C(\varepsilon,\gamma_1,\gamma_2)}{b_0(1-2\varepsilon)}.
\end{equation}
Then, from \eqref{trivial} and \eqref{constant1'}, we can deduce the existence of a positive constant $M_1$ such that
\begin{multline}\label{e35}
\frac{d}{dt}\LL_2(\xi,t) \leq -M_1 \frac{\xi^4}{(1+\xi^2)^2}\left\lbrace k|\hat{v}|^2 +\rho_1|\hat{u}|^2
+a|\hat{z}|^2 +\rho_2|\hat{y}|^2+\rho_3|\hat{\theta}|^2\right\rbrace \\
 + C_1(1+\xi^2)^2\xi^2\int_0^{\infty}g(s)|\hat{\eta}(s)|^2ds  +C_1(1+\xi^2)^2|\hat{q}|^2.
 \end{multline}
Finally, we define the following Lyapunov functional:
\begin{equation*}
\LL(\xi,t) = \LL_2(\xi,t,t)+N(1+\xi^2)^2\hat{E}(\xi,t),
\end{equation*}
where $N$ is a positive constant to be fixed later. Note that the definition of $\LL_2$ together with inequality \eqref{ineq} imply that
\begin{align*}
\left|\LL_2(\xi,t)\right| &\leq M_2 \left\lbrace |J_1(\xi,t)|+|J_2(\xi,t)|+|J_3(\xi,t)|+ |J_4(\xi,t)|\right\rbrace \\
& \leq M_2 (1+\xi^2)^2\hat{E}(\xi,t).
\end{align*}
Hence, we obtain
\begin{equation}\label{e36}
(N-M_2)(1+\xi^2)^2\hat{E}(\xi,t) \leq \LL(\xi,t)\leq (N+M_2)(1+\xi^2)^2 \hat{E}(\xi,t).
\end{equation}
On the other hand, taking the derivative of $\LL$ with respect to $t$ and using the estimates \eqref{e35} and Lemma \ref{lem1}, it follows that
\begin{multline*}
\frac{d}{dt}\LL(\xi,t) \leq -M_1 \frac{\xi^4}{(1+\xi^2)^2}\left\lbrace k|\hat{v}|^2 +\rho_1|\hat{u}|^2
+a|\hat{z}|^2 + \rho_2|\hat{y}|^2+\rho_3|\hat{\theta}|^2\right\rbrace  \\
 -\left(2N\beta-C_1\right)(1+\xi^2)^2|\hat{q}|^2 -\left( Nk_1 - C_1 (1+b_0)\right)(1+\xi^2)^2\xi^2\int_0^{\infty}g(s)|\hat{\eta}(s)|^2ds.
\end{multline*}
Now, choosing $N$ such that $N\geq \max\left\lbrace M_2,\dfrac{C_1}{2\beta}, \dfrac{C_1(1+b_0)}{k_1m}\right\rbrace$ and using the inequality $(1+\xi^2)^2\geq \dfrac{\xi^4}{(1+\xi^2)^2}$, there exists  a positive constant $M_3$ such that
\begin{equation*}
\frac{d}{dt}\LL(\xi,t) \leq - M_3\frac{\xi^4}{(1+\xi^2)^2} \hat{E}(\xi,t).
\end{equation*}
$(\ref{e36})$ implies that
\begin{equation*}
\frac{d}{dt}\LL(\xi,t) \leq - \Gamma\frac{\xi^4}{(1+\xi^2)^4} \LL(\xi,t)
\end{equation*}
where $\Gamma=\dfrac{M_3}{N+M_2}$. By Gronwall's inequality, it follows that
\begin{equation*}
\LL(\xi,t) \leq \LL(\xi,0) e^{-\Gamma \rho(\xi)t}, \qquad \rho(\xi)=\frac{\xi^4}{(1+\xi^2)^4},
\end{equation*}
again by using $(\ref{e36})$, we have that
\begin{align*}
\hat{E}(\xi,t) \leq C \hat{E}(\xi,0) e^{-\Gamma \rho(\xi)t}, \quad \text{where}\quad C= \frac{N+M_2}{N-M_2}> 0.
\end{align*}

\end{proof}

%%%%%%%%%%%%%%%%%%%%%%%%%%%%%%%%%%%%%%%%%%%%%%%%%%%%%%%%%%%%%%%%%%%%%%%%%%%%%%%%%%%%%%%%%%%%%%%%%%%%
%%%%%%%%%%%%%%%%%%%%%%%%%%%%%%%%%%%%%%%%%%%%%%%%%%%%%%%%%%%%%%%%%%%%%%%%%%%%%%%%%%%%%%%%%%%%%%%%%%%%

\subsection{The Timoshenko-Fourier Law}
Similarly to the previous case, taking Fourier transform in (\ref{system4}), we obtain the following integro differential system
\begin{align}
&\hat{v}_t-i\xi\hat{u}+\hat{y}=0, \label{eq1}\\
&\rho_1\hat{u}_t-ik\xi\hat{v}=0, \label{eq2}\\
&\hat{z}_t-i\xi\hat{y} =0, \label{eq3}\\
&\rho_2\hat{y}_t -ia\xi\hat{z}+m\xi^2\displaystyle\int_0^{\infty}g(s)\hat{\eta}(s)ds -k\hat{v}+i\delta\xi\hat{\theta}=0,\label{eq4}\\
&\rho_3\hat{\theta}_t+\tilde{\beta}\xi^2\hat{\theta}+i\delta\xi\hat{y}=0,\label{eq5}\\
&\hat{\eta}_t+\hat{\eta}_s-\hat{y}=0, \label{eq6}
\end{align}
where the solution vector and initial data are given by $\hat{V}(\xi,t)=(\hat{v},\hat{u},\hat{z},\hat{y},\hat{\theta},\hat{q},\hat{\eta})^{T}$ and $\hat{V}(\xi,0)=\hat{V}_0(\xi)$, respectively. Furthermore, the energy functional associated to the above system is defined as
\begin{equation}\label{energy2}
\hat{\E}\left (t, \xi\right) = \rho_1|\hat{u}|^{2}+\rho_2|\hat{y}|^{2}+\rho_3 |\hat{\theta}|^{2}+k|\hat{v} |^{2}+a|\hat{z} |^{2} +m\xi^2\int_{0}^{\infty}g(s)|\hat{\eta}(s)|^2ds.
\end{equation}

%%%%%%%%%%%%%%%%%%%%%%%%%%%%6%%%%%%%%%%%%%%%%%%%%%%%%%%%%%%%%%%%%%%%%%%%%%%%
\begin{lem}\label{lem1'}
The energy of the system \eqref{eq1}-\eqref{eq6}, satisfies
\begin{gather}\label{energyfourier}
\frac{d}{dt}\hat{\E}(\xi,t)\leq -2\tilde{\beta}\xi^2 |\hat{\theta}|^2 -k_1 m \xi^2\int_0^{\infty}g(s)|\hat{\eta}(s)|^2ds,
\end{gather}
where the constant $k_1>0$ is given by $(H_2)$.
\end{lem}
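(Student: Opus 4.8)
The plan is to reproduce the dissipation argument of Lemma~\ref{lem1}, the only structural change being that under the Fourier law the damping enters \eqref{eq5} through the algebraic term $\tilde{\beta}\xi^2\hat{\theta}$ rather than through the flux variable $\hat{q}$. First I would form the basic energy identity in the frequency space: multiply \eqref{eq1} by $k\overline{\hat{v}}$, \eqref{eq2} by $\overline{\hat{u}}$, \eqref{eq3} by $a\overline{\hat{z}}$, \eqref{eq4} by $\overline{\hat{y}}$ and \eqref{eq5} by $\overline{\hat{\theta}}$, then add the five resulting identities and take real parts. The time-derivative contributions assemble into $\tfrac12\frac{d}{dt}\{k|\hat{v}|^2+\rho_1|\hat{u}|^2+a|\hat{z}|^2+\rho_2|\hat{y}|^2+\rho_3|\hat{\theta}|^2\}$, i.e. $\tfrac12\frac{d}{dt}$ of the non-memory part of the energy \eqref{energy2}. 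The coupling cross-terms cancel in pairs: the two $ik\xi$ terms linking $\hat{u}$ and $\hat{v}$ cancel because $Re(i\xi\hat{u}\overline{\hat{v}})=-Re(i\xi\overline{\hat{u}}\hat{v})$, and likewise the two $ia\xi$ terms linking $\hat{y}$ and $\hat{z}$; the terms $k\,Re(\hat{y}\overline{\hat{v}})$ and $-k\,Re(\hat{v}\overline{\hat{y}})$ cancel since conjugate products have equal real parts; and the two $\delta$-terms linking $\hat{\theta}$ and $\hat{y}$ cancel by the same conjugation identity. What survives is the dissipative contribution $\tilde{\beta}\xi^2|\hat{\theta}|^2$ together with the memory coupling $Re\big(m\xi^2\int_0^{\infty}g(s)\hat{\eta}(s)\,ds\,\overline{\hat{y}}\big)$.

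Next I would dispose of the memory coupling exactly as in the derivation of \eqref{e9}. Taking the conjugate of \eqref{eq6}, multiplying by $g(s)\hat{\eta}(s)$ and integrating in $s$, and using $Re(\hat{\eta}\overline{\hat{\eta}_t})=\tfrac12\partial_t|\hat{\eta}|^2$ and $Re(\hat{\eta}\overline{\hat{\eta}_s})=\tfrac12\partial_s|\hat{\eta}|^2$, one obtains after an integration by parts in $s$
\[ Re\Big(\int_0^{\infty}g(s)\hat{\eta}(s)\overline{\hat{y}}\,ds\Big)=\frac12\frac{d}{dt}\int_0^{\infty}g(s)|\hat{\eta}(s)|^2ds-\frac12\int_0^{\infty}g'(s)|\hat{\eta}(s)|^2ds, \]
where the boundary contributions vanish thanks to the compatibility condition $\hat{\eta}(\cdot,0,\cdot)=0$ at $s=0$ and the integrability of $g$ in the weighted history space $L^2_g$ at $s=\infty$. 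Multiplying this identity by $m\xi^2$ and transferring the memory time-derivative to the left-hand side reconstitutes the full energy $\hat{\E}$ of \eqref{energy2}, yielding
\[ \frac{d}{dt}\hat{\E}(\xi,t)=-2\tilde{\beta}\xi^2|\hat{\theta}|^2+m\xi^2\int_0^{\infty}g'(s)|\hat{\eta}(s)|^2ds. \]
Invoking hypothesis $(H_2)$ to bound $g'(s)$ by a negative multiple of $g(s)$ then gives the claimed estimate \eqref{energyfourier}.

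I do not expect a genuine obstacle here, since the computation is parallel to Lemma~\ref{lem1} with $\beta|\hat{q}|^2$ replaced by the frequency-weighted parabolic dissipation $\tilde{\beta}\xi^2|\hat{\theta}|^2$. The only points requiring care are the sign bookkeeping in the pairwise cancellation of the four coupling terms (each resting on the elementary identity $Re(i\xi\,a\overline{b})=-Re(i\xi\,\overline{a}b)$ for the imaginary couplings and on $Re(a\overline{b})=Re(\overline{a}b)$ for the real one) and the justification that the boundary terms in the $s$-integration by parts vanish in the present setting; both are routine once the history-space framework of Section~\ref{subsecF} is used.
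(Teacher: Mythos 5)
Your proposal is correct and follows essentially the same route as the paper: the same multiplier choices for \eqref{eq1}--\eqref{eq5}, the same treatment of the memory coupling via the conjugated equation \eqref{eq6} and an integration by parts in $s$, and the same final appeal to $(H_2)$. Your sign for the resulting term $+m\xi^2\int_0^\infty g'(s)|\hat{\eta}(s)|^2\,ds$ is in fact the consistent one (matching the Cattaneo computation in Lemma \ref{lem1}), and your explicit justification of the vanishing boundary terms at $s=0$ and $s=\infty$ is a point the paper leaves implicit.
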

%%%%%%%%%%%%%%%%%%%%%%%%%%%%%%%%%%%%%%%%%%%%%%%%%%%%%%%%%%%%%%%%%%%%%%%%%%%%%%%
\begin{proof}
Multiplying \eqref{eq1} by $k\overline{\hat{v}}$,  \eqref{eq2} by $\overline{\hat{u}}$, \eqref{eq3}  by $a\overline{\hat{z}}$,  \eqref{eq4}  by $\overline{\hat{y}}$ and \eqref{eq5} by $\overline{\hat{\theta}}$, adding and  taking real part, it follows that
\begin{equation}\label{eq8}
\frac{1}{2}\frac{d}{dt}\left\lbrace \rho_1|\hat{u}|^{2}+\rho_2|\hat{y}|^{2}+\rho_3 |\hat{\theta}|^{2}+k|\hat{v} |^{2}+a|\hat{z} |^{2}\right\rbrace=-\tilde{\beta}\xi^2 |\hat{\theta}|^2 -Re\left(m \xi^2\int_0^{\infty} g(s)\eta(s)\overline{\hat{y}}ds\right).
\end{equation}
On the other hand, taking the conjugate of equation \eqref{eq6}, multiplying the resulting equation by $g(s)\hat{\eta}(t,s,x)$ and integrating with respect to $s$, we obtain
\begin{equation}\label{eq9}
\int_0^{\infty}g(s)\hat{\eta}(s)\overline{\hat{y}}ds=\frac{1}{2}\frac{d}{dt}\int_0^{\infty}g(s)|\hat{\eta}(s)|^2ds +\frac{1}{2}\int_0^{\infty}g(s)\frac{d}{ds}|\hat{\eta}(s)|^2ds.
\end{equation}
Integrating by parts the last term in the left side of \eqref{eq9} and substituting in \eqref{eq8}, it follows that
\begin{equation*}
\frac{d}{dt}\hat{E}(\xi,t)= -2\tilde{\beta}\xi^2 |\hat{\theta}|^2 -m\xi^2\int_0^{\infty}g'(s)|\eta(s)|^2ds.
\end{equation*}
By using $(H_2)$, we obtain \eqref{energyfourier}.
\end{proof}

Since the energy of the system \eqref{eq1}-\eqref{eq6} is dissipative, we expect the exponential decay like in the previous subsection. The principal result of this subsection reads as follows:

\begin{thm}\label{teo2}
Let
\begin{equation*}
\chi_{0}= \left(\rho_2-\dfrac{b\rho_1}{k}\right).
\end{equation*}
Then, for any $t \geq 0$ and $\xi \in \R$, we obtain the following decay rates for the energy of the system \eqref{eq1}-\eqref{eq6}:
\begin{equation*}
\hat{\E}(\xi,t) \leq C e^{-\lambda \rho(\xi)}\hat{\E}(0,\xi),
\end{equation*}
where $C, \lambda$ are positive constants and the function $\rho(\cdot)$ is given by
\begin{equation}\label{eq23''}
\rho(\xi) = \begin{cases}
\dfrac{\xi^4}{(1+\xi^2)^3} & \text{if $\chi_{0}=0$}, \\
\\
\dfrac{\xi^4}{(1+\xi^2)^4}  & \text{if $\chi_{0}\neq 0$}.
\end{cases}
\end{equation}
\end{thm}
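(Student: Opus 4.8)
The proof will follow the blueprint of Theorem \ref{teo1}, exploiting that the Fourier system is the formal limit $\tau=0$ and is in fact structurally simpler. The decisive difference is that the energy identity of Lemma \ref{lem1'} already delivers the dissipative terms $\tilde{\beta}\xi^2|\hat{\theta}|^2$ and $k_1 m\xi^2\int_0^{\infty}g(s)|\hat{\eta}(s)|^2ds$ \emph{directly}. Consequently the purely thermal functional $J_4$ of the Cattaneo proof (whose only role was to manufacture $\xi^2|\hat{\theta}|^2$ out of the $\hat{\theta}$--$\hat{q}$ coupling) is no longer needed: the cross terms of the form $\xi^2 Re(\hat{\theta}\overline{\hat{u}})$ produced by the parabolic equation \eqref{eq5} are absorbed immediately by Young's inequality against the free dissipation $\tilde{\beta}\xi^2|\hat{\theta}|^2$. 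What remains is to recover the dissipation of the undamped components $\hat{v},\hat{u},\hat{z},\hat{y}$.

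First I would construct three auxiliary functionals mirroring the shear functional $J_1$ together with Lemmas \ref{lem3} and \ref{lem4}. (i) A functional $J_1^F=-\rho_2 Re(\hat{v}\overline{\hat{y}})-\tfrac{a\rho_1}{k}Re(\hat{z}\overline{\hat{u}})+\tfrac{\delta\rho_1}{k}Re(\hat{\theta}\overline{\hat{u}})$, obtained by testing \eqref{eq1} with $-\rho_2\overline{\hat{y}}$, \eqref{eq4} with $-\overline{\hat{v}}$, \eqref{eq2} with $-\tfrac{a}{k}\overline{\hat{z}}$ and with $\tfrac{\delta}{k}\overline{\hat{\theta}}$, \eqref{eq3} with $-\tfrac{a\rho_1}{k}\overline{\hat{u}}$, and \eqref{eq5} with $\tfrac{\delta\rho_1}{\rho_3 k}\overline{\hat{u}}$; here the $\hat{v}$--$\hat{z}$ and $\hat{v}$--$\hat{\theta}$ cross terms cancel, leaving $k|\hat{v}|^2$ on the left together with a residual coupling term of the form $Re(i\xi\overline{\hat{u}}\hat{y})$ on the right. (ii) The functional $J_2^F=\rho_1 Re(i\xi\hat{v}\overline{\hat{u}})+\rho_2 Re(i\xi\hat{y}\overline{\hat{z}})$, i.e. the functional of Lemma \ref{lem3} with the $\delta\tau$--term deleted, producing $\rho_1\xi^2|\hat{u}|^2+a\xi^2|\hat{z}|^2$; the spurious $\delta Re(\xi^2\hat{\theta}\overline{\hat{z}})$ that previously required the $\hat{q}$--manipulation \eqref{n6} is now simply controlled by Young against $\tilde{\beta}\xi^2|\hat{\theta}|^2$. (iii) The history functional $J_3$ of Lemma \ref{lem4} verbatim, yielding $\rho_2 b_0(1-\varepsilon)\xi^2|\hat{y}|^2$.

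The crux of the argument is the coefficient of the residual term $Re(i\xi\overline{\hat{u}}\hat{y})$ coming from $J_1^F$. Tracking the instantaneous modulus $a=b-b_0$, the history contribution reinstated by $J_3$, and the parabolic coupling $\delta^2\rho_1/(\rho_3 k)$, I expect this coefficient to organize into a multiple of $\chi_0=\rho_2-b\rho_1/k$, exactly as $\chi_{0,\tau}$ emerged in Lemma \ref{lem2} (consistently with the identity $\chi_{0,\tau}|_{\tau=0}=-\tfrac{\rho_1}{\rho_3 k}\chi_0$). Confirming this algebraic reduction, in which the full modulus $b=a+b_0$ and the thermal coupling must combine correctly so that the term genuinely vanishes when $\chi_0=0$, is the step I expect to require the most care and is the real obstacle: if the coefficient failed to be proportional to $\chi_0$, the high-frequency weight would degrade even in the balanced case. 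Once this is settled the case analysis is routine. For $\chi_0=0$ the coupling term disappears and one forms $\LL_1=\lambda_1\xi^2 J_1^F+\lambda_2\tfrac{\xi^2}{1+\xi^2}J_2^F+\lambda_3 J_3$; for $\chi_0\neq0$ it survives and must be absorbed by Young's inequality into $\tfrac{\xi^2}{1+\xi^2}|\hat{u}|^2$ at the price of an extra $(1+\xi^2)|\hat{y}|^2$, forcing the heavier weights $\LL_2=\gamma_1\tfrac{\xi^2}{1+\xi^2}J_1^F+\gamma_2\tfrac{\xi^2}{(1+\xi^2)^2}J_2^F+\gamma_3 J_3$.

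In either case I would then set $\LL(\xi,t)=\LL_i(\xi,t)+N(1+\xi^2)^2\hat{\E}(\xi,t)$ with $N$ large, use \eqref{ineq} and \eqref{trivial} to establish the equivalence $\LL\sim(1+\xi^2)^2\hat{\E}$, differentiate in $t$, and invoke Lemma \ref{lem1'}: the dissipation $N(1+\xi^2)^2\big[\tilde{\beta}\xi^2|\hat{\theta}|^2+k_1 m\xi^2\int_0^{\infty}g(s)|\hat{\eta}(s)|^2ds\big]$ dominates, for $N$ sufficiently large, the history and thermal remainders produced on the right-hand sides by $J_1^F,J_2^F,J_3$. This gives $\tfrac{d}{dt}\LL(\xi,t)\leq-M_3\tfrac{\xi^4}{(1+\xi^2)^2}\hat{\E}(\xi,t)$, hence $\tfrac{d}{dt}\LL(\xi,t)\leq-\Gamma\rho(\xi)\LL(\xi,t)$ with $\rho(\xi)=\xi^4/(1+\xi^2)^3$ when $\chi_0=0$ and $\rho(\xi)=\xi^4/(1+\xi^2)^4$ when $\chi_0\neq0$. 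Gronwall's inequality together with the equivalence $\LL\sim(1+\xi^2)^2\hat{\E}$ then yields the stated decay \eqref{eq23''} for $\hat{\E}$.
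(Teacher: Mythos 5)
Your overall architecture is exactly the paper's: drop the $J_4$-type thermal functional because Lemma \ref{lem1'} already supplies $\tilde{\beta}\xi^2|\hat{\theta}|^2$ for free, recover $|\hat{v}|^2$, $\xi^2|\hat{u}|^2+\xi^2|\hat{z}|^2$ and $\xi^2|\hat{y}|^2$ from three functionals of the shapes you describe, split into the cases $\chi_0=0$ and $\chi_0\neq 0$ with the weights $\xi^2,\ \tfrac{\xi^2}{1+\xi^2},\ 1$ versus $\tfrac{\xi^2}{1+\xi^2},\ \tfrac{\xi^2}{(1+\xi^2)^2},\ 1$, add $N(1+\xi^2)^2\hat{\E}$, and close with Gronwall. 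However, the step you yourself single out as the crux fails for the functional you actually wrote down. With
$J_1^F=-\rho_2\,Re(\hat{v}\overline{\hat{y}})-\tfrac{a\rho_1}{k}Re(\hat{z}\overline{\hat{u}})+\tfrac{\delta\rho_1}{k}Re(\hat{\theta}\overline{\hat{u}})$,
i.e. taking the $\hat{\theta}$--$\hat{u}$ identity \eqref{eqqq2} with unit weight, the coefficient of the residual term $Re(i\xi\overline{\hat{u}}\hat{y})$ is
$\rho_2-\tfrac{a\rho_1}{k}-\tfrac{\delta^2\rho_1}{\rho_3 k}=\chi_0+\tfrac{\rho_1}{k}\bigl(b_0-\tfrac{\delta^2}{\rho_3}\bigr)$,
which is \emph{not} a multiple of $\chi_0$: when $\chi_0=0$ it generically does not vanish, so the balanced case would be forced down to the weight $\xi^4/(1+\xi^2)^4$ and the claimed rate $\xi^4/(1+\xi^2)^3$ in \eqref{eq23''} would not be reached by your construction. (Your heuristic $\chi_{0,\tau}|_{\tau=0}=-\tfrac{\rho_1}{\rho_3 k}\chi_0$ is correct as an identity, but it does not transfer to the functional: setting $\tau=0$ in $J_1$ of Lemma \ref{lem2} kills the $\hat{v}\overline{\hat{y}}$ and $\hat{z}\overline{\hat{u}}$ blocks entirely, so the Fourier functional cannot be obtained as a limit of the Cattaneo one.)

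The paper's fix (Lemma \ref{lem2'}) is to weight the identity \eqref{eqqq2} by $\tfrac{b_0\rho_3}{\delta^2}$ instead of $1$, i.e. to take $K_1=-\rho_2\,Re(\hat{v}\overline{\hat{y}})-\tfrac{a\rho_1}{k}Re(\hat{z}\overline{\hat{u}})+\tfrac{b_0\rho_1\rho_3}{k\delta}Re(\hat{\theta}\overline{\hat{u}})$. This converts the parabolic contribution $\tfrac{\delta^2\rho_1}{\rho_3 k}$ into exactly $\tfrac{b_0\rho_1}{k}$, so that it combines with $\tfrac{a\rho_1}{k}$ to reconstitute $\tfrac{b\rho_1}{k}$ and the residual coefficient is precisely $\chi_0$; the price is that the $\hat{\theta}$--$\hat{v}$ cross terms no longer cancel, leaving the harmless $(\delta-\tfrac{b_0\rho_3}{\delta})Re(i\xi\hat{\theta}\overline{\hat{v}})$ and a term $\tfrac{b_0\tilde{\beta}\rho_1}{k\delta}\xi^2|\hat{\theta}||\hat{u}|$, both absorbed by Young against the free thermal dissipation. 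With this one correction the rest of your proposal goes through as written and coincides with the paper's proof.
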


Similarly to for Cattaneo's law, we need establish some preliminary results.

%%%%%%%%%%%%%%%%%%%%%%%%%%%%%%%%%%%%%%%%%%%%%%%%%%%%%%%%%%%%%%%%%%%%%%%%%%%%%%%%%%%%%%%%%%%%%
\begin{lem}\label{lem2'}
Consider the functional
\begin{equation*}
K_1(\xi,t)=-\rho_2Re(\hat{v}\overline{\hat{y}})-\frac{a\rho_1}{k}Re(\hat{z}\overline{\hat{u}}) +\frac{b_0\rho_1\rho_3}{k\delta}Re(\hat{\theta}\overline{\hat{u}}).
\end{equation*}
Then, for any $\varepsilon>0$, $K_1$ satisfies
\begin{align}\label{eq10'}
\frac{d}{dt}K_1(\xi,t) + k(1-\varepsilon)|\hat{v}|^2 \leq \rho_2 |\hat{y}|^2 + \chi_{0} Re\left(i\xi\overline{\hat{u}}\hat{y}\right)+C(\varepsilon)\xi^2|\hat{\theta}|^2+C(\varepsilon)\xi^4\left|\int_0^{\infty}g(s)\hat{\eta}(s)ds\right|^2+ \frac{b_0\tilde{\beta}\rho_1}{k \delta } \xi^2|\hat{\theta}||\hat{u}|,
\end{align}
for $t\geq0$, where $C(\varepsilon)$ is a positive constant and $\chi_{0}= \left(\rho_2-\dfrac{b\rho_1}{k}\right)$.
\end{lem}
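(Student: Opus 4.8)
The plan is to reproduce, for the Fourier system \eqref{eq1}--\eqref{eq6}, the multiplier scheme used for $J_1$ in Lemma \ref{lem2}: each of the three products defining $K_1$ will be generated by differentiating in $t$ a suitable bilinear pairing of two of the equations, and then the three resulting identities are summed so that the only surviving positive coefficient on $|\hat{v}|^2$ is $k$. The structure is completely parallel to the Cattaneo case, the only new feature being that the heat flux $\hat{q}$ is replaced by the parabolic term $\tilde{\beta}\xi^2\hat{\theta}$ in \eqref{eq5}.

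First I would generate the $|\hat{v}|^2$ term. Multiplying \eqref{eq1} by $-\rho_2\overline{\hat{y}}$ and \eqref{eq4} by $-\overline{\hat{v}}$, taking real parts and adding, the time derivatives combine into $-\rho_2\frac{d}{dt}Re(\hat{v}\overline{\hat{y}})$ while the coupling $-k\hat{v}$ in \eqref{eq4} yields $+k|\hat{v}|^2$; the right-hand side then carries $\rho_2|\hat{y}|^2$, the memory contribution $Re(m\xi^2\overline{\hat{v}}\int_0^\infty g(s)\hat{\eta}(s)\,ds)$, the cross term $\rho_2 Re(i\xi\overline{\hat{u}}\hat{y})$ (after using $-Re(i\xi\hat{u}\overline{\hat{y}})=Re(i\xi\overline{\hat{u}}\hat{y})$), the wave coupling $-aRe(i\xi\hat{z}\overline{\hat{v}})$, and the thermal coupling $\delta Re(i\xi\hat{\theta}\overline{\hat{v}})$. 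Next I would cancel the unwanted $\hat{z}$--$\hat{v}$ term: multiplying \eqref{eq2} by $-\frac{a}{k}\overline{\hat{z}}$ and \eqref{eq3} by $-\frac{a\rho_1}{k}\overline{\hat{u}}$ and adding gives $-\frac{a\rho_1}{k}\frac{d}{dt}Re(\hat{z}\overline{\hat{u}})$ on the left and $-\frac{a\rho_1}{k}Re(i\xi\hat{y}\overline{\hat{u}})+aRe(i\xi\hat{z}\overline{\hat{v}})$ on the right, whose last term exactly cancels the $-aRe(i\xi\hat{z}\overline{\hat{v}})$ produced above. Finally, to create the third product I would multiply \eqref{eq2} by $\frac{b_0\rho_3}{k\delta}\overline{\hat{\theta}}$ and \eqref{eq5} by $\frac{b_0\rho_1}{k\delta}\overline{\hat{u}}$; adding these yields $\frac{b_0\rho_1\rho_3}{k\delta}\frac{d}{dt}Re(\hat{\theta}\overline{\hat{u}})$ together with the parabolic remainder $-\frac{b_0\rho_1\tilde{\beta}\xi^2}{k\delta}Re(\hat{\theta}\overline{\hat{u}})$, a further copy $-\frac{b_0\rho_1}{k}Re(i\xi\hat{y}\overline{\hat{u}})$ of the $\hat{u}$--$\hat{y}$ cross term, and the thermal term $-\frac{b_0\rho_3}{\delta}Re(i\xi\hat{\theta}\overline{\hat{v}})$.

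Summing the three identities leaves $\frac{d}{dt}K_1+k|\hat{v}|^2$ on the left. The $\hat{z}$--$\hat{v}$ contributions cancel as noted, and the three copies of $Re(i\xi\overline{\hat{u}}\hat{y})$ collect with coefficient $\rho_2-\frac{a\rho_1}{k}-\frac{b_0\rho_1}{k}$; the key algebraic step is to invoke $(H_3)$ in the form $a+b_0=b$, which collapses this coefficient to exactly $\chi_0=\rho_2-\frac{b\rho_1}{k}$. The two thermal cross terms combine into $(\delta-\frac{b_0\rho_3}{\delta})Re(i\xi\hat{\theta}\overline{\hat{v}})$, and the parabolic remainder $-\frac{b_0\rho_1\tilde{\beta}\xi^2}{k\delta}Re(\hat{\theta}\overline{\hat{u}})$ survives. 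To close, I would apply Young's inequality to the memory term (bounded by $\varepsilon k|\hat{v}|^2+C(\varepsilon)\xi^4|\int_0^\infty g(s)\hat{\eta}(s)\,ds|^2$) and to the $Re(i\xi\hat{\theta}\overline{\hat{v}})$ term (bounded by $\varepsilon k|\hat{v}|^2+C(\varepsilon)\xi^2|\hat{\theta}|^2$), absorbing the $\varepsilon$-small $|\hat{v}|^2$ pieces into the left-hand coefficient to obtain $k(1-\varepsilon)|\hat{v}|^2$; the parabolic remainder is simply estimated by $\frac{b_0\tilde{\beta}\rho_1}{k\delta}\xi^2|\hat{\theta}||\hat{u}|$ and deliberately left unsplit, since $|\hat{u}|$ will be controlled by another functional in the proof of Theorem \ref{teo2}. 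There is no serious obstacle: the computation mirrors Lemma \ref{lem2}, so the only points demanding care are the sign bookkeeping in the two cross-term cancellations, the precise use of $a+b_0=b$ that reproduces $\chi_0$ exactly, and the decision to retain rather than absorb the term $\frac{b_0\tilde{\beta}\rho_1}{k\delta}\xi^2|\hat{\theta}||\hat{u}|$.
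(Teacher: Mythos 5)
Your proposal is correct and follows essentially the same route as the paper: the same three pairings of equations (\eqref{eq1} with \eqref{eq4}, \eqref{eq2} with \eqref{eq3}, and \eqref{eq2} with \eqref{eq5}), the same cancellation of the $\hat{z}$--$\hat{v}$ cross terms, the same use of $a+b_0=b$ to produce the coefficient $\chi_0$, and the same final Young estimates leaving the term $\frac{b_0\tilde{\beta}\rho_1}{k\delta}\xi^2|\hat{\theta}||\hat{u}|$ unsplit. The only cosmetic difference is that you fold the weight $\frac{b_0\rho_3}{\delta^2}$ directly into the multipliers of \eqref{eq2} and \eqref{eq5}, whereas the paper first derives the identity with multiplier $\frac{\delta}{k}\overline{\hat{\theta}}$ and rescales afterwards.
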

%%%%%%%%%%%%%%%%%%%%%%%%%%%%%%%%%%%%%%%%%%%%%%%%%%%%%%%%%%%%%%%%%%%%%%%%%%%%%%%%%
\begin{proof}
Multiplying \eqref{eq1} by $-\rho_2 \overline{\hat{y}}$ and taking real part, we obtain
\begin{align*}
-\rho_2Re\left(\hat{v}_t\overline{\hat{y}}\right) + \rho_2Re\left(i\xi\hat{u}\overline{\hat{y}}\right)-\rho_2|\hat{y}|^2=0.
\end{align*}
Multiplying \eqref{eq4} by $- \overline{\hat{v}}$ and taking real part, it follows that
\begin{align*}
-\rho_2Re\left(\hat{y}_t\overline{\hat{v}}\right) + aRe\left(i\xi\hat{z}\overline{\hat{v}}\right)+k|\hat{v}|^2- mRe\left(\xi^2\overline{\hat{v}}\int_0^{\infty}g(s)\hat{\eta}(s)ds\right)-\delta Re\left(i\xi\hat{\theta}\overline{\hat{v}}\right)=0.
\end{align*}
Adding the above identities,
\begin{multline}\label{eq11}
-\rho_2\frac{d}{dt}Re\left(\hat{v}\overline{\hat{y}}\right)+k|\hat{v}|^2=\rho_2|\hat{y}|^2 - aRe\left(i\xi\hat{z}\overline{\hat{v}}\right)
+ m Re\left(\xi^2\overline{\hat{v}}\int_0^{\infty}g(s)\hat{\eta}(s)ds\right) -\rho_2Re\left(i\xi\hat{u}\overline{\hat{y}}\right)+\delta Re\left(i\xi\hat{\theta}\overline{\hat{v}}\right).
\end{multline}
On the other hand, multiplying \eqref{eq2} by $-\frac{a}{k}\overline{\hat{z}}$, \eqref{eq3} by $-\frac{a\rho_1}{k}\overline{\hat{u}}$, adding the results and taking real part, it follows that
\begin{equation}\label{eq12}
-\frac{a\rho_1}{k}\frac{d}{dt}Re\left(\hat{z}\overline{\hat{u}}\right) = -\frac{a\rho_1}{k}Re\left(i\xi\hat{y}\overline{\hat{u}}\right)-aRe\left(i\xi\hat{v}\overline{\hat{z}}\right).
\end{equation}
Adding \eqref{eq11} and \eqref{eq12}, we have
\begin{multline}\label{eqqq1}
\frac{d}{dt}\left\lbrace -\rho_2Re\left(\hat{v}\overline{\hat{y}}\right) -\frac{a\rho_1}{k}Re\left(\hat{z}\overline{\hat{u}}\right) \right\rbrace +k|\hat{v}|^2 =  \rho_2|\hat{y}|^2 + m Re\left(\xi^2\overline{\hat{v}}\int_0^{\infty}g(s)\hat{\eta}(s)ds\right) \\
+\left(\rho_2-\frac{a\rho_1}{k}\right) Re\left(i\xi\hat{y}\overline{\hat{u}}\right)+\delta Re\left(i\xi\hat{\theta}\overline{\hat{v}}\right).
\end{multline}
Moreover, multiplying \eqref{eq2} by $\frac{\delta}{k}\overline{\hat{\theta}}$ and taking real part,
\begin{equation*}
\frac{\delta\rho_1}{k}Re(\hat{u}_t\overline{\hat{\theta}})-\delta Re(i\xi\hat{v}\overline{\hat{\theta}})=0.
\end{equation*}
Next, multiplying \eqref{eq5} by $\frac{\delta\rho_1}{\rho_3 k}\overline{\hat{u}}$ and taking real part,
\begin{equation*}
\frac{\delta\rho_1}{k}Re(\hat{\theta}_t\overline{\hat{u}})+\frac{\delta\tilde{\beta}\rho_1}{\rho_3 k}Re(\xi^2\hat{\theta}\overline{\hat{u}})+\frac{\delta^2\rho_1}{\rho_3 k}Re(i\xi\hat{y}\overline{\hat{u}})=0.
\end{equation*}
Adding the above identities, we obtain
\begin{equation}\label{eqqq2}
\frac{\delta\rho_1}{k}\frac{d}{dt}Re(\hat{\theta}\overline{\hat{u}})= -\frac{\delta\tilde{\beta}\rho_1}{\rho_3 k}Re(\xi^2\hat{\theta}\overline{\hat{u}})-\frac{\delta^2\rho_1}{\rho_3 k}Re(i\xi\hat{y}\overline{\hat{u}})+\delta Re(i\xi\hat{v}\overline{\hat{\theta}}).
\end{equation}
Computing $\eqref{eqqq1}+\frac{b_0\rho_3}{\delta^2}\eqref{eqqq2}$, it follows that
\begin{multline*}
\frac{d}{dt}\left\lbrace -\rho_2Re\left(\hat{v}\overline{\hat{y}}\right) -\frac{a\rho_1}{k}Re\left(\hat{z}\overline{\hat{u}}\right)+ \frac{b_0\rho_1\rho_3}{k\delta}Re(\hat{\theta}\overline{\hat{u}})\right\rbrace +k|\hat{v}|^2 =  \rho_2|\hat{y}|^2 + m Re\left(\xi^2\overline{\hat{v}}\int_0^{\infty}g(s)\hat{\eta}(s)ds\right) \\
+\left(\rho_2-\frac{a\rho_1}{k}-\frac{b_0\rho_1}{ k}\right) Re\left(i\xi\overline{\hat{u}}\hat{y}\right)+\left(\delta-\frac{b_0\rho_3}{\delta}\right) Re\left(i\xi\hat{\theta}\overline{\hat{v}}\right) -\frac{b_0\tilde{\beta}\rho_1}{k \delta }Re(\xi^2\hat{\theta}\overline{\hat{u}}).
\end{multline*}
Then,
\begin{multline*}
\frac{d}{dt}K_1(\xi,t) +k|\hat{v}|^2 \leq  \rho_2|\hat{y}|^2 + m \xi^2|\hat{v}|\left|\int_0^{\infty}g(s)\hat{\eta}(s)ds\right| +
\chi_0 Re\left(i\xi\overline{\hat{u}}\hat{y}\right)+\left|\delta-\frac{b_0\rho_3}{\delta}\right| |\xi| |\hat{\theta}||\hat{v}| +\frac{b_0\tilde{\beta}\rho_1}{k \delta } \xi^2|\hat{\theta}||\hat{u}|.
\end{multline*}
Applying Young's inequality, \eqref{eq10'} follows.
\end{proof}

%%%%%%%%%%%%%%%%%%%%%%%%%%%%%%%%%%%%%%%%%%%%%%%%%%%%%%%%%%%%%%%%%%%%%%%%%%%%%%%%%%%%%%%%%%%%%
\begin{lem}\label{lem3'}
Consider the functional
\begin{equation}\label{eq13}
K_2(\xi,t)=\rho_1Re\left(i\xi\overline{\hat{u}}\hat{v}\right) +\rho_2Re\left(i\xi\hat{y}\overline{\hat{z}}\right).
\end{equation}
For any $\varepsilon >0$, the estimate
\begin{multline}\label{eq14}
\frac{d}{dt}K_2(\xi,t) +\rho_1(1-\varepsilon)\xi^2|\hat{u}|^2+a(1-\varepsilon)\xi^2|\hat{z}|^2\leq C(\varepsilon)(1+\xi^2)|\hat{v}|^2 +C(\varepsilon)(1+\xi^2)|\hat{y}|^2 \\
+C(\varepsilon)\xi^2|\hat{\theta}|^2 +C(\varepsilon)\xi^4\left|\int_0^{\infty}g(s)\hat{\eta}(s)ds\right|^2
\end{multline}
is satisfied.
\end{lem}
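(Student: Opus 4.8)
The plan is to reproduce, in a slightly simpler form, the multiplier argument already used for the functional $J_2$ in Lemma \ref{lem3}: the functional $K_2$ is designed precisely to generate the dissipative quantities $\xi^2|\hat u|^2$ and $\xi^2|\hat z|^2$, and since the Fourier system \eqref{eq1}--\eqref{eq6} contains no heat-flux variable $\hat q$, the whole computation is the one for $J_2$ with the $\delta\tau Re(i\xi\hat z\overline{\hat q})$ piece and the associated identity suppressed. Concretely, I would derive a differential identity for each of the two terms of $K_2$ and then add them.

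First I would multiply \eqref{eq1} by $i\rho_1\xi\overline{\hat u}$ and \eqref{eq2} by $-i\xi\overline{\hat v}$, take real parts and add; using $Re(i\xi\hat v\overline{\hat u_t})=-Re(i\xi\hat u_t\overline{\hat v})$, the time derivatives recombine into $\rho_1\frac{d}{dt}Re(i\xi\hat v\overline{\hat u})$, while the transport term $-i\xi\hat u$ in \eqref{eq1} yields the wanted $\rho_1\xi^2|\hat u|^2$. This reproduces identity \eqref{e15} verbatim,
\begin{equation*}
\rho_1\frac{d}{dt}Re(i\xi\hat v\overline{\hat u})+\rho_1\xi^2|\hat u|^2=k\xi^2|\hat v|^2-\rho_1Re(i\xi\hat y\overline{\hat u}).
\end{equation*}
Next I would multiply \eqref{eq3} by $-i\rho_2\xi\overline{\hat y}$ and \eqref{eq4} by $i\xi\overline{\hat z}$, take real parts and add; here the elastic term $-ia\xi\hat z$ produces $a\xi^2|\hat z|^2$ and the remaining contributions ($-k\hat v$, the memory integral, and the thermal coupling $i\delta\xi\hat\theta$) survive as cross terms, giving exactly \eqref{e16},
\begin{equation*}
\rho_2\frac{d}{dt}Re(i\xi\hat y\overline{\hat z})+a\xi^2|\hat z|^2=\rho_2\xi^2|\hat y|^2+kRe(i\xi\hat v\overline{\hat z})-m Re\left(i\xi^3\overline{\hat z}\int_0^{\infty}g(s)\hat\eta(s)ds\right)+\delta Re(\xi^2\hat\theta\overline{\hat z}).
\end{equation*}

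Adding these two identities gives a clean expression for $\frac{d}{dt}K_2+\rho_1\xi^2|\hat u|^2+a\xi^2|\hat z|^2$ whose right-hand side consists of $\rho_2\xi^2|\hat y|^2$, $k\xi^2|\hat v|^2$ and the four cross terms $k|\xi||\hat v||\hat z|$, $m|\xi|^3|\hat z|\,\bigl|\int_0^{\infty}g\hat\eta\,ds\bigr|$, $\delta\xi^2|\hat\theta||\hat z|$ and $\rho_1|\xi||\hat y||\hat u|$. The last step is Young's inequality, applied so that each $|\hat z|$-factor is paired into $\tfrac{a\varepsilon}{\cdots}\xi^2|\hat z|^2$ and each $|\hat u|$-factor into $\tfrac{\rho_1\varepsilon}{\cdots}\xi^2|\hat u|^2$, which are absorbed on the left to yield the coefficients $a(1-\varepsilon)$ and $\rho_1(1-\varepsilon)$. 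There is no deep obstacle, but the one point I would watch is the thermal cross term: splitting it as $\delta\xi^2|\hat\theta||\hat z|=\delta(|\xi||\hat z|)(|\xi||\hat\theta|)$ keeps the companion factor as $\xi^2|\hat\theta|^2$ rather than $(1+\xi^2)|\hat\theta|^2$, which is exactly the form compatible with the Fourier dissipation $\tilde\beta\xi^2|\hat\theta|^2$ from Lemma \ref{lem1'}; similarly $m|\xi|^3|\hat z|\,\bigl|\int_0^{\infty}g\hat\eta\,ds\bigr|=m(|\xi||\hat z|)\bigl(\xi^2\bigl|\int_0^{\infty}g\hat\eta\,ds\bigr|\bigr)$ produces the $\xi^4$ memory term. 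Bounding $k\xi^2|\hat v|^2\le k(1+\xi^2)|\hat v|^2$ and $\rho_2\xi^2|\hat y|^2\le\rho_2(1+\xi^2)|\hat y|^2$ and collecting all constants into $C(\varepsilon)$ then gives \eqref{eq14}.
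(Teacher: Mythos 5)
Your proposal is correct and follows essentially the same route as the paper: the authors likewise multiply \eqref{eq1} by $i\rho_1\xi\overline{\hat u}$, \eqref{eq2} by $-i\xi\overline{\hat v}$, \eqref{eq3} by $-i\rho_2\xi\overline{\hat y}$ and \eqref{eq4} by $i\xi\overline{\hat z}$, add the resulting identities (which are indeed the $q$-free analogues of \eqref{e15}--\eqref{e16}), and close with Young's inequality exactly as you describe, including keeping the thermal cross term as $\xi^2|\hat\theta|^2$ and the memory term as $\xi^4\left|\int_0^{\infty}g(s)\hat\eta(s)\,ds\right|^2$.
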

%%%%%%%%%%%%%%%%%%%%%%%%%%%%%%%%%%%%%%%%%%%%%%%%%%%%%%%%%%%%%%%%%%%%%%%%%%%%%%%%%%%%%%%%%%%%%%

\begin{proof}
Multiplying \eqref{eq1} by $i\rho_1\xi \overline{\hat{u}}$ and taking real part,
\begin{align*}
\rho_1Re\left(i\xi\hat{v}_t\overline{\hat{u}}\right) +\rho_1\xi^2|\hat{u}|^2+\rho_1Re\left(i\xi\hat{y}\overline{\hat{u}}\right)=0.
\end{align*}
Multiplying \eqref{eq2} by $-i\xi \overline{\hat{v}}$ and taking real part,
\begin{align*}
-\rho_1Re\left(i\xi\hat{u}_t\overline{\hat{v}}\right) -k\xi^2|\hat{v}|^2=0.
\end{align*}
Adding the above identities, we obtain
\begin{align}\label{eq15}
\rho_1\frac{d}{dt}Re\left(i\xi\hat{v}\overline{\hat{u}}\right)+\rho_1\xi^2|\hat{u}|^2=k\xi^2|\hat{v}|^2 -\rho_1Re\left(i\xi\hat{y}\overline{\hat{u}}\right).
\end{align}
Moreover, multiplying \eqref{eq3} by $-i\rho_2\xi \overline{\hat{y}}$ and taking real part,
\begin{align*}
-\rho_2Re\left(i\xi\hat{z}_t\overline{\hat{y}}\right) -\rho_2\xi^2|\hat{y}|^2=0.
\end{align*}
Multiplying \eqref{eq4} by $i\xi \overline{\hat{z}}$ and taking real part,
\begin{align*}
\rho_2Re\left(i\xi\hat{y}_t\overline{\hat{z}}\right) +a\xi^2|\hat{z}|^2 -kRe\left(i\xi\hat{v}\overline{\hat{z}}\right)+ m Re\left(i\xi^3\overline{\hat{z}}\int_0^{\infty}g(s)\hat{\eta}(s)ds\right) -\delta Re\left(\xi^2\hat{\theta}\overline{\hat{z}}\right)=0.
\end{align*}
Adding the above identities,
\begin{equation}\label{eq16}
\rho_2\frac{d}{dt}Re\left(i\xi\hat{y}\overline{\hat{z}}\right) +a\xi^2|\hat{z}|^2= \rho_2\xi^2|\hat{y}|^2+kRe\left(i\xi\hat{v}\overline{\hat{z}}\right)- m Re\left(i\xi^3\overline{\hat{z}}\int_0^{\infty}g(s)\hat{\eta}(s)ds\right) +\delta Re\left(\xi^2\hat{\theta}\overline{\hat{z}}\right).
\end{equation}
Therefore, computing \eqref{eq15} $+$ \eqref{eq16}, it follows that
\begin{multline*}
\frac{d}{dt}K_2(\xi,t) +\rho_1\xi^2|\hat{u}|^2 +a\xi^2|\hat{z}|^2\leq \rho_2\xi^2|\hat{y}|^2+k\xi^2|\hat{v}|^2+k|\xi||\hat{v}|\hat{z}| \\
- m |\xi|^3|\hat{z}|\left|\int_0^{\infty}g(s)\hat{\eta}(s)ds\right|  +\delta \xi^2|\hat{\theta}||\hat{z}|+\rho_1|\xi||\hat{y}||\hat{u}|.
\end{multline*}
Applying Young's inequality, (\ref{eq14}) holds. \\
\end{proof}

\begin{lem}\label{lem7}
Consider the functional
\begin{equation*}
K_3(\xi,t)=-\rho_2Re\left( \xi^2 \overline{\hat{y}}\int_0^{\infty}g(s)\hat{\eta}(s)ds\right).
\end{equation*}
Then, for any $\varepsilon>0$,  the following estimate
\begin{multline}\label{eq10''}
\frac{d}{dt}K_3(\xi,t) +\rho_2b_0(1-\varepsilon)\xi^2|\hat{y}|^2 \leq C(\varepsilon) \xi^2  \int_0^{\infty}g(s)|\hat{\eta}(s)|^2ds +a |\xi|^3 |\hat{z}|\left| \int_0^{\infty}g(s)\hat{\eta}(s)ds\right| \\
+m\xi^4\left| \int_0^{\infty}g(s)\hat{\eta}(s)ds\right|^2 +k\xi^2 |\hat{v}| \left| \int_0^{\infty}g(s)\hat{\eta}(s)ds\right| +\delta |\xi|^3 |\hat{\theta}| \left| \int_0^{\infty}g(s)\hat{\eta}(s)ds\right|.
\end{multline}
holds.
\end{lem}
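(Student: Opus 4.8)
The plan is to reproduce the multiplier computation of Lemma~\ref{lem4} almost verbatim, since the equations governing $\hat{y}$ and $\hat{\eta}$ in the Fourier system, namely \eqref{eq4} and \eqref{eq6}, are formally identical to the corresponding Cattaneo equations \eqref{e4} and \eqref{e7}; the heat flux $\hat{q}$ never enters the derivation of $K_3$. First I would multiply \eqref{eq4} by $-\xi^2 g(s)\overline{\hat{\eta}}(s)$, integrate in $s$ over $(0,\infty)$, and take the real part, producing an identity that couples $Re(\xi^2\hat{y}_t\int_0^\infty g\overline{\hat{\eta}}\,ds)$ with the terms $a\,Re(i\xi^3\hat{z}\int_0^\infty g\overline{\hat{\eta}}\,ds)$, $m\xi^4|\int_0^\infty g\hat{\eta}\,ds|^2$, $k\,Re(\xi^2\hat{v}\int_0^\infty g\overline{\hat{\eta}}\,ds)$ and $\delta\,Re(i\xi^3\hat{\theta}\int_0^\infty g\overline{\hat{\eta}}\,ds)$.

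Next I would multiply \eqref{eq6} by $-\rho_2\xi^2 g(s)\overline{\hat{y}}$, integrate in $s$ and take the real part; using $b_0=\int_0^\infty g(s)\,ds$ from $(H_3)$, this isolates the production term $\rho_2 b_0\xi^2|\hat{y}|^2$ together with the two contributions $-\rho_2 Re(\xi^2\overline{\hat{y}}\int_0^\infty g\hat{\eta}_t\,ds)$ and $-\rho_2 Re(\xi^2\overline{\hat{y}}\int_0^\infty g\hat{\eta}_s\,ds)$. Adding the two identities, the time-derivative terms in $\hat{y}_t$ and $\hat{\eta}_t$ recombine into $\frac{d}{dt}K_3(\xi,t)$, leaving the $\hat{\eta}_s$ term on the right-hand side.

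The one step requiring care is the integration by parts in the history variable $s$: I would write $\int_0^\infty g(s)\hat{\eta}_s(s)\,ds=[g(s)\hat{\eta}(s)]_{s=0}^{s=\infty}-\int_0^\infty g'(s)\hat{\eta}(s)\,ds$ and discard the boundary contribution, invoking $\hat{\eta}(\cdot,0,\cdot)=0$ at $s=0$ and the decay of $g$ (so that $g(s)\hat{\eta}(s)\to 0$ as $s\to\infty$, legitimate in the history space $L^2_g$). This converts the troublesome term into $-\rho_2 Re(\xi^2\overline{\hat{y}}\int_0^\infty g'(s)\hat{\eta}(s)\,ds)$, which I then control by Cauchy--Schwarz and Young's inequality: estimating $|\int_0^\infty g'\hat{\eta}\,ds|^2\le(\int_0^\infty|g'|\,ds)(\int_0^\infty|g'||\hat{\eta}|^2\,ds)$ absorbs $\varepsilon\rho_2 b_0\xi^2|\hat{y}|^2$ into the left-hand side and produces $C(\varepsilon)\xi^2\int_0^\infty|g'(s)||\hat{\eta}(s)|^2\,ds$. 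Finally, hypothesis $(H_2)$ gives $|g'(s)|=-g'(s)\le k_1 g(s)$, so this last integral is bounded by $k_1\int_0^\infty g(s)|\hat{\eta}(s)|^2\,ds$, yielding the first term on the right of \eqref{eq10''}; the remaining cross terms $a|\xi|^3|\hat{z}||\int_0^\infty g\hat{\eta}\,ds|$, $m\xi^4|\int_0^\infty g\hat{\eta}\,ds|^2$, $k\xi^2|\hat{v}||\int_0^\infty g\hat{\eta}\,ds|$ and $\delta|\xi|^3|\hat{\theta}||\int_0^\infty g\hat{\eta}\,ds|$ are carried over unchanged from the first step. The main obstacle is thus purely the justification of the vanishing boundary term and the correct sign bookkeeping of $g'$ through $(H_2)$; the rest is a routine multiplier calculation mirroring Lemma~\ref{lem4}.
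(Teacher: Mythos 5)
Your proposal is correct and coincides with the paper's own argument: the paper proves Lemma~\ref{lem7} by simply invoking the proof of Lemma~\ref{lem4} with the same multipliers $-\xi^2 g(s)\overline{\hat{\eta}}$ on \eqref{eq4} and $-\rho_2\xi^2 g(s)\overline{\hat{y}}$ on \eqref{eq6}, exactly as you do, since $\hat{q}$ never enters. Your explicit handling of the boundary term in the $s$-integration by parts and of the sign of $g'$ via $(H_2)$ is a slightly more careful write-up of the same computation.
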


\begin{proof}
Proceeding as proof of Lemma \eqref{lem4}, we obtain \eqref{eq10''}. Indeed, we have to multiply \eqref{eq4} by $-\xi^2g(s) \overline{\hat{\eta}}$ and \eqref{eq6} by $-\rho_2\xi^2g(s) \overline{\hat{y}}$, next we take the integration with respect to $s$ for the real parts. We omit the details.
\end{proof}

\vglue 0.3cm

\begin{proof}[\textbf{\large Proof of Theorem \ref{teo2}}]
As Theorem \ref{teo1}, we will consider several cases:
\vglue 0.3 cm

\noindent\textbf{I. Case $\boldsymbol{\chi_0=0}$:} Consider the expressions
\[
\zeta_1\xi^2K_1(\xi,t), \quad \zeta_2\dfrac{\xi^2}{1+\xi^2}K_2(\xi,t), \quad \zeta_3K_3(\xi,t),
\]
where $\zeta_1$, $\zeta_2$ and $\zeta_3$  are positive constants to be fixed later. Lemmas \ref{lem2'} and \ref{lem3'} imply that
\begin{multline*}
\frac{d}{dt}\left\lbrace\zeta_1\xi^2 K_1(\xi,t)+\zeta_2\dfrac{\xi^2}{1+\xi^2}K_2(\xi,t) \right\rbrace + k\left[\zeta_1(1-\varepsilon)-C(\varepsilon)\zeta_2\right]\xi^2|\hat{v}|^2 +\rho_1(1-\varepsilon)\zeta_2\dfrac{\xi^4}{1+\xi^2}|\hat{u}|^2 \\
 +a(1-\varepsilon)\zeta_2\dfrac{\xi^4}{1+\xi^2}|\hat{z}|^2
\leq C(\varepsilon,\zeta_1,\zeta_2)\xi^2|\hat{y}|^2 +C(\varepsilon,\zeta_1,\zeta_2)\xi^4|\hat{\theta}|^2 \\
\frac{b_0\tilde{\beta}\rho_1}{k\delta}\zeta_1\xi^4|\hat{\theta}||\hat{u}|+C(\varepsilon,\zeta_1,\zeta_2)\xi^6\left|\int_0^{\infty}g(s)\hat{\eta}(s)ds\right|^2.
\end{multline*}
By Young's inequality,
\begin{multline}\label{eq24}
\frac{d}{dt}\left\lbrace\zeta_1\xi^2 K_1(\xi,t)+\zeta_2\dfrac{\xi^2}{1+\xi^2}K_2(\xi,t) \right\rbrace + k\left[\zeta_1(1-\varepsilon)-C(\varepsilon)\zeta_2\right]\xi^2|\hat{v}|^2 +\rho_1(1-2\varepsilon)\zeta_2\dfrac{\xi^4}{1+\xi^2}|\hat{u}|^2 \\
 +a(1-\varepsilon)\zeta_2\dfrac{\xi^4}{1+\xi^2}|\hat{z}|^2
\leq C(\varepsilon,\zeta_1,\zeta_2)\xi^2|\hat{y}|^2 +C(\varepsilon,\zeta_1,\zeta_2)\xi^4(1+\xi^2)|\hat{\theta}|^2\\
+C(\varepsilon,\zeta_1,\zeta_2)\xi^6\left|\int_0^{\infty}g(s)\hat{\eta}(s)ds\right|^2.
\end{multline}
Further, applying Young's inequality in \eqref{eq10''} in the Lemma \ref{lem7}, it follows that
\begin{multline*}
\frac{d}{dt}\zeta_3K_3(\xi,t) +\rho_2\zeta_3b_0 (1 -\varepsilon)\xi^2|\hat{y}|^2\leq C(\varepsilon,\zeta_3)\xi^2\int_0^{\infty}g(s)|\hat{\eta}(s)|^2ds+ a\zeta_2\varepsilon\frac{\xi^4}{1+\xi^2}|\hat{z}|^2 \\
+ C(\varepsilon,\zeta_2,\zeta_3)\xi^2(1+\xi^2)\left|\int_0^{\infty}g(s)\hat{\eta}(s)ds\right|^2 +\zeta_3m\xi^4\left|\int_0^{\infty}g(s)\hat{\eta}(s)ds\right|^2+k\zeta_1\varepsilon\xi^2|\hat{v}|^2 \\
+C(\varepsilon,\zeta_1,\zeta_3)\xi^2\left|\int_0^{\infty}g(s)\hat{\eta}(s)ds\right|^2+ \zeta_3\delta |\xi|^3|\hat{\theta}|\left|\int_0^{\infty}g(s)\hat{\eta}(s)ds\right|.
\end{multline*}
Thus,
\begin{multline}\label{eq25}
\frac{d}{dt}\zeta_3K_3(\xi,t) +\rho_2\zeta_3b_0 (1 -\varepsilon)\xi^2|\hat{y}|^2\leq C(\varepsilon,\zeta_3)\xi^2\int_0^{\infty}g(s)|\hat{\eta}(s)|^2ds+ a\zeta_2\varepsilon\frac{\xi^4}{1+\xi^2}|\hat{z}|^2 +k\zeta_1\varepsilon\xi^2|\hat{v}|^2\\
+ C(\varepsilon,\zeta_1,\zeta_2,\zeta_3)\xi^2(1+\xi^2)\left|\int_0^{\infty}g(s)\hat{\eta}(s)ds\right|^2 + C(\varepsilon)\xi^4|\hat{\theta}|^2.
\end{multline}
Computing \eqref{eq24} $+$ \eqref{eq25}, we obtain
\begin{multline*}
\frac{d}{dt}\left\lbrace \zeta_1\xi^2 K_1(\xi,t)+\zeta_2\dfrac{\xi^2}{1+\xi^2}K_2(\xi,t)+\zeta_3K_3(\xi,t) \right\rbrace + k\left[\zeta_1(1-2\varepsilon)-C(\varepsilon)\zeta_2\right]\xi^2|\hat{v}|^2\\
 +\rho_1(1-2\varepsilon)\zeta_2\dfrac{\xi^4}{1+\xi^2}|\hat{u}|^2
+a(1-2\varepsilon)\zeta_2\dfrac{\xi^4}{1+\xi^2}|\hat{z}|^2 +\rho_2\left[\zeta_3b_0(1 -\varepsilon)-C(\varepsilon,\zeta_1,\zeta_2)\right]\xi^2|\hat{y}|^2 \\
\leq C(\varepsilon,\zeta_3)\xi^2\int_0^{\infty}g(s)|\hat{\eta}(s)|^2ds +C(\varepsilon,\zeta_1,\zeta_2,\zeta_3)\xi^2(1+\xi^2)^2\left|\int_0^{\infty}g(s)\hat{\eta}(s)ds\right|^2 \\
 + C(\varepsilon,\zeta_1,\zeta_2,\zeta_3)\xi^2(1+\xi^2)^2|\hat{\theta}|^2.
\end{multline*}
From inequality \eqref{ineq}, we conclude that
\begin{multline}\label{eq26}
\frac{d}{dt}\left\lbrace \zeta_1\xi^2 K_1(\xi,t)+\zeta_2\dfrac{\xi^2}{1+\xi^2}K_2(\xi,t)+\zeta_3K_3(\xi,t) \right\rbrace + k\left[\zeta_1(1-2\varepsilon)-C(\varepsilon)\zeta_2\right]\xi^2|\hat{v}|^2\\
 +\rho_1(1-2\varepsilon)\zeta_2\dfrac{\xi^4}{1+\xi^2}|\hat{u}|^2
+a(1-2\varepsilon)\zeta_2\dfrac{\xi^4}{1+\xi^2}|\hat{z}|^2 +\rho_2\left[\zeta_3b_0(1 -\varepsilon)-C(\varepsilon,\zeta_1,\zeta_2)\right]\xi^2|\hat{y}|^2 \\
\leq C(\varepsilon,\zeta_1,\zeta_2,\zeta_3)(1+b_0)\xi^2(1+\xi^2)^2 \int_0^{\infty}g(s)|\hat{\eta}(s)|^2ds + C(\varepsilon,\zeta_1,\zeta_2,\zeta_3)\xi^2(1+\xi^2)^2|\hat{\theta}|^2.
\end{multline}

%Now, consider the following functional
%\begin{equation*}
%\Q_1(\xi,t)=\frac{\xi^2}{1+\xi^2}\left\lbrace \zeta_1\xi^2 K_1(\xi,t)+\zeta_2\dfrac{\xi^2}{1+\xi^2}K_2(\xi,t)+\zeta_3K_3(\xi,t) \right\rbrace.
%\end{equation*}
%From  \eqref{eq26}, we have
%\begin{multline}\label{eq27}
%\frac{d}{dt}\Q_1(\xi,t) + k\left[\zeta_1(1-2\varepsilon)-C(\varepsilon)\zeta_2\right]\frac{\xi^4}{1+\xi^2}|\hat{v}|^2 +\rho_1(1-2\varepsilon)\zeta_2\dfrac{\xi^6}{(1+\xi^2)^2}|\hat{u}|^2  \\+a(1-2\varepsilon)\zeta_2\dfrac{\xi^6}{(1+\xi^2)^2}|\hat{z}|^2 +\rho_2\left[\zeta_3\delta(1 -\varepsilon)-C(\varepsilon,\zeta_1,\zeta_2)\right]\frac{\xi^4}{1+\xi^2}|\hat{y}|^2 \\
%\leq C(\varepsilon,\zeta_1,\zeta_2,\zeta_3)\xi^6\left|\int_0^{\infty}g(s)\hat{\eta}(s)ds\right|^2
% + C(\varepsilon,\zeta_1,\zeta_2,\zeta_3)(1+\xi^2+\xi^4)\xi^2|\hat{\theta}|^2.
%\end{multline}
%In the last estimate, we have used the trivial inequality,
%\begin{equation}\label{trivial1}
%\frac{\xi^2}{1+\xi^2}\leq 1.
%\end{equation}
In order to make all coefficients in the right-hand side in \eqref{eq26} positive, we have to choose appropriate constant $\zeta_i$. First, let us fix $\varepsilon$, such that $\varepsilon< \frac12.$ Thus, we can take any $\zeta_2 >0$ and
\begin{equation}\label{constant2}
\zeta_1 >\frac{C(\varepsilon)\zeta_2}{1-2\varepsilon}, \quad \zeta_3 >\frac{C(\varepsilon,\zeta_1,\zeta_2)}{b_0(1-\varepsilon)}.
\end{equation}
Then, from \eqref{constant2} and the estimate $\frac{\xi^2}{1+\xi^2}\leq 1$, we can deduce the existence of a positive constant $M_1$ such that
\begin{multline}\label{eq28}
\frac{d}{dt}\Q_1(\xi,t) \leq -M_1 \frac{\xi^4}{1+\xi^2}\left\lbrace k|\hat{v}|^2 +\rho_1|\hat{u}|^2
+a|\hat{z}|^2 +\rho_2|\hat{y}|^2\right\rbrace \\ +C_1(1+b_0)(1+\xi^2)^2\xi^2\int_0^{\infty}g(s)|\hat{\eta}(s)|^2ds
 +C_1(1+\xi^2)^2\xi^2|\hat{\theta}|^2,
\end{multline}
where
\begin{equation*}
\Q_1(\xi,t)= \zeta_1\xi^2 K_1(\xi,t)+\zeta_2\dfrac{\xi^2}{1+\xi^2}K_2(\xi,t)+\zeta_3K_3(\xi,t)
\end{equation*}
and  $C_1$ is a positive constant that depends of $\varepsilon$ and $\zeta_j$ for $j=1,2,3$. Finally, we define the following Lyapunov functional:
\begin{equation*}
\Q(\xi,t) = \Q_1(\xi,t,t)+N(1+\xi^2)^2\hat{\E}(\xi,t),
\end{equation*}
where $N$ is a positive constant to be fixed later. Note that the definition of $\Q_1$ together with \eqref{ineq} imply that
\begin{align*}
\left|\Q_1(\xi,t)\right| &\leq M_2 \left\lbrace \xi^2|K_1(\xi,t)|+|K_2(\xi,t)|+|K_3(\xi,t)|\right\rbrace \\
& \leq M_2 (1+\xi^2)\hat{\E}(\xi,t).
\end{align*}
Hence, we obtain
\begin{equation}\label{eq29+}
(N-M_2)(1+\xi^2)^2\hat{\E}(\xi,t) \leq \Q(\xi,t)\leq (N+M_2)(1+\xi^2)^2 \hat{\E}(\xi,t).
\end{equation}
On the other hand, taking the derivative of $\Q$ with respect to $t$ and using the estimates \eqref{eq28} and Lemma \ref{lem1'}, it follows that
\begin{multline*}
\frac{d}{dt}\Q(\xi,t) \leq -M_1 \frac{\xi^4}{1+\xi^2}\left\lbrace k|\hat{v}|^2 +\rho_1|\hat{u}|^2
+a|\hat{z}|^2 + \rho_2|\hat{y}|^2\right\rbrace   -\left(2N\tilde{\beta}-C_1\right)(1+\xi^2)^2\xi^2|\hat{\theta}|^2 \\
-\left( Nk_1m - C_1(1+ b_0)\right)(1+\xi^2)^2\xi^2\int_0^{\infty}g(s)|\hat{\eta}(s)|^2ds.
\end{multline*}
Now, choosing $N$ such that $N\geq \max\left\lbrace M_2,\dfrac{C_1}{2\tilde{\beta}}, \dfrac{C_1(1+b_0)}{k_1 m}\right\rbrace$ and using the inequalities $(1+\xi^2)^2 \geq \dfrac{\xi^4}{1+\xi^2}$ and $(1+\xi^2)^2 \geq \dfrac{\xi^2}{1+\xi^2}$, there exists  a positive constant $M_3$ such that
\begin{equation*}
\frac{d}{dt}\Q(\xi,t) \leq - M_3\frac{\xi^4}{1+\xi^2} \hat{\E}(\xi,t).
\end{equation*}
Estimate $(\ref{eq29+})$ implies that
\begin{equation*}
\frac{d}{dt}\Q(\xi,t) \leq - \Gamma\frac{\xi^4}{(1+\xi^2)^3} \Q(\xi,t)
\end{equation*}
where $\Gamma=\dfrac{M_3}{(N+M_2)}$. By Gronwall's inequality, it follows that
\begin{equation*}
\Q(\xi,t) \leq  e^{-\Gamma \rho(\xi)t}\Q(\xi,0), \quad \rho(\xi)=\frac{\xi^4}{(1+\xi^2)^3}.
\end{equation*}
Again by using $(\ref{eq29+})$, we have that
\begin{align*}
\hat{\E}(\xi,t) \leq C  e^{-\Gamma \rho(\xi)t}\hat{\E}(\xi,0), \quad \text{where}\quad C= \frac{N+M_2}{N-M_2}> 0.
\end{align*}
\vglue 0.3 cm
\noindent\textbf{II. Case $\boldsymbol{\chi_0 \neq 0}$}: Similar to previous case, we introduce positive constants $\kappa_1$, $\kappa_2$ and $\kappa_3$ that will be fixed later. Next, we estimate the following term by applying Young's inequality,
\begin{align*}
\begin{cases}
\left| \chi_0 Re(i\xi\hat{u}\overline{\hat{y}})\right| \leq \dfrac{\rho_1 \kappa_2\varepsilon}{2\kappa_1} \dfrac{\xi^2}{1+\xi^2}|\hat{u}|^2+C(\varepsilon,\kappa_1,\kappa_2)(1+\xi^2)|\hat{y}|^2, \\
\\
\dfrac{b_0\beta\rho_1}{k \delta } \xi^2|\hat{\theta}||\hat{u}|\leq \dfrac{\rho_1 \kappa_2\varepsilon}{2\kappa_1} \dfrac{\xi^2}{1+\xi^2}|\hat{u}|^2+C(\varepsilon,\kappa_1,\kappa_2)(1+\xi^2)\xi^2|\hat{\theta}|^2,
\end{cases}
\end{align*}
Hence, \eqref{eq10'} can be written as
\begin{multline}\label{eq31}
\frac{d}{dt}K_1(\xi,t) + k(1-\varepsilon)|\hat{v}|^2 \leq  \frac{\rho_1 \kappa_2\varepsilon}{\kappa_1} \frac{\xi^2}{1+\xi^2}|\hat{u}|^2+C(\varepsilon,\kappa_1,\kappa_2)(1+\xi^2)|\hat{y}|^2  \\
+C(\varepsilon, \kappa_1,\kappa_2)(1+\xi^2)\xi^2|\hat{\theta}|^2+C(\varepsilon)\xi^4\left|\int_0^{\infty}g(s)\hat{\eta}(s)ds\right|^2.
\end{multline}
Thus, inequalities \eqref{eq14} and \eqref{eq31} imply that
\begin{multline}\label{eq32}
\frac{d}{dt}\left\lbrace\kappa _1\frac{\xi^2}{1+\xi^2} K_1(\xi,t)+\kappa _2\dfrac{\xi^2}{(1+\xi^2)^2}K_2(\xi,t) \right\rbrace + k\left[\kappa _1(1-\varepsilon)-C(\varepsilon)\kappa _2\right]\frac{\xi^2}{1+\xi^2}|\hat{v}|^2 \\
+\rho_1(1-2\varepsilon)\kappa _2\dfrac{\xi^4}{(1+\xi^2)^2}|\hat{u}|^2
 +a(1-\varepsilon)\kappa _2\dfrac{\xi^4}{(1+\xi^2)^2}|\hat{z}|^2 \\
\leq C(\varepsilon,\kappa _1,\kappa _2)\xi^2|\hat{y}|^2
 +C(\varepsilon,\kappa _1,\kappa _2)\xi^4|\hat{\theta}|^2
+C(\varepsilon, \kappa _1,\kappa _2)\xi^4\left|\int_0^{\infty}g(s)\hat{\eta}(s)ds\right|^2
\end{multline}
Furthermore, applying Young's inequality in \eqref{eq10''} in the Lemma \ref{lem7}, it follows that
\begin{multline*}
\frac{d}{dt}\kappa_3K_3(\xi,t) +\rho_2\kappa_3b_0 (1 -\varepsilon)\xi^2|\hat{y}|^2\leq C(\varepsilon,\kappa_3)\xi^2\int_0^{\infty}g(s)|\hat{\eta}(s)|^2ds+ a\kappa_2\varepsilon\frac{\xi^4}{(1+\xi^2)^2}|\hat{z}|^2 \\
+ C(\varepsilon,\kappa_2,\kappa_3)\xi^2(1+\xi^2)^2\left|\int_0^{\infty}g(s)\hat{\eta}(s)ds\right|^2 +\kappa_3m\xi^4\left|\int_0^{\infty}g(s)\hat{\eta}(s)ds\right|^2+k\kappa_1\varepsilon\frac{\xi^2}{1+\xi^2}|\hat{v}|^2 \\
+C(\varepsilon,\kappa_1,\kappa_3)\xi^2(1+\xi^2)\left|\int_0^{\infty}g(s)\hat{\eta}(s)ds\right|^2+ \kappa_3\delta |\xi|^3|\hat{\theta}|\left|\int_0^{\infty}g(s)\hat{\eta}(s)ds\right|.
\end{multline*}
Thus,
\begin{multline}\label{eq33}
\frac{d}{dt}\kappa_3K_3(\xi,t) +\rho_2\kappa_3b_0 (1 -\varepsilon)\xi^2|\hat{y}|^2\leq C(\varepsilon,\kappa_3)\xi^2\int_0^{\infty}g(s)|\hat{\eta}(s)|^2ds+ a\kappa_2\varepsilon\frac{\xi^4}{(1+\xi^2)^2}|\hat{z}|^2 +k\kappa_1\varepsilon\frac{\xi^2}{1+\xi^2}|\hat{v}|^2\\
+ C(\varepsilon,\kappa_1,\kappa_2,\kappa_3)\xi^2(1+\xi^2)^2\left|\int_0^{\infty}g(s)\hat{\eta}(s)ds\right|^2 + C(\varepsilon)\xi^4|\hat{\theta}|^2
\end{multline}
Computing \eqref{eq32} $+$ \eqref{eq33}, we obtain
\begin{multline*}
\frac{d}{dt}\left\lbrace\kappa _1\frac{\xi^2}{1+\xi^2} K_1(\xi,t)+\kappa _2\dfrac{\xi^2}{(1+\xi^2)^2}K_2(\xi,t) + \kappa _3K_3(\xi,t) \right\rbrace + k\left[\kappa _1(1-2\varepsilon)-C(\varepsilon)\kappa _2\right]\frac{\xi^2}{1+\xi^2}|\hat{v}|^2 \\
 +\rho_1(1-2\varepsilon)\kappa _2\dfrac{\xi^4}{(1+\xi^2)^2}|\hat{u}|^2  +a(1-2\varepsilon)\kappa _2\dfrac{\xi^4}{(1+\xi^2)^2}|\hat{z}|^2+\rho_2\left[\kappa _3b_0(1 -\varepsilon)- C(\varepsilon,\kappa _1,\kappa _2)\right]\xi^2|\hat{y}|^2 \\
\leq C(\varepsilon,\kappa_3)\xi^2\int_0^{\infty}g(s)|\hat{\eta}(s)|^2ds+ C(\varepsilon,\kappa _1,\kappa _2,\kappa _3)\xi^4|\hat{\theta}|^2
+ C(\varepsilon, \kappa _1,\kappa _2,\kappa _3)\xi^2(1+\xi^2)^2\left|\int_0^{\infty}g(s)\hat{\eta}(s)ds\right|^2.
\end{multline*}
From inequality \eqref{ineq}, we conclude that
\begin{multline}\label{eq34'}
\frac{d}{dt}\left\lbrace\kappa _1\frac{\xi^2}{1+\xi^2} K_1(\xi,t)+\kappa _2\dfrac{\xi^2}{(1+\xi^2)^2}K_2(\xi,t) + \kappa _3K_3(\xi,t) \right\rbrace + k\left[\kappa _1(1-2\varepsilon)-C(\varepsilon)\kappa _2\right]\frac{\xi^2}{1+\xi^2}|\hat{v}|^2 \\
 +\rho_1(1-2\varepsilon)\kappa _2\dfrac{\xi^4}{(1+\xi^2)^2}|\hat{u}|^2  +a(1-2\varepsilon)\kappa _2\dfrac{\xi^4}{(1+\xi^2)^2}|\hat{z}|^2+\rho_2\left[\kappa _3b_0(1 -\varepsilon)- C(\varepsilon,\kappa _1,\kappa _2)\right]\xi^2|\hat{y}|^2 \\
\leq  C(\varepsilon, \kappa _1,\kappa _2,\kappa _3)(1+b_0)\xi^2(1+\xi^2)^2\int_0^{\infty}g(s)|\hat{\eta}(s)|^2ds+ C(\varepsilon,\kappa _1,\kappa _2,\kappa _3)\xi^2(1+\xi^2)^2|\hat{\theta}|^2.
\end{multline}
%Now, consider the following functional
%\begin{equation*}
%\Q_1(\xi,t)=\frac{\xi^2}{1+\xi^2}\left\lbrace \kappa _1\xi^2 K_1(\xi,t)+\kappa _2\dfrac{\xi^2}{1+\xi^2}K_2(\xi,t)+\kappa _3K_3(\xi,t) \right\rbrace.
%\end{equation*}
%From  \eqref{eq34'}, we have
%\begin{multline}\label{eq34}
%\frac{d}{dt}\Q_1(\xi,t) + k\left[\kappa _1(1-2\varepsilon)-C(\varepsilon)\kappa _2\right]\frac{\xi^4}{(1+\xi^2)^2}|\hat{v}|^2 +\rho_1(1-2\varepsilon)\kappa _2\dfrac{\xi^6}{(1+\xi^2)^3}|\hat{u}|^2  \\+a(1-2\varepsilon)\kappa _2\dfrac{\xi^6}{(1+\xi^2)^3}|\hat{z}|^2 +\rho_2\left[\kappa _3\delta(1 -\varepsilon)-C(\varepsilon,\kappa _1,\kappa _2)\right]\frac{\xi^4}{1+\xi^2}|\hat{y}|^2 \\
%\leq  C(\varepsilon,\kappa _1,\kappa _2,\kappa _3)(1+\xi^2)\xi^2|\hat{\theta}|^2+C(\varepsilon,\kappa _1,\kappa _2,\kappa _3)(1+\xi^2)\xi^2\left|\int_0^{\infty}g(s)\hat{\eta}(s)ds\right|^2.
%\end{multline}
In order to make all coefficients in the right-hand side in \eqref{eq34'} positive, we have to choose appropriate constant $\kappa _i$. First, let us fix $\varepsilon$, such that $\varepsilon< \frac12.$ Thus, we can take $\kappa_2>0$ and
\begin{equation}\label{constant3}
\kappa _1 >\frac{C(\varepsilon)\kappa _2}{1-2\varepsilon}, \quad \kappa _3 >\frac{C(\varepsilon,\kappa _1,\kappa _2)}{b_0(1-\varepsilon)}.
\end{equation}
Then, from \eqref{constant3} and the estimate $\frac{\xi^2}{1+\xi^2}\leq 1$, we can deduce the existence of a positive constant $M_1$ such that
\begin{multline}\label{eq35}
\frac{d}{dt}\Q_1(\xi,t) \leq -M_1 \frac{\xi^4}{(1+\xi^2)^2}\left\lbrace k|\hat{v}|^2 +\rho_1|\hat{u}|^2
+a|\hat{z}|^2 +\rho_2|\hat{y}|^2\right\rbrace \\
+ C_1(1+b_0)(1+\xi^2)^2\xi^2\int_0^{\infty}g(s)|\hat{\eta}(s)|^2ds+C_1(1+\xi^2)^2\xi^2|\hat{\theta}|^2,
\end{multline}
where
\begin{equation*}
\Q_1(\xi,t)= \kappa _1\frac{\xi^2}{1+\xi^2} K_1(\xi,t)+\kappa _2\dfrac{\xi^2}{(1+\xi^2)^2}K_2(\xi,t)+\kappa _3K_3(\xi,t)
\end{equation*}
and $C_1$ is a positive constant that depends of $\varepsilon$ and $\kappa _j$ for $j=1,2,3$. Finally, we define the following Lyapunov functional:
\begin{equation*}
\Q(\xi,t) = \Q_1(\xi,t,t)+N(1+\xi^2)^2\hat{\E}(\xi,t),
\end{equation*}
where $N$ is a positive constant to be fixed later. Note that the definition of $\Q_1$ together with the inequality \eqref{ineq} imply that
\begin{align*}
\left|\Q_1(\xi,t)\right| & \leq M_2 (1+\xi^2)^2\hat{\E}(\xi,t)
\end{align*}
for some positive constant $M_2$. Hence, we obtain
\begin{equation}\label{eq36}
(N-M_2)(1+\xi^2)^2\hat{\E}(\xi,t) \leq \Q(\xi,t)\leq (N+M_2)(1+\xi^2)^2 \hat{\E}(\xi,t).
\end{equation}
On the other hand, taking the derivative of $\Q$ with respect to $t$ and using the estimates \eqref{eq35} and Lemma \ref{lem1'}, it follows that
\begin{multline*}
\frac{d}{dt}\Q(\xi,t) \leq -M_1 \frac{\xi^4}{(1+\xi^2)^2}\left\lbrace k|\hat{v}|^2 +\rho_1|\hat{u}|^2
+a|\hat{z}|^2 + \rho_2|\hat{y}|^2\right\rbrace   -\left(2N\tilde{\beta}-C_1\right)(1+\xi^2)^2\xi^2|\hat{\theta}|^2 \\
 -\left( Nk_1m - C_1 (1+b_0)\right)(1+\xi^2)^2\xi^2\int_0^{\infty}g(s)|\hat{\eta}(s)|^2ds.
\end{multline*}
Now, choosing $N$ such that $N\geq \max\left\lbrace M_2,\dfrac{C_1}{2\tilde{\beta}}, \dfrac{C_1(1+b_0)}{k_1 m}\right\rbrace$ and noting that  $(1+\xi^2)^2 \geq \dfrac{\xi^4}{(1+\xi^2)^2}$ and $(1+\xi^2)^2 \geq \dfrac{\xi^2}{(1+\xi^2)^2}$, there exists  a positive constant $M_3$ such that
\begin{equation*}
\frac{d}{dt}\Q(\xi,t) \leq - M_3\frac{\xi^4}{(1+\xi^2)^2} \hat{\E}(\xi,t).
\end{equation*}
Estimate $(\ref{eq36})$ implies that
\begin{equation*}
\frac{d}{dt}\Q(\xi,t) \leq - \Gamma \frac{\xi^4}{(1+\xi^2)^4} \Q(\xi,t)
\end{equation*}
where $\Gamma =\dfrac{M_3}{(N+M_2)}$. By Gronwall's inequality, it follows that
\begin{equation*}
\Q(\xi,t) \leq  e^{-\Gamma  \rho(\xi)t}\Q(\xi,0), \quad \rho(\xi)=\frac{\xi^4}{(1+\xi^2)^4},
\end{equation*}
again by using $(\ref{e29})$, we have that
\begin{align*}
\hat{\E}(\xi,t) \leq C  e^{-\Gamma  \rho(\xi)t}\hat{\E}(\xi,0), \quad \text{where}\quad C= \frac{N+M_2}{N-M_2}> 0.
\end{align*}
\end{proof}

\section{The decay estimates}\label{section4}

In this section, we establish the decay rates of solutions $U(x, t)$, $V(x, t)$ of systems \eqref{system2}-\eqref{system20} and \eqref{system4}-\eqref{system40}, respectively. By using the energy inequalities in the Fourier space, we show that the decay rates depend of condition $\chi_{0,\tau}=0$ or $\chi_{0,\tau}\neq 0$ (resp.  $\chi_{0}=0$ or $\chi_{0}\neq 0$). In any case, the regularity loss phenomenon is present. This first main result reads as follows:
\begin{thm}\label{teocattaneo}
Let $s$ be a nonnegative integer and
\begin{equation}\label{propagation}
\chi_{0,\tau}=\left( \tau-\frac{\rho_1}{\rho_3 k}\right)\left( \rho_2 -\frac{b \rho_1}{k}\right) - \frac{\tau \rho_1 \delta^2}{\rho_3 k}.
\end{equation}
Suppose that $U_0 \in \mathbb{H}^s(\R)\cap \mathbb{L}^1(\R)$, where
$$
\mathbb{H}^s(\R):=\left[H^s(\R)\right]^6\times L^2_g(\R;H^{1+s}(\R))\qquad and \qquad \mathbb{L}^1(\R):=\left[L^1(\R)\right]^6\times L^2_g(\R;W^{1,1}(\R)).
$$
Then, the solution $U$ of the system \eqref{system2}, satisfies the following decay estimates,
\begin{enumerate}
\item[(i)] If $\chi_{0,\tau} =0$, then
\begin{align}\label{e38}
\|\partial^k_xU(t)\|_2\leq C(1+t)^{-\frac{1}{8}-\frac{k}{4}}\|U_0\|_1 + C (1+t)^{-\frac{l}{2}}\|\partial_x^{k+l}U_0\|_2, \quad t\geq 0.
\end{align}
\item[(ii)] If $\chi_{0,\tau}\neq 0$, then
\begin{align}\label{e39}
\|\partial^k_xU(t)\|_2\leq C(1+t)^{-\frac{1}{8}-\frac{k}{4}}\|U_0\|_1 + C(1+t)^{-\frac{l}{4}}\|\partial_x^{k+l}U_0\|_2,\quad t\geq 0.
\end{align}
\end{enumerate}
where $k+l \leq s$, $C$ and $c$ are two positive constants.
\end{thm}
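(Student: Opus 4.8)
The plan is to convert the frequency-localized decay of Theorem \ref{teo1} into decay of the physical-space norms via Plancherel's theorem. First I would observe that the energy density $\hat{E}(\xi,t)$ in \eqref{energy1} is equivalent, with constants independent of $\xi$, to the full squared modulus $|\hat{U}(\xi,t)|^2$ (the history slot $\hat{\eta}$ being measured in its natural $\xi^2$-weighted norm, matching the term $m\xi^2\int_0^\infty g(s)|\hat{\eta}(s)|^2\,ds$). Consequently Theorem \ref{teo1} gives the pointwise bound $|\hat{U}(\xi,t)|^2 \leq C e^{-\lambda\rho(\xi)t}|\hat{U}_0(\xi)|^2$, and Plancherel's identity yields
\[
\|\partial_x^k U(t)\|_2^2 = \int_\R |\xi|^{2k}|\hat{U}(\xi,t)|^2\,d\xi \leq C\int_\R |\xi|^{2k}e^{-\lambda\rho(\xi)t}|\hat{U}_0(\xi)|^2\,d\xi.
\]
I would then split this integral into the low-frequency region $\{|\xi|\leq 1\}$ and the high-frequency region $\{|\xi|\geq 1\}$, using the two distinct regimes of $\rho$ recorded in \eqref{e23}.

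On the low-frequency region both forms of $\rho$ satisfy $\rho(\xi)\geq c|\xi|^4$, since $(1+\xi^2)^j\leq 2^j$ there. I would use the Hausdorff--Young bound $|\hat{U}_0(\xi)|\leq \|U_0\|_1$ (the history component being controlled in $W^{1,1}$ precisely to absorb the $\xi$-weight) to pull the data out of the integral, reducing matters to the elementary estimate
\[
\int_{|\xi|\leq 1}|\xi|^{2k}e^{-c|\xi|^4 t}\,d\xi \leq C(1+t)^{-\frac{2k+1}{4}},
\]
proved by the scaling $\eta=\xi t^{1/4}$. This contributes $C(1+t)^{-\frac{1}{8}-\frac{k}{4}}\|U_0\|_1$ to the norm after taking square roots, uniformly in both cases (i) and (ii).

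On the high-frequency region the behaviour of $\rho$ separates the two cases: for $|\xi|\geq 1$ one has $(1+\xi^2)^j\leq 2^j|\xi|^{2j}$, hence $\rho(\xi)\geq c|\xi|^{4-2j}$, which is $c|\xi|^{-2}$ when $\chi_{0,\tau}=0$ (so $j=3$) and $c|\xi|^{-4}$ when $\chi_{0,\tau}\neq 0$ (so $j=4$). Writing $|\xi|^{2k}=|\xi|^{-2l}|\xi|^{2(k+l)}$, I would absorb $|\xi|^{2(k+l)}|\hat{U}_0(\xi)|^2$ into $\|\partial_x^{k+l}U_0\|_2^2$ and bound the remaining supremum by the elementary lemma
\[
\sup_{|\xi|\geq 1}|\xi|^{-2l}e^{-c|\xi|^{-\sigma}t}\leq C(1+t)^{-\frac{2l}{\sigma}},\qquad \sigma>0,
\]
which follows from maximizing $r^{2l/\sigma}e^{-crt}$ over $r=|\xi|^{-\sigma}\in(0,1]$. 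Taking $\sigma=2$ in case (i) and $\sigma=4$ in case (ii) gives the squared high-frequency contributions $(1+t)^{-l}\|\partial_x^{k+l}U_0\|_2^2$ and $(1+t)^{-l/2}\|\partial_x^{k+l}U_0\|_2^2$, hence the terms $C(1+t)^{-l/2}\|\partial_x^{k+l}U_0\|_2$ and $C(1+t)^{-l/4}\|\partial_x^{k+l}U_0\|_2$ in \eqref{e38} and \eqref{e39} respectively. The hypothesis $k+l\leq s$ ensures $\|\partial_x^{k+l}U_0\|_2\leq \|U_0\|_{\mathbb{H}^s}<\infty$.

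The main obstacle is not the routine low-frequency analysis but the clean derivation of the supremum lemma together with the verification that $\rho$ genuinely decays only like $|\xi|^{-2}$ or $|\xi|^{-4}$ at infinity: it is exactly this weak high-frequency dissipation that forces the loss of $l$ derivatives on the initial data and constitutes the quantitative signature of the regularity-loss phenomenon. A secondary point requiring care is confirming the equivalence $\hat{E}(\xi,t)\simeq|\hat{U}(\xi,t)|^2$ with $\xi$-uniform constants, so that the estimate of Theorem \ref{teo1} transfers to $\hat{U}$ without deterioration, and that the $W^{1,1}$ control of the history datum supplies the correct $\xi$-weighted $L^\infty$ bound in the low-frequency step.
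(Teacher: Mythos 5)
Your proposal is correct and follows essentially the same route as the paper's own proof: Plancherel plus the $\xi$-uniform equivalence $\hat{E}(\xi,t)\simeq|\hat{U}(\xi,t)|^2$, the low/high frequency split at $|\xi|=1$, the lower bounds $\rho(\xi)\gtrsim\xi^4$ (low) and $\rho(\xi)\gtrsim\xi^{-2}$ or $\xi^{-4}$ (high), the $L^\infty$ bound $|\hat{U}_0|\leq\|U_0\|_1$ for the low-frequency integral, and the supremum lemma producing the $(1+t)^{-l/2}$ and $(1+t)^{-l/4}$ regularity-loss terms. No gaps; the constants and exponents all match.
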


\begin{proof}
Applying the Plancherel's identity, we have
\begin{align*}
\|\partial^k_xU(t)\|_2^2 = \|(i\xi)^k\hat{U}(t)\|_2^2 = \int_{\R}|\xi|^{2k}\left|\hat{U}(\xi,t)\right|^2 d\xi.
\end{align*}
It is easy to see that
\begin{align}\label{e40}
c_1\left|\hat{U}(\xi,t)\right|^2 \leq \hat{E}(\xi,t) \leq c_2\left|\hat{U}(\xi,t)\right|^2,
\end{align}
for some positive constant $c_1$ and $c_2$. Thus, it follows that
\begin{align*}
\|\partial^k_xU(t)\|_2^2 \leq \frac{1}{c_1} \int_{\R}|\xi|^{2k}\hat{E}(\xi,t) d\xi.
\end{align*}
From Theorems \ref{teo1} and \eqref{e40}, there exist a postie constant $M>0$, such that
\begin{align*}
\|\partial^k_xU(t)\|_2^2 &\leq M\int_{\R}|\xi|^{2k}e^{-\lambda \rho(\xi)t}\left|\hat{U}(0,\xi)\right|^2 d\xi \\
&\leq M \underbrace{\int_{\left\lbrace|\xi|\leq 1\right\rbrace}|\xi|^{2k}e^{-\lambda \rho(\xi)t}\left|\hat{U}_0(\xi)\right|^2 d\xi}_{I_1} + \underbrace{ M\int_{\left\lbrace|\xi|\geq 1\right\rbrace}|\xi|^{2k}e^{-\lambda \rho(\xi)t}\left|\hat{U}_0(\xi)\right|^2 d\xi}_{I_2}
\end{align*}

\vglue 0.3 cm
\noindent\textbf{Case $\boldsymbol{\chi_{0,\tau} = 0}$:} It is not difficult to see that the function $\rho(\cdot)$  satisfies
\begin{equation}\label{e41}
\left\lbrace\begin{tabular}{l c l}
$\rho(\xi) \geq \frac{1}{8}\xi^4$ & if & $|\xi|\leq 1$, \\
\\
$\rho(\xi) \geq \frac{1}{8}\xi^{-2}$ & if & $|\xi|\geq 1$.
\end{tabular}\right.
\end{equation}
Thus, we estimate $I_1$ as follows,
\begin{equation*}
I_1 \leq M \|\hat{U_0}\|_{L^\infty}^2 \int_{|\xi|\leq 1}|\xi|^{2k}e^{-\frac{\lambda}{8}\xi^4t} d\xi \leq C_1\|\hat{U_0}\|_{L^\infty}^2\left( 1+t\right)^{-\frac{1}{4}(1+2k)} \leq C_1\left( 1+t\right)^{-\frac{1}{4}(1+2k)} \|U_0\|_{L^1}^2.
\end{equation*}
On the other hand, by using the second inequality in (\ref{e41}), we obtain
%\begin{align*}
%I_2 &\leq M\int_{|\xi|\geq 1}|\xi|^{2k}e^{-\frac{\lambda}{8}t}\left|\hat{U_0}(\xi)\right|^2d\xi \leq M e^{-\frac{\lambda}{8}t} \int_{\R}|\xi|^{2k}\left|\hat{U_0}^2(\xi)\right|^2d\xi  \notag \\
%&\leq M  e^{-\frac{\lambda}{8}t} \|\partial_x^{k}U_0\|_2^2.
%\end{align*}
\begin{align*}
I_2 &\leq M\int_{|\xi|\geq 1}|\xi|^{2k}e^{-\frac{\lambda}{8}\xi^{-2}t}\left|\hat{U_0}(\xi)\right|^2d\xi \leq M \sup_{|\xi| \geq 1}\{ |\xi |^{-2l}e^{-\frac{\lambda}{8} \xi ^{-2}t} \} \int_{\R}|\xi|^{2(k+l)}\left|\hat{U_0}^2(\xi)\right|^2d\xi  \notag \\
&\leq C_2 (1+t)^{-l} \|\partial_x^{k+l}U_0\|_2^2.
\end{align*}
Combining the estimates of $I_1$ and $I_2$, we obtain $(\ref{e38})$.

\vglue 0.3 cm
\noindent\textbf{Case $\boldsymbol{\chi_{0,\tau} \neq 0}$:} In this cases, the function $\rho(\cdot)$ satisfies
\begin{equation}\label{e42}
\left\lbrace\begin{tabular}{l c l}
$\rho(\xi) \geq \frac{1}{16}\xi^4$ & if & $|\xi|\leq 1$ \\
\\
$\rho(\xi) \geq \frac{1}{16}\xi^{-4}$ & if & $|\xi|\geq 1$
\end{tabular}\right.
\end{equation}
Thus, we estimate $I_1$ as following,
\begin{equation*}
I_1 \leq M \|\hat{U_0}\|_{L^\infty}^2 \int_{|\xi|\leq 1}|\xi|^{2k}e^{-\frac{\lambda}{16}\xi^4t} d\xi \leq C_1\|\hat{V_0}\|_{L^\infty}^2\left( 1+t\right)^{-\frac{1}{4}(1+2k)} \leq C_1\left( 1+t\right)^{-\frac{1}{4}(1+2k)} \|U_0\|_{L^1}^2.
\end{equation*}
Moreover, by using the second inequality in (\ref{e42}), it follows that
\begin{align*}
I_2 &\leq M\int_{|\xi|\geq 1}|\xi|^{2k}e^{-\frac{\lambda}{16}\xi^{-4}t}\left|\hat{U_0}(\xi)\right|^2d\xi \leq M \sup_{|\xi| \geq 1}\{ |\xi |^{-2l}e^{-\frac{\lambda}{16} \xi ^{-4}t} \} \int_{\R}|\xi|^{2(k+l)}\left|\hat{U_0}^2(\xi)\right|^2d\xi  \notag \\
&\leq C_2 (1+t)^{-\frac{l}{2}} \|\partial_x^{k+l}U_0\|_2^2.
\end{align*}
Combining the estimates of $I_1$ and $I_2$, we obtain $(\ref{e39})$.
\end{proof}
Similar to the proof of Theorem \ref{teocattaneo},  we establish decay estimates of the solution $V(x,t)$ of Timoshenko-Fourier system \eqref{system4}-\eqref{system40}. The proof of next theorem is carried out by the same technique as that of Theorem \ref{teocattaneo}. Therefore, we omit it.
\begin{thm}\label{teofourier}
Let $s$ be a nonnegative integer and
\begin{equation}\label{propagation2}
\chi_{0}=\left( \rho_2 -\frac{b \rho_1}{k}\right).
\end{equation}
Suppose that $V_0 \in \mathbb{H}^s(\R)\cap \mathbb{L}^1(\R)$, where
$$
\mathbb{H}^s(\R):=\left[H^s(\R)\right]^5\times L^2_g(\R;H^{1+s}(\R))\qquad and \qquad \mathbb{L}^1(\R):=\left[L^1(\R)\right]^5\times L^2_g(\R;W^{1,1}(\R)).
$$
Then, the solution $V$ of the system \eqref{system4}, satisfies the following decay estimates,
\begin{enumerate}
\item[(i)] If $\chi_{0} =0$, then
\begin{align}\label{eq38}
\|\partial^k_xV(t)\|_2\leq C_1(1+t)^{-\frac{1}{8}-\frac{k}{4}}\|V_0\|_1 + C_2(1+t)^{-\frac{l}{2}}\|\partial_x^{k+l}V_0\|_2, \quad t\geq 0.
\end{align}
\item[(ii)] If $\chi_{0}\neq 0$, then
\begin{align}\label{eq38'}
\|\partial^k_xV(t)\|_2\leq C_1(1+t)^{-\frac{1}{8}-\frac{k}{4}}\|V_0\|_1 + C_2(1+t)^{-\frac{l}{4}}\|\partial_x^{k+l}V_0\|_2, \quad t\geq 0.
\end{align}
\end{enumerate}
where $k+l \leq s$, $C_1,C_2$ are two positive constants.
\end{thm}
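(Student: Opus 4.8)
The plan is to mirror exactly the proof of Theorem \ref{teocattaneo}, transferring the pointwise frequency estimate of Theorem \ref{teo2} to the physical space through Plancherel's identity. First I would write
\begin{equation*}
\|\partial_x^k V(t)\|_2^2 = \int_\R |\xi|^{2k}\,|\hat V(\xi,t)|^2\,d\xi,
\end{equation*}
and observe that the definition of the energy $\hat{\E}$ in \eqref{energy2}, together with the Cauchy--Schwarz bound \eqref{ineq}, yields the equivalence $c_1|\hat V(\xi,t)|^2 \le \hat{\E}(\xi,t) \le c_2|\hat V(\xi,t)|^2$ for positive constants $c_1,c_2$. Inserting Theorem \ref{teo2} then gives
\begin{equation*}
\|\partial_x^k V(t)\|_2^2 \le M\int_\R |\xi|^{2k} e^{-\lambda\rho(\xi)t}\,|\hat V_0(\xi)|^2\,d\xi,
\end{equation*}
which I would split as $I_1+I_2$ over the low-frequency region $\{|\xi|\le 1\}$ and the high-frequency region $\{|\xi|\ge 1\}$.

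For the low-frequency part, in both cases $\chi_0=0$ and $\chi_0\neq 0$ the function $\rho$ satisfies $\rho(\xi)\ge c\,\xi^4$ on $\{|\xi|\le 1\}$, so bounding $|\hat V_0(\xi)|^2\le \|\hat V_0\|_{L^\infty}^2 \le \|V_0\|_{L^1}^2$ and using the elementary estimate $\int_{|\xi|\le1}|\xi|^{2k}e^{-c\xi^4 t}\,d\xi \le C(1+t)^{-\frac14(1+2k)}$ produces the first term $(1+t)^{-\frac18-\frac{k}{4}}\|V_0\|_1$ after taking square roots. This part is insensitive to whether $\chi_0$ vanishes.

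The high-frequency part is where the two cases diverge and where the \emph{regularity loss} appears. Here I would extract the factor $|\xi|^{-2l}$ to convert $|\xi|^{2k}$ into $|\xi|^{2(k+l)}$, controlled by $\|\partial_x^{k+l}V_0\|_2^2$, and bound the remaining supremum. When $\chi_0=0$ one has $\rho(\xi)\ge c\,\xi^{-2}$, so $\sup_{|\xi|\ge1}|\xi|^{-2l}e^{-c\xi^{-2}t}\le C(1+t)^{-l}$, giving the factor $(1+t)^{-\frac{l}{2}}$ in \eqref{eq38}; when $\chi_0\neq0$ the decay is only $\rho(\xi)\ge c\,\xi^{-4}$, so the supremum obeys $\sup_{|\xi|\ge1}|\xi|^{-2l}e^{-c\xi^{-4}t}\le C(1+t)^{-\frac{l}{2}}$, yielding the weaker factor $(1+t)^{-\frac{l}{4}}$ in \eqref{eq38'}.

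The only genuinely delicate point is this last supremum computation: since the dissipation rate $\rho(\xi)$ tends to zero as $|\xi|\to\infty$, no uniform exponential decay is available in the high frequencies, and one must pay $l$ extra derivatives on the data to recover any algebraic rate. Setting $\eta=|\xi|^{-2}\in(0,1]$ reduces both suprema to maximizing $\eta^{l}e^{-c\eta^{m}t}$ (with $m=1$ or $m=2$) over $\eta\in(0,1]$, whose maximum is attained at $\eta\sim t^{-1/m}$ and is of order $t^{-l/m}$; this directly accounts for the different exponents $-\frac{l}{2}$ and $-\frac{l}{4}$. Since $\rho$ is literally the same function as in Theorem \ref{teocattaneo} and $\hat{\E}$ plays the role of $\hat E$, no new estimates beyond Theorem \ref{teo2} are required, which explains why the Cattaneo and Fourier decay rates coincide.
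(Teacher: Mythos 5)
Your proposal is correct and follows essentially the same route as the paper, which proves Theorem \ref{teocattaneo} by exactly this Plancherel--plus--frequency-splitting argument and then explicitly omits the proof of Theorem \ref{teofourier} as being identical, using the pointwise bound of Theorem \ref{teo2} in place of Theorem \ref{teo1}. Your low-frequency bound $\int_{|\xi|\le 1}|\xi|^{2k}e^{-c\xi^4 t}\,d\xi\le C(1+t)^{-\frac14(1+2k)}$ and the high-frequency suprema $\sup_{|\xi|\ge1}|\xi|^{-2l}e^{-c\xi^{-m\cdot 2}t}$ match the paper's estimates \eqref{e41}--\eqref{e42} and yield the same exponents, so nothing is missing.
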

%%%%%%%%%%%%%%%%%%%%%%%%%%%

\section*{Acknowledgments}
The first author was partially supported by Facultad de Ciencias Exactas y Naturales, Unversidad Nacional de Colombia Sede Manizales, under project number 45511.

%% The Appendices part is started with the command \appendix;
%% appendix sections are then done as normal sections
%% \appendix

%% \section{}
%% \label{}

%% If you have bibdatabase file and want bibtex to generate the
%% bibitems, please use
%%
%%  \bibliographystyle{elsarticle-num} 
%%  \bibliography{<your bibdatabase>}

\begin{thebibliography}{10}

\bibitem{alabau2007}
Alabau-Boussouira, F.
Asymptotic behavior for Timoshenko beams subject to a single nonlinear feedback control.
{\em Nonlinear Differential Equations and Applications} 14, 643–669, 2007.

\bibitem{rivera2003}
Amar-Khodja, F., Benabdallah, A., Mu\~noz Rivera, J.E. and Racke, R.;
Energy decay for Timoshenko systems of memory type,
{\em Journal of Differential Equations}; 194 (1),82-115, 2003.

\bibitem{dafermos1970}
Dafermos, C.M.;
Asymptotic stability in viscoelasticity.
{\em Arch. Ration. Mech. Anal.}, 37, 297--308, 1970.

\bibitem{pata2014}
Dell'Oro, F. and Pata, V.;
On the stability of Timoshenko systems with Gurtin-Pipkin thermal law,
{\em  J. Differential Equations,} 257 (2), 523-548, 2014.

\bibitem{fatori2014}
Fatori, L. H., Monteiro, R. N. and Fern\'andez Sare, H. D.;
The Timoshenko system with history and Cattaneo law,
{\em Applied Mathematics and Computation}, 228, 128-140, 2014.

\bibitem{sare2009}
Fern\'andez Sare, H. D. and Racke, R.; On the stability of damped Timoshenko systems: Cattaneo versus Fourier law,
{\em Arch. Ration. Mech. Anal.} 194 (1),  221-251, 2009.

\bibitem{gallego2017}
Gallego, F. A. and Mu\~noz Rivera, J.E.;
Decay rates for solutions to thermoelastic Bresse systems of types I and III,
{\em Electronic Journal of Differential Equations}, Vol. 2017, No. 73, pp. 1-26, 2017.


\bibitem{messaoudi2009}
Guesmia A. and  Messaoudi, S.A.;
General energy decay estimates of Timoshenko systems with frictional versus viscoelastic damping,
{\em Mathematical Methods in the Applied Sciences} 32 (16), 2102-2122, 2009.

\bibitem{guesmia}
Guesmia, A. and  Messaoudi, S.A.; 
On the stabilization of Timoshenko systems with memory and different speeds of wave propagation,
{\em  Applied Mathematics and Computation} , 219 (17), 9424-9437, 2013.



\bibitem{hosono2006}
Hosono, T. and Kawashima, S.;
Decay property of regularity-loss type and application to some nonlinear hyperbolic-elliptic system.
{\em Math. Mod. Meth. Appl. Sci.}, 16,1839-1859, 2006.


\bibitem{ide2008}
Ide, K., Haramoto, K. and Kawashima, S.;
Decay property of regularity-loss type for dissipative Timoshenko system,
{\em Math. Mod. Meth. Appl. Sci.}, 18(5), 647-667, 2008.

\bibitem{kawashima2008}
Ide, K and Kawashima, S.;
Decay property of regularity-loss type and nonlinear effects for dissipative Timoshenko system,
{\em Math. Mod. Meth. Appl. Sci.}, 18(7), 1001-1025, 2008.


\bibitem{keddi2018}
Keddi, A., Messaoudi, S.A. and  Benaissa, A.;  A general decay result for a memory-type Timoshenko-thermoelasticity system with second sound.
{\em Journal of Mathematical Analysis and Applications}, 456(2), 1261-1289, (2017).


\bibitem{khadersaid2016}
Khader, M. and Said-Houari, B.; Decay rate of solutions to Timoshenko system with past history in unbounded domains.
{\em Applied Mathematics and Optimization},  75, (3), 403-428, 2017.

\bibitem{khadersaid2017}
Khader, M. and Said-Houari, B. On the decay rate of solutions of the Bresse system with Gurtin–Pipkin thermal law,
{\em Asymptotic Analysis}, 103(1-2), 1-32, 2017.

\bibitem{mustafa2009}
Messaoudi, S. A. and Mustafa, M. I.; On the stabilization of the Timoshenko system by a weak nonlinear dissipation,
{\em Mathematical Methods in the Applied Sciences}; 32 (4), 454-469 2009.

\bibitem{messaoudi2004}
Messaoudi, S. A. and Said-Houari, B.; Blow up of solutions with positive energy in nonlinear thermoelasticity with second sound,
{\em Journal of Applied Mathematics}; 3, 201-211, 2004.

\bibitem{said2009}
Messaoudi, S.A. and Said-Houari, B.;
Uniform decay in a Timoshenko-type system with past history.
{\em Journal of Mathematical Analysis and Applications}; 360 (2), 459-475, 2009.

\bibitem{mori2018}
Mori, N.; Dissipative structure and global existence in critical space for Timoshenko system of memory type.
{\em Journal of Differential Equations}, 265(4), 1627-1653, (2018).

\bibitem{rivera20031}
Mu\~noz Rivera, J. E. and Racke, R.;
Global stability for damped Timoshenko systems,
{\em Discrete and Continuous Dynamical Systems}; 9 (6), 1625-1639, 2003.

\bibitem{rivera2008}
Mu\~noz Rivera, J.E. and Racke, R.; Timoshenko systems with indefinite damping.
{\em Journal of Mathematical Analysis and Applications}; 341 (2), 1068-1083, 2008.

\bibitem{sare2008}
Mu\~noz Rivera; J.E and Fern\'andez Sare, H. D.;
Stability of Timoshenko systems with past history,
{\em Journal of Mathematical Analysis and Applications}; 339 (1), 482-502, 2008.

\bibitem{park2011}
Park, J. H. and Kang, J.R.;
Energy decay of solutions for Timoshenko beam with a weak non-linear dissipation,
{\em IMA Journal of Applied Mathematics}; 76 (2), 340-350, 2011.

\bibitem{houari2013}
Racke, R. and Said-Houari, B.;
Decay rates and global existence for semilinear dissipative Timoshenko systems.
{\em Quart. Appl. Math.}, 72(2),  229-1266, 2013.

\bibitem{racke2010}
Racke, R. and Said-Houari, B.; Decay rates and global existence for semilinear dissipative Timoshenko systems.
{\em Konstanzer Schriften in Mathematik,} 274:1-40, 2010.

\bibitem{houari2016}
Said-Houari, B. and Hamadouche, T.;
The asymptotic behavior of the Bresse-Cattaneo system,
{\em Communications in Contemporary Mathematics}, 18(04),  1550045, 2016.


\bibitem{houari2016-2}
Said-Houari, S. and Hamadouche, T.; The Cauchy problem of the Bresse system in thermoelasticity of type III,
{\em Applicable Analysis}, 95(11), 2323-2338, 2016.


\bibitem{houari2012}
Said-Houari, B. and Kasimov, A.; Decay property of Timoshenko system in Thermoelasticity.
{\em  Math. Methods. Appl. Sci.}, 35 (3), 314-333, 2012.

\bibitem{said2013damping}
Said-Houari, B. and Kasimov, A.; Damping by heat conduction in the Timoshenko system: Fourier and Cattaneo are the same.
{\em Journal of Differential Equations}, 255(4):611--632, 2013.


\bibitem{santos2012}
Santos, M. L., Almeida Jr., D. and Mu\~noz Rivera, J. E.; The stability number of the Timoshenko system with second sound,
{\em J. Differ. Equ.} 253, 2715–2733, 2012.

\bibitem{soufyane2003}
Soufyane, A. and Wehbe, A.; Exponential stability for the Timoshenko beam by a locally distributed damping,
{\em Electronic Journal of Differential Equations}; 29, 1-14, 2003.

\bibitem{houari2015}
Said-Houari, B. and Soufyane, A.; The Bresse system in thermoelasticity,
{\em Mathematical Methods in the Applied Sciences}, 38,17 3642-3652, 2015.

\bibitem{houari2014}
Soufyane, A. and Said-Houari, B.; The effect of the wave speeds and the frictional damping terms on the decay rate of the Bresse system,
{\em Evolution Equations and Control Theory}, 3(4), 713-738, 2014.

\bibitem{timoshenko1921}
Timoshenko, S. P.; On the correction for shear of the differential equation for transverse vibrations of prismatic bars,
{\em Philos. Mag.} 41, 744–746, 1921.

\bibitem{timoshenko1922}
Timoshenko, S. P.; On the transverse vibrations of bars of uniform cross-section,
{\em Philos. Mag.} 43, 125–131, 1922.

\bibitem{ueda2011}
Ueda, Y. and Kawashima, S.; Decay property of regularity-loss type for the Euler-Maxwell system.
{\em Methods Appl. Anal.,} 18(3), 245-267, 2011.

\end{thebibliography}

%% else use the following coding to input the bibitems directly in the
%% TeX file.

\section*{References}

\end{document}